\tikzstyle{edge} = [fill,opacity=.5,fill opacity=.5,line cap=round, line join=round, line width=50pt]
\newtheorem{thm}{Theorem}
\newtheorem{lemma}[thm]{Lemma}
\newtheorem{example}[thm]{Example}
\newtheorem{prop}[thm]{Proposition}
\def\hat{\widehat}
\begin{document}
\title{Optimal nonparametric testing of Missing Completely At Random, and its connections to compatibility}
\author{Thomas B. Berrett and Richard J. Samworth\\University of Warwick and University of Cambridge}
\date{\today}

\maketitle

\begin{abstract}
    Given a set of incomplete observations, we study the nonparametric problem of testing whether data are Missing Completely At Random (MCAR).  Our first contribution is to characterise precisely the set of alternatives that can be distinguished from the MCAR null hypothesis.  This reveals interesting and novel links to the theory of Fr\'echet classes (in particular, compatible distributions) and linear programming, that allow us to propose MCAR tests that are consistent against all detectable alternatives.  We define an incompatibility index as a natural measure of ease of detectability, establish its key properties, and show how it can be computed exactly in some cases and bounded in others. Moreover, we prove that our tests can attain the minimax separation rate according to this measure, up to logarithmic factors.  Our methodology does not require any complete cases to be effective, and is available in the \texttt{R} package \texttt{MCARtest}. 
    
\end{abstract}

\section{Introduction}

Over the last century, a plethora of algorithms have been proposed to address specific statistical challenges; in many cases these can be justified under modelling assumptions on the underlying data generating mechanism.  When faced with a data set and a question of interest, the practitioner needs to assess the validity of the assumptions underpinning these statistical models, in order to determine whether or not they can trust the output of the method.  Experienced practitioners recognise that mathematical assumptions can rarely be expected to hold exactly, and develop intuition (sometimes backed up by formal tests) about the seriousness of different violations.

One of the most commonly-encountered discrepancies between real data sets and models hypothesised in theoretical work is that of missing data.  In fact, missingness may be even more serious than many other types of departure from a statistical model, in that it may be impossible even to run a particular algorithm without modification when data are missing.  Once it is accepted that methods for dealing with missing data are essential, the primary concern is to understand the relationship between the data generating and missingness mechanisms.  In the ideal situation, these two sources of randomness are independent, a setting known as Missing Completely At Random (MCAR).  When this assumption holds, the analysis becomes much easier, because we can regard our observed data as a representative sample from the wider population. For instance, theoretical guarantees have recently been established in the MCAR setting for a variety of modern statistical problems, including high-dimensional regression \citep{loh2012high}, high-dimensional or sparse principal component analysis \citep{zhu2019high,elsener2019sparse}, classification \citep{cai2019high}, and precision matrix and changepoint estimation \citep{loh2018high,follain2022high}. The failure of this assumption, on the other hand, may introduce significant bias and necessitate further investigation of the nature of the dependence between the data and the missingness \citep{davison2003statistical,little2019statistical}.

Our aim in this work is to study the fundamental problem of testing the null hypothesis that data are MCAR.  It is important to recognise from the outset that in general there will exist alternatives (i.e.~joint distributions of data and missingness that do not satisfy the MCAR hypothesis) for which no test could have power greater than its size. Indeed, to give a toy example, if $X_1,\ldots,X_n \stackrel{\mathrm{iid}}{\sim} N(0,1)$, but we only observe those $X_i$ that are non-negative, then the joint distribution of our data is indistinguishable from the MCAR setting where $X_1,\ldots,X_n$ are a random sample from the folded normal distribution on $[0,\infty)$, and each $X_i$ is observed independently with probability $1/2$.

The first main contribution of this work, then, is to determine precisely the set of alternatives that are distinguishable from our null hypothesis. Surprisingly, this question turns out to be relevant in several different subject areas, namely copula theory \citep{nelsen2007introduction,dall2012advances}, portfolio risk management \citep{embrechts2010bounds,ruschendorf2013mathematical}, coalition games \citep{vorob1962consistent}, quantum contextuality \citep{bell1966problem,clauser1978bell} and relational databases \citep{maier1983theory}.  To describe our results briefly, we introduce the notation that when a random vector $X$ takes values in a measurable space of the form $\mathcal{X} = \prod_{j=1}^d \mathcal{X}_j$ and when $S \subseteq \{1,\ldots,d\}$, we write $X_S:=(X_j : j \in S)$ and $\mathcal{X}_S := \prod_{j \in S} \mathcal{X}_j$.  Following, e.g.,~\citet[][Section~3]{joe1997multivariate}, given a collection $\mathbb{S}$ of subsets of $\{1,\ldots,d\}$, and a collection of distributions $P_{\mathbb{S}} := (P_S:S \in \mathbb{S})$, we define their \emph{Fr\'echet class} as the set of all distributions of $X$ for which $X_S$ has marginal distribution $P_S$ for all $S \in \mathbb{S}$.  We say that a collection of marginal distributions $P_{\mathbb{S}}$ is \emph{compatible} when the corresponding Fr\'echet class is non-empty.  In Section~\ref{Sec:Frechet}, we prove that it is only possible to detect that a joint distribution does not satisfy the MCAR hypothesis if the marginal distributions for which we have simultaneous observations are incompatible.   

Our second contribution, in Section~\ref{Sec:TestingCompatibility}, is to introduce a  general new test of the null hypothesis of compatibility, and consequently (by our result in Section~\ref{Sec:Frechet}) the MCAR hypothesis, in the discrete case.  We prove that it has finite-sample Type~I error control, and is consistent against all incompatible alternatives.  These results therefore describe precisely what can be learnt about the plausibility of the MCAR hypothesis from data.  Our methodology is based on a duality theorem due to \citet{kellerer1984duality} that gives a characterisation of compatibility, and allows us to define a notion of an \emph{incompatibility index}, denoted $R(P_\mathbb{S})$.  Although the result itself is rather abstract, we show how it motivates a test statistic that can be computed straightforwardly using linear programming.  We further argue that a more specific and involved analysis can lead to improved tests in certain cases.  For instance, when $d=3$, with $\mathbb{S} = \bigl\{\{1,2\},\{2,3\},\{1,3\}\bigr\}$ and $|\mathcal{X}_1| = r$, $|\mathcal{X}_2|=s$ and $|\mathcal{X}_3|=2$, we show by means of a minimax lower bound (Theorem~\ref{Prop:rs2lowerbound}) that our improved test achieves the optimal separation rate in $R(P_\mathbb{S})$ simultaneously in $r$, $s$ and the sample sizes for each observation pattern, up to logarithmic factors.

The form of the incompatibility index is a supremum of a class of linear functionals, and exact expressions can become complicated as $|\mathbb{S}|$ and the alphabet sizes increase.  In Section~\ref{Sec:Computation}, we describe computational geometry algorithms that yield analytic expressions for $R(P_\mathbb{S})$; code is available in the \texttt{R} package \texttt{MCARtest} \citep{berrett2022MCARtest}, and in principle, these can be applied for arbitrary $\mathbb{S}$.  As illustrations, we provide examples with binary variables, where these expressions are more tractable.  Moreover, as we show in Section~\ref{Sec:Reductions}, in some cases we can exploit the structure of $\mathbb{S}$ to reduce the computation of $R(P_\mathbb{S})$ to the computation of the analogous quantity for lower-dimensional settings, or at least to bound it in terms of these simpler quantities.

In Section~\ref{Sec:Continuous}, we explain how the methodology and theory described above extends to continuous data, or to variables having both continuous and discrete components.  Here we have an additional approximation error in the minimax separation radius that depends on the smoothness of the densities of the continuous components.  Section~\ref{Sec:Numerics} is devoted to a numerical study of a Monte Carlo version of our test, which uses bootstrap samples to generate the critical value.  We find that this version also provides good Type I error control, and outperforms a test due to \citet{fuchs1982maximum} even when this latter test is provided with additional complete cases (which are required for its application).  Proofs of all of our results, as well as auxiliary results, are deferred to Section~\ref{Sec:Proofs}.

Our theory is based on the study of marginal polytopes, which is a topical problem in convex geometry  \citep{vlach1986conditions,wainwright2008graphical,deza2009geometry}.  Indeed, these polytopes are known to be extremely complicated \citep{de2014combinatorics}, but are of considerable interest in hierarchical log-linear models \citep{eriksson2006polyhedral}, variational inference \citep{wainwright2003variational}, classical transportation  \citep{kantorovich1942translocation} (reprinted as \citet{kantorovich2006translocation}) and max flow-min cut problems \citep{gale1957theorem}. In the special case where all variables are binary, marginal polytopes are equivalent to \emph{correlation polytopes} or \emph{cut polytopes}, which have been heavily studied in their own right \citep{deza2009geometry,coons2020generalized}. In statistical contexts, recent work on hypothesis testing over polyhedral parameter spaces has sought to elucidate the link between the difficulty of the problem and the underlying geometry \citep{blanchard2018minimax,wei2019geometry}.

Most prior work on testing the MCAR hypothesis has been developed within the context of parametric models such as multivariate normality \citep{little1988test,kim2002tests,jamshidian2010tests}, Poisson or multinomial contingency tables with at least some complete cases \citep{fuchs1982maximum} or generalised estimating equations \citep{chen1999test,qu2002testing}.  \citet{li2015nonparametric} study the nonparametric problem of testing whether or not a family of marginal distributions $P_{\mathbb{S}}$ is \emph{consistent}, i.e.~whether, for each $S,S' \in \mathbb{S}$ with $S \cap S' \neq \emptyset$, the marginal distributions of $P_S$ and $P_{S'}$ on the coordinates in $S \cap S'$ agree with each other. \cite{michel2021pklm} consider an equivalent problem, using random forest classification methods to test equalities of distributions.  Consistency is a necessary, but not sufficient, condition for compatibility\footnote{However, in the special case where $[d] \in \mathbb{S}$, a necessary and sufficient condition for compatibility is that $P_S$ is the marginal distribution on $\mathcal{X}_S$ of $P_{[d]}$, for each $S \in \mathbb{S} \setminus \{[d]\}$.  In other words, in this case, consistency is sufficient for compatibility.  A test of compatibility may therefore then be constructed by testing each of these hypotheses via $|\mathbb{S}|-1$ two-sample tests and applying, e.g., a Bonferroni correction.  More generally, this strategy may be applied whenever $\mathbb{S}$ is \emph{decomposable} \citep{lauritzen1984decomposable,lauritzen1988local}.}.  To the best of our knowledge, the current paper is the first  both to characterise the set of detectable alternatives to the MCAR hypothesis, and to provide tests that have asymptotic power 1 against all such detectable alternatives while controlling the Type I error.

We conclude this introduction with some notation that is used throughout the paper.  For $d \in \mathbb{N}$, we write $[d] := \{1,\ldots,d\}$, and also define $[\infty] := \mathbb{N}$.  Given a countable set~$\Omega$, we write $2^\Omega$ for its power set, and $1_\Omega$ for the vector of ones indexed by the elements of~$\Omega$.  If $S \subseteq [d]$, we denote $\mathbbm{1}_S := (\mathbbm{1}_{\{j \in S\}})_{j \in [d]} \in \{0,1\}^d$. For $x = (x_1,\ldots,x_d)^T \in \mathbb{R}^d$, write $x_S = (x_j : j \in S)$.  For $x \in \mathbb{R}$, let $x_+ := \max(x,0)$ and $x_- := \max(-x,0)$.  Given $a, b \geq 0$, we write $a \lesssim b$ to mean that there exists a universal constant $C > 0$ such that $a \leq Cb$, and, for a generic quantity~$x$, write $a \lesssim_x b$ to mean that there exists $C$, depending only on $x$, such that $a \leq Cb$.  We also write $a \asymp b$ to mean $a \lesssim b$ and $b \lesssim a$.  For random elements $X, Y$, we write $X \perp \!\!\! \perp Y$ to mean that $X$ and $Y$ are independent.  For probability measures $P,Q$ on a measurable space $(\mathcal{Z},\mathcal{C})$, we denote their total variation distance as $d_{\mathrm{TV}}(P,Q) := \sup_{C \in \mathcal{C}} |P(C) - Q(C)|$.


\section{Fr\'echet classes and non-detectable alternatives}
\label{Sec:Frechet}

We begin with a brief discussion of Fr\'echet classes, for which a good reference is~\citet[][Section~3]{joe1997multivariate}, as this will allow us to characterise the set of detectable alternatives of an MCAR test.  Throughout the paper, for $d \in \mathbb{N}$ and measurable topological spaces $(\mathcal{X}_1,\mathcal{A}_1),\ldots,(\mathcal{X}_d,\mathcal{A}_d)$, we let $\mathcal{X} := \prod_{j=1}^d \mathcal{X}_j$.   Given a collection~$\mathbb{S}$ of subsets of $[d]$ and a set of marginal distributions~$P_{\mathbb{S}} = (P_S:S\in \mathbb{S})$, where $P_S$ is defined on $\mathcal{X}_S$, we write $\mathcal{F}_{\mathbb{S}}(P_{\mathbb{S}})$ for the corresponding Fr\'echet class.  As a simple example, if $\mathbb{S}=\bigl\{\{1\}, \ldots, \{d\}\bigr\}$, then $\mathcal{F}_{\mathbb{S}}(P_{\mathbb{S}})$ is the class of all joint distributions with specified marginals $P_{\{1\}},\ldots P_{\{d\}}$. It is easy to see that this Fr\'echet class in non-empty, because it includes the product distribution $P_{\{1\}} \times \ldots \times P_{\{d\}}$. More generally, if $S_1,\ldots,S_m$ is a partition of $[d]$ and $\mathbb{S} = \{S_1,\ldots,S_m\}$, then $\mathcal{F}_{\mathbb{S}}(P_{\mathbb{S}})$ contains the corresponding product distribution. However, when $\mathbb{S}$ contains subsets that overlap, the Fr\'echet class $\mathcal{F}_{\mathbb{S}}(P_{\mathbb{S}})$ may be empty, or equivalently, $P_\mathbb{S}$ may be incompatible.  One simple way in which this may occur is if $d=2$ and $\mathbb{S} = \bigl\{\{1\},\{1,2\}\bigr\}$, but $P_{\{1\}}$ and $P_{\{1,2\}}$ are not consistent.  More interestingly, when $d \geq 3$ it may be the case that $P_\mathbb{S}$ is consistent but we still have $\mathcal{F}_{\mathbb{S}}(P_{\mathbb{S}}) = \emptyset$.  For instance when $d=3$ and $\mathbb{S}=\bigl\{\{1,2\},\{1,3\},\{2,3\} \bigr\}$, let $\rho_{23}=\rho_{13} = 2^{-1/2}$, let $\rho_{12}=-2^{-1/2}$ and, for $1 \leq i<j \leq 3$, let 
\[
    P_{\{i,j\}} = N \biggl( \begin{pmatrix} 0 \\ 0 \end{pmatrix}, \begin{pmatrix} 1 & \rho_{ij} \\ \rho_{ij} & 1 \end{pmatrix} \biggr).
\]
Then any joint distribution $P_{\{1,2,3\}}$ with these marginals would have `covariance matrix'
\[
    \begin{pmatrix} 1 & -2^{-1/2} & 2^{-1/2} \\ -2^{-1/2} & 1 & 2^{-1/2} \\ 2^{-1/2} & 2^{-1/2} & 1 \end{pmatrix},
\]
which has a negative eigenvalue.

We are now in a position to describe the main statistical question that motivates our work.  Given $x = (x_1,\ldots,x_d) \in \mathcal{X}$ and $\omega = (\omega_1,\ldots,\omega_d) \in \{0,1\}^d$, we write $x \circ \omega$ for the element of $\prod_{j=1}^d (\mathcal{X}_j \cup \{\star\})$ that has $j$th entry $x_j$ if $\omega_j =1$ and $j$th entry $\star$, denoting a missing value, if $\omega_j=0$.  Assume that we are given $n$ independent copies of $X \circ \Omega$, where the pair $(X, \Omega)$ takes values in $\mathcal{X} \times \{0,1\}^d$, and wish to test the hypothesis $H_0: X \perp \!\!\! \perp \Omega$, i.e.~that entries of $X$ are MCAR. This can be thought of as an independence test where we do not have complete observations, though we will see that the missingness leads to very different phenomena.  

Let $\mathbb{S} := \bigl\{S \subseteq [d] : \mathbb{P}( \Omega = \mathbbm{1}_S) > 0\bigr\}$ denote the set of all missingness patterns that could be observed.  Writing $P_S$ for the conditional distribution of $X_S$ given that $\Omega = \mathbbm{1}_S$, note that if our data are MCAR, then $P_{\mathbb{S}} := (P_S:S \in \mathbb{S})$ is compatible, because the Fr\'echet class $\mathcal{F}_{\mathbb{S}}(P_{\mathbb{S}})$ contains the distribution of $X$.   

On the other hand, suppose now that our data are not MCAR, but that $P_{\mathbb{S}}$ is still compatible.  If $\tilde{X}$ denotes a random vector, independent of $\Omega$, whose distribution lies in the Fr\'echet class $\mathcal{F}_{\mathbb{S}}(P_{\mathbb{S}})$, then $\tilde{X} \circ \Omega \stackrel{d}{=} X \circ \Omega$, so no test of $H_0$ can have power at compatible alternatives that is greater than its size.  The conclusion of this discussion is stated in Proposition~\ref{Thm:Compatibility} below, where we let~$\Psi$ denote the set of all (randomised) tests based on our observed data $X_1 \circ \Omega_1, \ldots, X_n \circ \Omega_n$, i.e.~the set of Borel measurable functions $\psi: \bigl(\prod_{j=1}^d (\mathcal{X}_j \cup \{\star\})\bigr)^n \rightarrow [0,1]$.
\begin{prop}
\label{Thm:Compatibility}
Let $\mathcal{P}_0$ denote the set of distributions on $\mathcal{X} \times \{0,1\}^d$ that satisfy $H_0$, and let $\mathcal{P}_0'$ denote the set of distributions on $\mathcal{X} \times \{0,1\}^d$ for which the corresponding sequence of conditional distributions $P_{\mathbb{S}}$ is compatible.  Then $\mathcal{P}_0 \subseteq \mathcal{P}_0'$, but for any $\psi \in \Psi$, we have
\[
\sup_{P \in \mathcal{P}_0'} \mathbb{E}_P\psi(X_1 \circ \Omega_1,\ldots,X_n \circ \Omega_n) = \sup_{P \in \mathcal{P}_0} \mathbb{E}_P\psi(X_1 \circ \Omega_1,\ldots,X_n \circ \Omega_n).
\]
\end{prop}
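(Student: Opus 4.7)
My plan is to prove the two statements separately. The inclusion $\mathcal{P}_0 \subseteq \mathcal{P}_0'$ is essentially the observation already made in the paragraph preceding the proposition: if $P \in \mathcal{P}_0$, then $X \perp\!\!\!\perp \Omega$, so for each $S \in \mathbb{S}$ the conditional law of $X_S$ given $\Omega = \mathbbm{1}_S$ equals the (unconditional) marginal law of $X_S$ under the distribution of $X$. Hence the distribution of $X$ lies in the Fr\'echet class $\mathcal{F}_{\mathbb{S}}(P_{\mathbb{S}})$, which is therefore non-empty, and $P \in \mathcal{P}_0'$.

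For the equality of suprema, the inclusion just proved yields $\sup_{\mathcal{P}_0} \mathbb{E}_P \psi \leq \sup_{\mathcal{P}_0'} \mathbb{E}_P \psi$. For the reverse inequality, the key step is a coupling/construction argument: given an arbitrary $P \in \mathcal{P}_0'$, I construct an element $P' \in \mathcal{P}_0$ such that the law of $X_1\circ\Omega_1,\ldots,X_n\circ\Omega_n$ is identical under $P$ and $P'$. Since $P_{\mathbb{S}}$ is compatible, pick any $\tilde{P} \in \mathcal{F}_{\mathbb{S}}(P_{\mathbb{S}})$ and let $(\tilde{X},\tilde{\Omega})$ be a pair with $\tilde{X} \sim \tilde{P}$, with $\tilde{\Omega}$ having the same marginal law as $\Omega$ under $P$, and with $\tilde{X} \perp\!\!\!\perp \tilde{\Omega}$. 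Let $P'$ denote the joint law of $(\tilde{X},\tilde{\Omega})$; by construction $P' \in \mathcal{P}_0$.

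To show $X \circ \Omega \stackrel{d}{=} \tilde{X} \circ \tilde{\Omega}$, I note that the distribution of $X \circ \Omega$ is characterised by the family $\bigl\{\mathbb{P}(\Omega = \mathbbm{1}_S)\bigr\}_{S \subseteq [d]}$ together with the conditional laws of $X_S$ given $\Omega = \mathbbm{1}_S$ for those $S$ with $\mathbb{P}(\Omega = \mathbbm{1}_S) > 0$, i.e.\ for $S \in \mathbb{S}$. Under $P'$, the marginal law of $\tilde{\Omega}$ matches that of $\Omega$ by construction, and for $S \in \mathbb{S}$ the independence of $\tilde{X}$ and $\tilde{\Omega}$ gives that the conditional law of $\tilde{X}_S$ given $\tilde{\Omega} = \mathbbm{1}_S$ equals the marginal law of $\tilde{X}_S$, which is $P_S$ because $\tilde{P} \in \mathcal{F}_{\mathbb{S}}(P_{\mathbb{S}})$; and $P_S$ is by definition the conditional law of $X_S$ given $\Omega = \mathbbm{1}_S$ under $P$. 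Consequently the $n$-fold products also agree in distribution, so $\mathbb{E}_P \psi = \mathbb{E}_{P'} \psi \leq \sup_{\mathcal{P}_0} \mathbb{E}_{Q}\psi$, and taking the supremum over $P \in \mathcal{P}_0'$ completes the argument.

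The only possibly delicate point is verifying that such a pair $(\tilde{X},\tilde{\Omega})$ genuinely exists, but this is routine: the product measure $\tilde{P} \otimes \mathcal{L}(\Omega)$ defines a valid distribution on $\mathcal{X} \times \{0,1\}^d$ once measurability of the relevant spaces is noted, so there is no real obstacle. The whole argument is essentially a formal write-up of the idea already sketched in the paragraph immediately preceding the statement.
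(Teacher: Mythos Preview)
Your proposal is correct and follows exactly the approach the paper takes: the paper does not give a separate proof of this proposition, but the argument is contained in the two paragraphs immediately preceding the statement, and your write-up is a faithful formalisation of that discussion. In particular, the construction $P' = \tilde{P} \otimes \mathcal{L}(\Omega)$ with $\tilde{P} \in \mathcal{F}_{\mathbb{S}}(P_{\mathbb{S}})$ and the verification that $\tilde{X} \circ \tilde{\Omega} \stackrel{d}{=} X \circ \Omega$ are precisely what the paper sketches.
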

A consequence of Proposition~\ref{Thm:Compatibility} is that it is only possible to have non-trivial power against incompatible alternatives to $H_0$, and a search for optimal tests of the MCAR property may be reduced to looking for optimal tests of compatibility.  In subsequent sections, we will construct tests of compatibility, noting that if such a test rejects the null hypothesis, then we can also reject the hypothesis of MCAR. 

\section{Testing compatibility}
\label{Sec:TestingCompatibility}

Let $\mathcal{P}_{\mathbb{S}}$ denote the set of sequences of the form $P_{\mathbb{S}} = (P_S:S \in \mathbb{S})$, where $P_S$ is a distribution on~$\mathcal{X}_S$, and let $\mathcal{P}_{\mathbb{S}}^0$ denote the subset of $\mathcal{P}_{\mathbb{S}}$ consisting of those $P_{\mathbb{S}}$ that are compatible.  In testing compatibility, it is convenient to alter our model very slightly, so that we have deterministic sample sizes within each observation pattern.  More precisely, given a collection~$\mathbb{S} \subseteq [d]$ and $P_\mathbb{S} = (P_S:S \in \mathbb{S}) \in \mathcal{P}_\mathbb{S}$, we assume that we are given independent data $(X_{S,i})_{S \in \mathbb{S},i \in [n_S]}$, where $X_{S,1},\ldots,X_{S,n_S} \sim P_S$.   Our goal is to propose a test of $H_0': P_{\mathbb{S}} \in \mathcal{P}_{\mathbb{S}}^0$, or equivalently, $H_0': \mathcal{F}_{\mathbb{S}}(P_{\mathbb{S}}) \neq \emptyset$.  To this end, for $S \in \mathbb{S}$, let $\mathcal{G}_S^*$ denote the set of all bounded, upper semi-continuous functions on $\mathcal{X}_S$.  We will exploit the characterisation of~\citet[][Proposition~3.13]{kellerer1984duality}, which states that $P_{\mathbb{S}} \in \mathcal{P}_{\mathbb{S}}^0$ if and only if
\begin{equation}
\label{Eq:Characterisation}
    \sum_{S \in \mathbb{S}} \int_{\mathcal{X}_S} f_S(x_S) \,dP_S(x_S) \geq 0 \quad \text{for all  } (f_S : S \in \mathbb{S})  \in \prod_{S \in \mathbb{S}} \mathcal{G}_{S}^* \text{ with } \inf_{x \in \mathcal{X}} \sum_{S \in \mathbb{S}} f_S(x_S) \geq 0.
\end{equation}
This duality theorem can be regarded as a potentially infinite-dimensional generalisation of Farkas's lemma \citep{farkas1902theorie}, which underpins the theory of linear programming.

We now show how~\eqref{Eq:Characterisation} can be used to define a quantitative measure of incompatibility.  For $S \in \mathbb{S}$, let $\mathcal{G}_S$ denote the subset of $\mathcal{G}_S^*$ consisting of functions taking values in $[-1,\infty)$, and let $\mathcal{G}_{\mathbb{S}} := \prod_{S \in \mathbb{S}} \mathcal{G}_S$.  Given $f_S \in \mathcal{G}_S$ for each $S \in \mathbb{S}$, we write $f_\mathbb{S} := (f_S:S \in \mathbb{S}) \in \mathcal{G}_{\mathbb{S}}$.  Now let
\[
    \mathcal{G}_{\mathbb{S}}^+ := \biggl\{ f_{\mathbb{S}} \in \mathcal{G}_{\mathbb{S}}: \inf_{x \in \mathcal{X}} \sum_{S \in \mathbb{S}} f_S(x_S) \geq 0 \biggr\}.
\]
Our key \emph{incompatibility index}, then, is
\begin{equation}
\label{Eq:RPS}
R(P_{\mathbb{S}}):= \sup_{f_{\mathbb{S}} \in \mathcal{G}_\mathbb{S}^+} R(P_{\mathbb{S}},f_{\mathbb{S}}),
\end{equation}
where 
\[
    R(P_{\mathbb{S}},f_{\mathbb{S}}) := -\frac{1}{|\mathbb{S}|} \sum_{S \in \mathbb{S}} \int_{\mathcal{X}_S} f_S(x_S) \, dP_S(x_S).
\]
Since the choice $f_S \equiv 0$ for all $S \in \mathbb{S}$ means that the corresponding $f_{\mathbb{S}}$ belongs to $\mathcal{G}_{\mathbb{S}}^+$, we see from~\eqref{Eq:Characterisation} that $R(P_{\mathbb{S}}) = 0$ if $P_{\mathbb{S}} \in \mathcal{P}_{\mathbb{S}}^0$.  Moreover, if~\eqref{Eq:Characterisation} is violated by some $f_{\mathbb{S}} \in \mathcal{G}_S^*$ with $\inf_{x \in \mathcal{X}} \sum_{S \in \mathbb{S}} f_S(x_S) \geq 0$, then by scaling we may assume that $f_\mathbb{S} \in \mathcal{G}_\mathbb{S}^+$, and hence $R(P_{\mathbb{S}}) > 0$ whenever $P_{\mathbb{S}} \notin \mathcal{P}_{\mathbb{S}}^0$. Finally, observe that we also have $R(P_\mathbb{S}) \leq 1$ for all $P_\mathbb{S} \in \mathcal{P}_\mathbb{S}$; the extreme case $R(P_\mathbb{S}) = 1$ corresponds to \emph{strongly contextual} families of distributions, in the terminology of quantum contextuality \citep{abramsky2011sheaf}. When $|\mathcal{X}| < \infty$, we see from Theorem~\ref{Thm:DualRepresentation} below that $R(P_\mathbb{S}) <1$ if and only if there exists $x \in \mathcal{X}$ with $P_S(\{x_S\})>0$ for all $S \in \mathbb{S}$. 


\begin{thm}
\label{Thm:DualRepresentation}
Suppose that $\mathcal{X}_j$ is a locally compact Hausdorff space\footnote{A brief glossary of definitions of topological and measure-theoretic concepts used in this result and its proof is provided in Section~\ref{Sec:Glossary} for the reader's convenience.}, for each $j \in [d]$, and that every open set in $\mathcal{X}$ is $\sigma$-compact.  Then for any $P_\mathbb{S} \in \mathcal{P}_\mathbb{S}$, 
\begin{equation}
    \label{Eq:InfR}
    R(P_\mathbb{S}) = \inf \bigl\{ \epsilon \in [0,1] : P_\mathbb{S} \in (1-\epsilon) \mathcal{P}_\mathbb{S}^0 + \epsilon \mathcal{P}_\mathbb{S}\bigr\} = 1 - \sup \bigl\{ \epsilon \in [0,1] : P_\mathbb{S} \in \epsilon \mathcal{P}_\mathbb{S}^0 + (1-\epsilon) \mathcal{P}_\mathbb{S}\bigr\}.
\end{equation}
\end{thm}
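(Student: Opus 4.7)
The plan is to prove $R(P_\mathbb{S}) = \epsilon^*$, where $\epsilon^* := \inf\{\epsilon \in [0,1] : P_\mathbb{S} \in (1-\epsilon)\mathcal{P}_\mathbb{S}^0 + \epsilon\mathcal{P}_\mathbb{S}\}$; the second equality in \eqref{Eq:InfR} follows from the first by the substitution $t = 1-\epsilon$. I would route the argument through the auxiliary quantity
\[
t^* := \sup\bigl\{\mu(\mathcal{X}) : \mu \text{ a nonnegative Borel measure on } \mathcal{X},\ \mu_S \leq P_S\ \forall S \in \mathbb{S}\bigr\},
\]
establishing the two identities $\epsilon^* = 1 - t^*$ and $R(P_\mathbb{S}) = 1 - t^*$ separately.

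The identity $\epsilon^* = 1 - t^*$ is bookkeeping. Given a decomposition $P_\mathbb{S} = (1-\epsilon)Q_\mathbb{S} + \epsilon R_\mathbb{S}$ with $Q_\mathbb{S} \in \mathcal{P}_\mathbb{S}^0$ and $Q$ any joint realisation of $Q_\mathbb{S}$ on $\mathcal{X}$, the measure $\mu := (1-\epsilon)Q$ is feasible for the problem defining $t^*$ and has mass $1-\epsilon$; conversely, any feasible $\mu$ of mass $\geq 1-\epsilon$ may be scaled down to mass exactly $1-\epsilon$ without violating $\mu_S \leq P_S$, and then $Q_S := \mu_S/(1-\epsilon)$ together with $R_S := (P_S - \mu_S)/\epsilon$ furnishes the required decomposition. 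For the easy half $R(P_\mathbb{S}) \leq 1 - t^*$, I would use any such decomposition to split, for each $f_\mathbb{S} \in \mathcal{G}_\mathbb{S}^+$,
\[
R(P_\mathbb{S}, f_\mathbb{S}) = -\frac{1-\epsilon}{|\mathbb{S}|}\sum_{S \in \mathbb{S}}\int f_S\,dQ_S \;-\; \frac{\epsilon}{|\mathbb{S}|}\sum_{S \in \mathbb{S}}\int f_S\,dR_S \;\leq\; \epsilon,
\]
where \eqref{Eq:Characterisation} applied to the compatible $Q_\mathbb{S}$ bounds the first term above by $0$ and $f_S \geq -1$ bounds the second by $\epsilon$; taking suprema over $f_\mathbb{S}$ and the infimum over admissible $\epsilon$ delivers the inequality.

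The reverse inequality is the substantive step and rests on a duality theorem for the inequality-constrained problem defining $t^*$, namely
\[
t^* = \inf\biggl\{\sum_{S \in \mathbb{S}}\int_{\mathcal{X}_S} g_S\,dP_S : g_S \in \mathcal{G}_S^*,\ g_S \geq 0,\ \sum_{S \in \mathbb{S}} g_S(x_S) \geq 1 \text{ for all } x \in \mathcal{X}\biggr\}.
\]
Weak duality is immediate, since for any feasible $\mu$ and $g_\mathbb{S}$ in the respective sets, $\mu(\mathcal{X}) \leq \int_\mathcal{X} \sum_{S \in \mathbb{S}} g_S(x_S)\,d\mu(x) \leq \sum_{S \in \mathbb{S}} \int g_S\,dP_S$. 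Strong duality, under the stated local compactness and $\sigma$-compact-openness hypotheses, I would derive from the equality characterisation \eqref{Eq:Characterisation} by adjoining a single auxiliary point $*_j$ to each $\mathcal{X}_j$: the slack $P_S - \mu_S$ is absorbed onto the added coordinates and every inequality is promoted to an equality on the enlarged product space, to which Kellerer's theorem then applies directly. One verifies that the enlargement preserves the topological hypotheses and that the dual certificate restricts back to an admissible bounded upper semi-continuous nonnegative $g_S$ on $\mathcal{X}_S$.

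Finally, the affine bijection $f_S \mapsto g_S := (f_S + 1)/|\mathbb{S}|$ maps $\mathcal{G}_\mathbb{S}^+$ onto the feasible set in the dual formulation of $t^*$, and a direct computation gives $1 - \sum_{S \in \mathbb{S}} \int g_S\,dP_S = R(P_\mathbb{S}, f_\mathbb{S})$, so $1 - t^* = R(P_\mathbb{S})$. Combining with $\epsilon^* = 1 - t^*$ yields the theorem. The main obstacle is the strong-duality step just above, since the cited form of Kellerer's theorem addresses only equality marginal constraints; the bulk of the technical work is in checking that the compactification trick preserves the topological and semi-continuity hypotheses throughout, so that the dual optimiser lives in $\prod_{S \in \mathbb{S}} \mathcal{G}_S^*$ as required.
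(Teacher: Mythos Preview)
Your overall architecture is the paper's: both reduce the theorem to $R(P_\mathbb{S})=1-t^*$, where $t^*:=\sup\{\mu(\mathcal{X}):\mu\ge 0\text{ on }\mathcal{X},\ \mu_S\le P_S\ \forall S\}$, together with the bookkeeping $\epsilon^*=1-t^*$. Your arguments for $\epsilon^*=1-t^*$, for the easy inequality $R(P_\mathbb{S})\le\epsilon^*$, and for the affine bijection $g_S=(f_S+1)/|\mathbb{S}|$ linking $R(P_\mathbb{S})$ to the dual of $t^*$ are all correct and appear in the paper's proof in essentially the same form.

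The gap is exactly where you flag it. Your plan to derive strong duality for $t^*$ from the equality characterisation~\eqref{Eq:Characterisation} by adjoining $*_j$ to each $\mathcal{X}_j$ and ``absorbing the slack $P_S-\mu_S$ onto the added coordinates'' is not worked out, and I do not see how to make it go. The difficulty is twofold. First, the slack is a full measure on $\mathcal{X}_S$, and it is unclear how to place it on the enlarged product $\prod_{j\in S}(\mathcal{X}_j\cup\{*_j\})$ so that membership of a \emph{fixed} family $P_\mathbb{S}^*$ in its Fr\'echet class simultaneously encodes the inequality constraints $\mu_S\le P_S$ and the mass constraint $\mu(\mathcal{X})\ge t$; naive encodings either drop the mass constraint or additionally force the residuals $(P_S-\mu_S)_{S\in\mathbb{S}}$ to be compatible. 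Second, and more basically, \eqref{Eq:Characterisation} is a feasibility dichotomy, not a quantitative duality: even a correct encoding would give, for each $t$, only a Farkas separating functional when the feasible set is empty, and assembling these into a single dual value equal to $t^*$ is itself the strong-duality statement you are trying to prove.

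The paper also adjoins a point $\infty_j$ to each $\mathcal{X}_j$, but as the Alexandroff one-point compactification and for a different purpose: to reduce to compact Hausdorff $\mathcal{X}_j^*$ so that the Riesz representation theorem identifies the nonnegative continuous dual of $C(\mathcal{X}^*)$ with Radon measures. After checking that both $R$ and $\epsilon^*$ are unchanged by this compactification, Kellerer is invoked only to pass from upper semi-continuous to continuous test functions (his Propositions~1.33 and~3.13). Strong duality itself then comes from Isii's infinite-dimensional linear-programming theorem applied to this primal--dual pair, yielding~\eqref{Eq:Duality}, which is precisely your $t^*$-formula; the $\sigma$-compactness hypothesis is used only at the end to ensure that any joint distribution witnessing $\mathcal{P}_\mathbb{S}^0$-membership is automatically Radon. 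So the point-adjoining device you propose is indeed part of the argument, but it buys compactness for Riesz--Isii, not an inequality-to-equality reduction for Kellerer.
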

\noindent \textbf{Remark}: If $\mathcal{X}$ is second countable, then every open set in $\mathcal{X}$ is $\sigma$-compact.

Theorem~\ref{Thm:DualRepresentation} can be regarded as providing a dual representation for $R(P_{\mathbb{S}})$.  In the quantum physics literature and for consistent families of distributions on discrete spaces, the second and third expressions in~\eqref{Eq:InfR} are known as the \emph{contextual fraction} \citep{abramsky2017contextual}.  The first step of the proof of Theorem~\ref{Thm:DualRepresentation} is to apply the idea of \emph{Alexandroff (one-point) compactification}  \citep{alexandroff1924metrisation} to reduce the problem to compact Hausdorff spaces.  Strong duality for linear programming \citep[][Theorem~2.3]{isii1964inequalities}, combined with the Riesz representation theorem for compact spaces, then allows us to deduce the result.

 With our incompatibility index now defined, we can now introduce the minimax framework for our hypothesis testing problem.  Writing $n_{\mathbb{S}} := (n_S:S \in \mathbb{S}) \in \mathbb{N}^\mathbb{S}$, a test of $H_0'$ is a measurable function $\psi'_{n_{\mathbb{S}}}:\prod_{S \in \mathbb{S}} \mathcal{X}_S^{n_S} \rightarrow [0,1]$, and we write $\Psi'_{n_{\mathbb{S}}}$ for the set of all such tests.  Given $\rho \geq 0$, it is convenient to write
\[
\mathcal{P}_\mathbb{S}(\rho) := \{P_\mathbb{S} \in \mathcal{P}_\mathbb{S}:R(P_\mathbb{S}) \geq \rho\},
\]
so that $\mathcal{P}_\mathbb{S}(0) = \mathcal{P}_\mathbb{S}$, $\mathcal{P}_\mathbb{S}^0 = \mathcal{P}_\mathbb{S} \setminus \cup_{\epsilon > 0} \mathcal{P}_\mathbb{S}(\epsilon)$ and $\mathcal{P}_\mathbb{S}(\epsilon) = \emptyset$ for $\epsilon > 1$.  
The minimax risk at separation $\rho$ in this problem is defined as
\[
\mathcal{R}(n_\mathbb{S},\rho) := \inf_{\psi'_{n_{\mathbb{S}}} \in \Psi'_{n_{\mathbb{S}}}}\biggl\{ \sup_{P_\mathbb{S} \in \mathcal{P}_\mathbb{S}^0} \mathbb{E}_{P_\mathbb{S}}(\psi'_{n_{\mathbb{S}}}) + \sup_{P_\mathbb{S} \in \mathcal{P}_\mathbb{S}(\rho) }\mathbb{E}_{P_\mathbb{S}}(1-\psi'_{n_{\mathbb{S}}})\biggr\};
\]
thus $\mathcal{R}(n_\mathbb{S},\rho) = 0$ for $\rho > 1$.  Finally, the minimax testing radius is defined as
\[
\rho^*(n_{\mathbb{S}}) := \inf\bigl\{\rho \geq 0: \mathcal{R}(n_\mathbb{S},\rho) \leq 1/2\},
\]
so that $\rho^*(n_{\mathbb{S}}) \leq 1$.

\subsection{A universal test in the discrete case}
\label{Sec:Discrete}

In this subsection, we will assume that $\mathcal{X}_j = [m_j]$ for every $j \in [d]$, where $m_j \in \mathbb{N}$.  Given our data, for each $S \in \mathbb{S}$ and $A_S \in 2^{\mathcal{X}_S}$, define the empirical distribution of $(X_{S,i})_{i \in [n_S]}$ by
\[
    \hat{P}_S(A_S):= \frac{1}{n_S} \sum_{i=1}^{n_S} \mathbbm{1}_{\{X_{S,i} \in A_S\}}
\]
and write $\hat{P}_{\mathbb{S}} := (\hat{P}_S : S \in \mathbb{S})$.  We propose to reject $H_0'$ at the significance level $\alpha \in (0,1)$ if $R(\hat{P}_{\mathbb{S}}) \geq C_\alpha$, where
\begin{align*}
    C_\alpha&:= \frac{1}{2} \sum_{S \in \mathbb{S}} \Bigl( \frac{|\mathcal{X}_S|-1}{n_S} \Bigr)^{1/2} +  \biggl\{ \frac{1}{2} \log(1/\alpha) \sum_{S \in \mathbb{S}} \frac{1}{n_S} \biggr\}^{1/2}.  
\end{align*}
The following proposition provides size and power guarantees for this test. 
\begin{prop}
\label{Prop:DiscreteTest1}
 Fix $\alpha, \beta \in (0,1)$.  Whenever $P_{\mathbb{S}} = (P_S:S \in \mathbb{S}) \in \mathcal{P}_{\mathbb{S}}^0$, we have $\mathbb{P}_{P_{\mathbb{S}}}\bigl(R(\hat{P}_{\mathbb{S}}) \geq C_\alpha\bigr) \leq \alpha$. Moreover, 
 for any $P_{\mathbb{S}} \in \mathcal{P}_\mathbb{S}$ satisfying
\[
    R(P_{\mathbb{S}}) \geq  C_\alpha + C_\beta,
    \]
we have $\mathbb{P}_{P_{\mathbb{S}}}\bigl(R(\hat{P}_{\mathbb{S}}) \geq  C_\alpha\bigr) \geq 1-\beta$.
\end{prop}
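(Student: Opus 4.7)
My plan is to prove both parts via a concentration argument for the functional $R(\hat{P}_\mathbb{S})$: the first term of $C_\alpha$ will arise as a bias bound on $\mathbb{E}[R(\hat{P}_\mathbb{S})]$ under the null, while the second will come from a McDiarmid bounded-differences deviation around that mean. A Jensen argument exploiting the convexity of $R(\cdot)$ as a supremum of linear functionals of $P_\mathbb{S}$ then handles the power guarantee.

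The pivotal preliminary reduction is a truncation: without loss of generality, the supremum defining $R(P_\mathbb{S})$ in~\eqref{Eq:RPS} may be restricted to $f_\mathbb{S}$ additionally satisfying $f_S(x_S) \leq |\mathbb{S}| - 1$ for every $S, x_S$. Indeed, if some $f_{S_0}(x_{S_0}^*)$ exceeds $|\mathbb{S}|-1$, capping it at $|\mathbb{S}|-1$ preserves $\sum_S f_S \geq 0$ (since the remaining $f_S \geq -1$ can contribute at most $-(|\mathbb{S}|-1)$ in total) and weakly increases the objective $-\frac{1}{|\mathbb{S}|} \sum_S \int f_S \, dP_S$ for any $P_\mathbb{S}$. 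With this uniform oscillation $|\mathbb{S}|$ on each feasible $f_S$, for $P_\mathbb{S} \in \mathcal{P}_\mathbb{S}^0$ every feasible $f_\mathbb{S}$ satisfies $R(P_\mathbb{S}, f_\mathbb{S}) \leq 0$, so
\[
  R(\hat{P}_\mathbb{S}) \leq \sup_{f_\mathbb{S}} \frac{1}{|\mathbb{S}|} \sum_{S \in \mathbb{S}} \int f_S \, d(P_S - \hat{P}_S) \leq \sum_{S \in \mathbb{S}} d_{\mathrm{TV}}(\hat{P}_S, P_S),
\]
because the inner supremum against a mean-zero signed measure, over coordinate-wise ranges of length $|\mathbb{S}|$, equals $|\mathbb{S}|\, d_{\mathrm{TV}}$. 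Taking expectations and invoking the classical bound $\mathbb{E}\, d_{\mathrm{TV}}(\hat{P}_S, P_S) \leq \frac{1}{2}\sqrt{(|\mathcal{X}_S| - 1)/n_S}$ then controls $\mathbb{E}[R(\hat{P}_\mathbb{S})]$ by the first term of $C_\alpha$.

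For the concentration, the capped representation shows that altering a single observation $X_{S_0, i}$ changes $R(\hat{P}_\mathbb{S}, f_\mathbb{S})$ uniformly over feasible $f_\mathbb{S}$ by at most $|\mathbb{S}|/(|\mathbb{S}|\, n_{S_0}) = 1/n_{S_0}$, so $R(\hat{P}_\mathbb{S})$ enjoys bounded differences with constants $1/n_{S_0}$. McDiarmid then gives $\mathbb{P}\bigl(R(\hat{P}_\mathbb{S}) - \mathbb{E}[R(\hat{P}_\mathbb{S})] \geq t\bigr) \leq \exp\bigl(-2t^2 / \sum_{S} n_S^{-1}\bigr)$; the choice $t = \bigl\{\frac{1}{2} \log(1/\alpha) \sum_{S} n_S^{-1}\bigr\}^{1/2}$ reproduces the second term of $C_\alpha$, and combining with the expectation bound yields the size claim. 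For power, Jensen's inequality applied to the convex $R(\cdot)$ gives $\mathbb{E}[R(\hat{P}_\mathbb{S})] \geq R(P_\mathbb{S})$, so the lower-tail McDiarmid bound with the same variance proxy yields $R(\hat{P}_\mathbb{S}) \geq R(P_\mathbb{S}) - \bigl\{\frac{1}{2} \log(1/\beta) \sum_{S} n_S^{-1}\bigr\}^{1/2}$ with probability at least $1-\beta$; the assumed separation $R(P_\mathbb{S}) \geq C_\alpha + C_\beta$ then forces $R(\hat{P}_\mathbb{S}) \geq C_\alpha$ on this event.

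The main obstacle I anticipate is the truncation step: the class $\mathcal{G}_\mathbb{S}^+$ is a priori unbounded above, so neither the TV-based expectation bound nor the bounded-differences constants go through before the reduction. The observation that the constraint $\sum_S f_S \geq 0$ combined with $f_S \geq -1$ makes $|\mathbb{S}|-1$ a free upper bound is the single structural fact that makes both terms of $C_\alpha$ emerge with matching constants.
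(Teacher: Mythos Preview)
Your proposal is correct and follows essentially the same route as the paper: truncation to $f_S \leq |\mathbb{S}|-1$, a McDiarmid bounded-differences argument with constants $1/n_S$ yielding the sub-Gaussian deviation with variance proxy $\sum_S n_S^{-1}$, and an expectation bound via total variation and the Cauchy--Schwarz inequality $\sum_{x_S}\sqrt{P_S(x_S)(1-P_S(x_S))}\leq \sqrt{|\mathcal{X}_S|-1}$. The one (minor) difference is that for the power direction the paper proves the two-sided bound $|\mathbb{E}R(\hat P_\mathbb{S})-R(P_\mathbb{S})|\leq \tfrac{1}{2}\sum_S\sqrt{(|\mathcal{X}_S|-1)/n_S}$, whereas you use Jensen's inequality applied to the convex $R(\cdot)$ to obtain $\mathbb{E}R(\hat P_\mathbb{S})\geq R(P_\mathbb{S})$ directly; your route is slightly sharper (it shows power under the weaker condition $R(P_\mathbb{S})\geq C_\alpha+\{\tfrac12\log(1/\beta)\sum_S n_S^{-1}\}^{1/2}$), but both establish the stated claim.
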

Proposition~\ref{Prop:DiscreteTest1} reveals in particular that in addition to having guaranteed finite-sample size control, our test is consistent against any fixed, incompatible alternative; in other words, whenever $R(P_{\mathbb{S}}) > 0$, we have $\mathbb{P}_{P_{\mathbb{S}}}\bigl(R (\hat{P}_{\mathbb{S}}) \geq C_\alpha\bigr) \rightarrow 1$ as $\min_{S \in \mathbb{S}} n_S \rightarrow \infty$.  In combination with Proposition~\ref{Thm:Compatibility}, then, we see that from a testing perspective, compatibility is the right proxy for MCAR, in that distributions of $(X,\Omega)$ that do not satisfy the MCAR hypothesis are detectable if and only if their observed margins are incompatible. Moreover, we have the following upper bound on the minimax separation rate:
\[
    \rho^*(n_\mathbb{S}) \leq \sum_{S \in \mathbb{S}} \Bigl( \frac{|\mathcal{X}_S|-1}{n_S} \Bigr)^{1/2} +  2\biggl( \log 2 \sum_{S \in \mathbb{S}} \frac{1}{n_S} \biggr)^{1/2} \lesssim_{|\mathbb{S}|} \max_{S \in \mathbb{S}} \biggl( \frac{|\mathcal{X}_S|}{n_S} \biggr)^{1/2}.
\]

As far as computation of the test statistic is concerned, observe that, writing $\mathcal{X}_{\mathbb{S}} := \{(S,x_S):S \in \mathbb{S}, x_S \in \mathcal{X}_S\}$, we can identify $\mathcal{G}_{\mathbb{S}}$ with $[-1,\infty)^{\mathcal{X}_\mathbb{S}}$, and moreover, any $P_\mathbb{S} \in \mathcal{P}_\mathbb{S}$ can be identified with an element of $[0,1]^{\mathcal{X}_\mathbb{S}}$.  We will show in Proposition~\ref{Prop:L1Projection} below that the supremum in~\eqref{Eq:RPS} is attained.  In fact,  $R(P_{\mathbb{S}},\cdot)$ is linear and, under the identification above, $\mathcal{G}_{\mathbb{S}}^+$ is a  convex polyhedral set, so we can compute $R(\hat{P}_{\mathbb{S}})$ using efficient linear programming algorithms.  

\subsection{An improved test under additional information}
\label{Sec:ImprovedTest}

In this subsection, we show how in the discrete setting of Section~\ref{Sec:Discrete}, it may be possible to reduce the critical value of our test, while retaining finite-sample Type I error control, when certain information about the facet structure of relevant polytopes is available.  This information does not depend on any quantities that are unknown to the practitioner, though exact computation may be a challenge when $|\mathbb{S}|$ or the alphabet sizes are large. 

Before we can describe our improved test, it is helpful to study the geometric structure of the problem further.  Regarding $\mathcal{G}_\mathbb{S}^+$ as a polyhedral convex subset of $[-1,\infty)^{\mathcal{X}_{\mathbb{S}}}$, it has a finite number of extreme points, so $\sup_{f_\mathbb{S} \in \mathcal{G}_\mathbb{S}^+} R(P_\mathbb{S}, f_\mathbb{S}) = \max_{\ell \in [L]} R(P_\mathbb{S}, f_\mathbb{S}^{(\ell)})$ for some $f_\mathbb{S}^{(1)},\ldots,f_\mathbb{S}^{(L)} \in \mathcal{G}_\mathbb{S}^+$. Thus $P_\mathbb{S} \in \mathcal{P}_\mathbb{S}^{0}$ if and only if $\max_{\ell \in [L]} R(P_\mathbb{S}, f_\mathbb{S}^{(\ell)}) \leq 0$, and  $\mathcal{P}_\mathbb{S}^{0}$ can be identified with a finite intersection of halfspaces, i.e.~it can be identified with a convex polyhedron in $[0,1]^{\mathcal{X}_\mathbb{S}}$. 
Now define the \emph{marginal cone\footnote{Here, given $\lambda > 0$ and a distribution $P$ on a measurable space $(\mathcal{Z},\mathcal{C})$, the measure $\lambda \cdot P$ is defined in the obvious way by $(\lambda \cdot P)(C) := \lambda \cdot P(C)$ for $C \in \mathcal{C}$; likewise, for a family of distributions $\mathcal{P}$, we write $\lambda \cdot \mathcal{P} := \{\lambda \cdot P:P \in \mathcal{P}\}$.}} $\mathcal{P}_\mathbb{S}^{0,*} := \{\lambda \cdot \mathcal{P}_\mathbb{S}^0 : \lambda \geq 0\}$. From the discussion above, $\mathcal{P}_\mathbb{S}^{0,*}$ can be identified with all non-negative multiples of a convex polyhedron, so can itself be identified with a convex polyhedral cone in $[0,\infty)^{\mathcal{X}_\mathbb{S}}$.  

When $\emptyset \neq S_2 \subseteq S_1 \subseteq [d]$ and $P_{S_1}$ is a measure on $\mathcal{X}_{S_1}$, we write $P_{S_1}^{S_2}$ for the marginal measure of $P_{S_1}$ on $\mathcal{X}_{S_2}$.  Recall that a family $P_{\mathbb{S}} = (P_S:S \in \mathbb{S}) \in \mathcal{P}_\mathbb{S}$ is \emph{consistent} if, whenever $S_1,S_2 \in \mathbb{S}$ have $S_1 \cap S_2 \neq \emptyset$, we have $P_{S_1}^{S_1 \cap S_2} = P_{S_2}^{S_1 \cap S_2}$.  We let $\mathcal{P}_{\mathbb{S}}^{\mathrm{cons}} \subseteq \mathcal{P}_{\mathbb{S}}$ denote the set of consistent families of distributions on~$\mathcal{X}_{\mathbb{S}}$, with corresponding \emph{consistent cone} $\mathcal{P}_{\mathbb{S}}^{\mathrm{cons},*} := \{\lambda \cdot \mathcal{P}_{\mathbb{S}}^{\mathrm{cons}}:\lambda \geq 0\}$ and \emph{consistent ball} $\mathcal{P}_\mathbb{S}^{\mathrm{cons},**}:=\{\lambda \cdot \mathcal{P}_{\mathbb{S}}^{\mathrm{cons}}:\lambda \in [0,1]\}$.  Thinking of $\mathcal{P}_\mathbb{S}^{\mathrm{cons}}$ as a convex polytope in $[0,\infty)^{\mathcal{X}_\mathbb{S}}$, the Minkowski sum $\mathcal{P}_\mathbb{S}^{0,*} + \mathcal{P}_\mathbb{S}^{\mathrm{cons},**}$ is also a convex polyhedral set, so has a finite number of facets \citep[][Theorem~19.1]{rockafellar1997convex}.  These facets fall into two categories: those that define the non-negativity conditions (i.e.~$(P_\mathbb{S})_{(S,x_S)} = P_S(\{x_S\}) \geq 0$ for all $S \in \mathbb{S}$ and $x_S \in \mathcal{X}_S$), which are not of primary interest to us here, and the remainder, which we refer to as the set of \emph{essential} facets. We remark that, in decomposable settings where $\mathcal{P}_\mathbb{S}^0 = \mathcal{P}_\mathbb{S}^\mathrm{cons}$, there are no essential facets.  More generally, regardless of whether $\mathbb{S}$ is decomposable, we still have the following:
\begin{prop}
\label{Prop:FullDim}
$\mathcal{P}_\mathbb{S}^0$ is a full-dimensional subset of $\mathcal{P}_\mathbb{S}^\mathrm{cons}$.
\end{prop}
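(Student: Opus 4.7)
The plan is to show that the uniform marginal family $U_\mathbb{S} := (U_S : S \in \mathbb{S})$, where $U_S$ is uniform on $\mathcal{X}_S$, lies in the relative interior of $\mathcal{P}_\mathbb{S}^0$ within $\mathrm{aff}(\mathcal{P}_\mathbb{S}^\mathrm{cons})$; this yields $\dim \mathcal{P}_\mathbb{S}^0 = \dim \mathcal{P}_\mathbb{S}^\mathrm{cons}$, as required. The uniform probability measure $U$ on $\mathcal{X}$ has marginal family $U_\mathbb{S}$, so $U_\mathbb{S} \in \mathcal{P}_\mathbb{S}^0$ to begin with. The relative-interior claim then reduces to showing that for every $P_\mathbb{S} \in \mathcal{P}_\mathbb{S}^\mathrm{cons}$ there exists $t_0 > 0$ such that $(1-t) U_\mathbb{S} + t P_\mathbb{S} \in \mathcal{P}_\mathbb{S}^0$ for all $t \in [0, t_0]$; rearranging would express any such $P_\mathbb{S}$ as an affine combination of two elements of $\mathcal{P}_\mathbb{S}^0$, giving $\mathrm{aff}(\mathcal{P}_\mathbb{S}^\mathrm{cons}) \subseteq \mathrm{aff}(\mathcal{P}_\mathbb{S}^0)$, with the reverse containment obvious.

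My route to this is a signed lifting lemma: every $P_\mathbb{S} \in \mathcal{P}_\mathbb{S}^\mathrm{cons}$ arises as the marginal family of some (possibly signed) measure $\mu$ on $\mathcal{X}$ with $\mu(\mathcal{X}) = 1$. Granted such a $\mu$, the mixture $\mu_t := (1-t) U + t \mu$ is nonnegative, and hence a genuine probability measure, for all sufficiently small $t > 0$: this follows because $U$ assigns strictly positive mass to every point of $\mathcal{X}$ and $\mu$ is bounded. Its marginal family is precisely $(1-t) U_\mathbb{S} + t P_\mathbb{S}$, which therefore belongs to $\mathcal{P}_\mathbb{S}^0$ for such $t$, completing the argument.

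The main obstacle is the signed lifting lemma, which I would prove by a tensor-product Fourier decomposition. For each $j \in [d]$, fix an orthogonal basis $\phi_j^{(0)} \equiv 1, \phi_j^{(1)}, \ldots, \phi_j^{(m_j-1)}$ of $\mathbb{R}^{\mathcal{X}_j}$ with respect to counting measure, and form the product basis $\phi^{(k)}(x) := \prod_{j=1}^d \phi_j^{(k_j)}(x_j)$ of $\mathbb{R}^{\mathcal{X}}$, indexed by multi-indices $k$ with $\mathrm{supp}(k) := \{j : k_j \neq 0\}$. A short computation using orthogonality of each $\phi_j^{(k_j)}$ to $\phi_j^{(0)}$ shows that the marginal on $\mathcal{X}_S$ of any signed measure $\nu = \sum_k \hat\nu(k) \phi^{(k)}$ depends only on the coefficients with $\mathrm{supp}(k) \subseteq S$, and moreover determines them. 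Consistency of $(P_S)_{S \in \mathbb{S}}$, passed down from $S_1 \cap S_2$ to $\mathrm{supp}(k) \subseteq S_1 \cap S_2$, ensures that defining $\hat\mu(k)$ from the analogous coefficient of $P_S$ is well-defined whenever some $S \in \mathbb{S}$ contains $\mathrm{supp}(k)$; taking $\hat\mu(k) = 0$ for the remaining $k$ then gives the required signed lift, with the $k=0$ coefficient automatically enforcing $\mu(\mathcal{X}) = 1$.
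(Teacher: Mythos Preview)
Your proof is correct and takes a genuinely different route from the paper's. The paper works on the dual side: it shows that any affine functional $f_\mathbb{S}$ that is constant on $\mathcal{P}_\mathbb{S}^0$ must satisfy $\mathbb{A}_\mathbb{S}^T f_\mathbb{S} = 0$, and then proves by induction on $|\mathbb{S}|$ (peeling off one $S_0 \in \mathbb{S}$ at a time and absorbing $f_{S_0}$ into the remaining components) that any such $f_\mathbb{S}$ is constant on all of $\mathcal{P}_\mathbb{S}^\mathrm{cons}$. This is a purely linear-algebraic argument that avoids any basis choice but is a little intricate.

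Your approach is primal and constructive: you show that every consistent family admits a signed joint lift via the tensor-product orthogonal basis, so that mixing with the uniform joint distribution yields a genuine probability lift of $(1-t)U_\mathbb{S} + tP_\mathbb{S}$ for small $t>0$, placing $U_\mathbb{S}$ in the relative interior of $\mathcal{P}_\mathbb{S}^0$ inside $\mathrm{aff}(\mathcal{P}_\mathbb{S}^\mathrm{cons})$. The key well-definedness check (that the Fourier coefficient $\hat\mu(k)$ extracted from $P_{S_1}$ agrees with that from $P_{S_2}$ when $\mathrm{supp}(k)\subseteq S_1\cap S_2$) goes through once one tracks the normalisation factors $\prod_{j\notin S}m_j$ carefully, using $P_{S_1}^{S_1\cap S_2}=P_{S_2}^{S_1\cap S_2}$; you gloss over this normalisation slightly, but the argument is sound. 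Your method has the advantage of exhibiting an explicit interior point and connecting the result to the standard fact that the consistent cone is exactly the image of \emph{signed} measures under marginalisation, whereas the paper's induction is more self-contained and basis-free.
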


In addition to the geometric insight of Proposition~\ref{Prop:FullDim}, it is also interesting from a statistical perspective when we consider testing compatibility against consistent alternatives (which captures the main essence of the problem in many examples; see the discussion at the end of Section~\ref{Sec:ImprovedTest}).  It  reveals a distinction with standard, fully-observed hypothesis testing problems (e.g.~goodness-of-fit testing, two-sample testing, independence testing), where the null hypothesis parameter space is of lower dimension \citep[e.g.,][]{fienberg1968geometry}.

We are now in a position to present Proposition~\ref{Prop:L1Projection}, whose main (second) part provides a decomposition of the incompatibility index.

\begin{prop}
\label{Prop:L1Projection}
In the discrete setting above, the supremum in~\eqref{Eq:RPS} and the infimum in~\eqref{Eq:InfR} are attained.  Moreover, writing $F$ for the number of essential facets of $\mathcal{P}_\mathbb{S}^{0,*} + \mathcal{P}_\mathbb{S}^{\mathrm{cons},**}$, there exist $f_\mathbb{S}^{(1)},\ldots,f_\mathbb{S}^{(F)} \in \mathcal{G}_\mathbb{S}^+$, depending only on $\mathbb{S}$ and $\mathcal{X}_\mathbb{S}$, such that for any $P_{\mathbb{S}} = (P_S:S \in \mathbb{S}) \in \mathcal{P}_{\mathbb{S}}$, we have
\begin{align}
\label{Eq:RDecompStatement}
    \max\biggl\{ \max_{\ell\in [F]} R(P_\mathbb{S},& f_\mathbb{S}^{(\ell)}), \frac{1}{|\mathbb{S}|} \max_{S_1,S_2 \in \mathbb{S}} d_\mathrm{TV} \bigl( P_{S_1}^{S_1 \cap S_2}, P_{S_2}^{S_1 \cap S_2} \bigr) \biggr\} \leq R(P_\mathbb{S}) \nonumber \\
    & \leq \max_{\ell\in [F]} R(P_\mathbb{S}, f_\mathbb{S}^{(\ell)})_+ + |\mathbb{S}| 2^{|\mathbb{S}|+2} \cdot \max_{S_1,S_2 \in \mathbb{S}} d_\mathrm{TV} \bigl( P_{S_1}^{S_1 \cap S_2}, P_{S_2}^{S_1 \cap S_2} \bigr),
\end{align}
where we interpret $\max_{\ell \in [0]} R(P_\mathbb{S},f_{\mathbb{S}}^{(\ell)})_+=0$.
\end{prop}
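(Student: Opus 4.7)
The plan is to split the proposition into three sub-claims: attainment of the extrema in~\eqref{Eq:RPS} and~\eqref{Eq:InfR}, the lower bound in~\eqref{Eq:RDecompStatement}, and the upper bound in~\eqref{Eq:RDecompStatement}.

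For the attainment claims, I identify $\mathcal{G}_\mathbb{S}$ with the closed polyhedron $[-1,\infty)^{\mathcal{X}_\mathbb{S}}$, so that $\mathcal{G}_\mathbb{S}^+$ is the further intersection with the finite family of halfspaces $\sum_{S \in \mathbb{S}} f_S(x_S) \geq 0$ indexed by $x \in \mathcal{X}$, and is itself a closed polyhedron. By Theorem~\ref{Thm:DualRepresentation} the linear functional $R(P_\mathbb{S}, \cdot)$ is bounded above by~$1$ on $\mathcal{G}_\mathbb{S}^+$, so the fundamental theorem of linear programming yields attainment of the supremum at a vertex. For the infimum in~\eqref{Eq:InfR}, $\mathcal{P}_\mathbb{S}^0$ and $\mathcal{P}_\mathbb{S}$ are compact in the standard Euclidean topology on the product of simplices $[0,1]^{\mathcal{X}_\mathbb{S}}$, so the set of admissible $\epsilon$ is closed.

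For the lower bound in~\eqref{Eq:RDecompStatement}, the first inequality is immediate from $f_\mathbb{S}^{(\ell)} \in \mathcal{G}_\mathbb{S}^+$. For the total-variation part, given any $S_1, S_2 \in \mathbb{S}$ with $S_1 \cap S_2 \neq \emptyset$, pick $A \subseteq \mathcal{X}_{S_1 \cap S_2}$ so that $P_{S_2}^{S_1 \cap S_2}(A) - P_{S_1}^{S_1 \cap S_2}(A) = d_\mathrm{TV}(P_{S_1}^{S_1 \cap S_2}, P_{S_2}^{S_1 \cap S_2})$ and plug in the test functions $f_{S_1}(x_{S_1}) := \mathbbm{1}_A(x_{S_1 \cap S_2})$, $f_{S_2}(x_{S_2}) := -\mathbbm{1}_A(x_{S_1 \cap S_2})$, and $f_S \equiv 0$ otherwise. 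All entries lie in $[-1, 1]$ and the pointwise sum vanishes, so $f_\mathbb{S} \in \mathcal{G}_\mathbb{S}^+$, and a direct computation yields $R(P_\mathbb{S}, f_\mathbb{S}) = |\mathbb{S}|^{-1} d_\mathrm{TV}(P_{S_1}^{S_1 \cap S_2}, P_{S_2}^{S_1 \cap S_2})$.

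The upper bound is where the real work lies. Writing $\eta_1 := \max_\ell R(P_\mathbb{S}, f_\mathbb{S}^{(\ell)})_+$ and $\eta_2 := \max_{S_1, S_2 \in \mathbb{S}} d_\mathrm{TV}(P_{S_1}^{S_1 \cap S_2}, P_{S_2}^{S_1 \cap S_2})$, I would proceed in three stages. First, construct a consistent family $\tilde P_\mathbb{S} \in \mathcal{P}_\mathbb{S}^{\mathrm{cons}}$ with $d_\mathrm{TV}(P_S, \tilde P_S) \leq c_1(|\mathbb{S}|) \eta_2$ for every $S$, by iteratively replacing pairs of overlapping marginals by versions matching a common consensus on the overlap (for example, their average). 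Second, show that for consistent $\tilde P_\mathbb{S}$, the essential-facet characterization of $\mathcal{P}_\mathbb{S}^{0,*} + \mathcal{P}_\mathbb{S}^{\mathrm{cons},**}$ delivers a mixing decomposition $\tilde P_\mathbb{S} = (1 - \epsilon) Q + \epsilon R$ with $Q \in \mathcal{P}_\mathbb{S}^0$, $R \in \mathcal{P}_\mathbb{S}^{\mathrm{cons}}$ and $\epsilon \lesssim_{|\mathbb{S}|} \max_\ell R(\tilde P_\mathbb{S}, f_\mathbb{S}^{(\ell)})_+$, because any violation of compatibility within the consistent cone must be detected by an essential-facet inequality. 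Third, a Lipschitz-stability step, obtained by restricting the supremum in~\eqref{Eq:RPS} to vertices of $\mathcal{G}_\mathbb{S}^+$ whose coordinate entries are uniformly bounded by a constant depending only on $|\mathbb{S}|$, transfers the bound from $R(\tilde P_\mathbb{S})$ back to $R(P_\mathbb{S})$ and picks up a further TV contribution of order $c_1(|\mathbb{S}|) \eta_2$.

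The main obstacle is the consistent approximation in the first stage: modifying $P_{S_1}$ to match $P_{S_2}$ on $S_1 \cap S_2$ in general disturbs consistency on other overlaps. Tracking how such corrections compound across all $\binom{|\mathbb{S}|}{2}$ overlapping pairs is combinatorially delicate, and I expect this to require either an inductive argument on $|\mathbb{S}|$ or an inclusion-exclusion over subsets of $\mathbb{S}$, which will be the source of the combinatorial factor $|\mathbb{S}| 2^{|\mathbb{S}|+2}$ in the final bound.
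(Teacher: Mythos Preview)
Your overall architecture---attainment via polyhedrality/compactness, the lower bound via plug-in test functions, and an upper bound via a consistent approximation---matches the paper's, and your instinct that the consistent approximation requires an inclusion--exclusion over subsets of $\mathbb{S}$ is exactly right: the paper uses the explicit correction
\[
\tilde{p}_{S_0}(x_{S_0}) = p_{S_0}(x_{S_0}) + \sum_{\substack{\omega \in \{0,1\}^\mathbb{S}:\\ \omega_{S_0}=1,\ T_\omega \neq \emptyset}} \frac{(-1)^{|\omega|}}{|\omega|(|\omega|-1)} \cdot \frac{|\mathcal{X}_{T_\omega}|}{|\mathcal{X}_{S_0}|} \sum_{S:\omega_S=1} \bigl\{p_S^{T_\omega}(x_{T_\omega}) - p_{S_0}^{T_\omega}(x_{T_\omega})\bigr\},
\]
where $T_\omega = \bigcap_{S:\omega_S=1} S$, and then adds a compatible non-negative correction to land in $\mathcal{P}_\mathbb{S}^{\mathrm{cons},*}$. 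The factor $|\mathbb{S}|2^{|\mathbb{S}|+2}$ drops out of bounding the total negative mass of $\tilde p_\mathbb{S}$.

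Two points, however, deserve attention. First, you treat the existence of $f_\mathbb{S}^{(1)},\ldots,f_\mathbb{S}^{(F)} \in \mathcal{G}_\mathbb{S}^+$ as given, but this is part of what the proposition asserts. The paper's argument here is: for consistent $P_\mathbb{S}$ one has $R(P_\mathbb{S}) \leq \epsilon$ iff $P_\mathbb{S} \in (1-\epsilon)\mathcal{P}_\mathbb{S}^0 + \epsilon \mathcal{P}_\mathbb{S}^{\mathrm{cons}}$ iff $P_\mathbb{S} \in \epsilon(\mathcal{P}_\mathbb{S}^{0,*} + \mathcal{P}_\mathbb{S}^{\mathrm{cons},**})$, so that $R(P_\mathbb{S}) = \|Bp_\mathbb{S}\|_\infty$ where $B$ encodes the essential facets. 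One must then show (i) that all facet offsets equal $1$ (no homogeneous constraints survive, because $\bigcup_{\epsilon \geq 0} \mathcal{P}_\mathbb{S}^{\epsilon,*} = \mathcal{P}_\mathbb{S}^{\mathrm{cons},*}$), and (ii) that each row of $B$ can be identified, via uniqueness of halfspace representations in the affine hull, with a scalar multiple of an extreme point of $\{f_\mathbb{S} \in \mathcal{G}_\mathbb{S}^+ : f_\mathbb{S} \leq |\mathbb{S}|-1\}$. This yields the \emph{exact} formula $R(P_\mathbb{S}) = \max_{\ell \in [F]} R(P_\mathbb{S}, f_\mathbb{S}^{(\ell)})_+$ for consistent $P_\mathbb{S}$, not merely $\lesssim_{|\mathbb{S}|}$.

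Second, your three-stage plan (approximate by $\tilde P_\mathbb{S}$, bound $R(\tilde P_\mathbb{S})$ via facets, then Lipschitz-transfer back to $R(P_\mathbb{S})$) works but is less direct than the paper's route. The paper instead writes, for \emph{any} $P_\mathbb{S}^{\mathrm{cons},*} \in \mathcal{P}_\mathbb{S}^{\mathrm{cons},*}$,
\[
R(P_\mathbb{S}) \leq 2\,d_\mathrm{TV}(P_\mathbb{S}, P_\mathbb{S}^{\mathrm{cons},*}) + \max_{\ell \in [F]} R(P_\mathbb{S}, f_\mathbb{S}^{(\ell)}),
\]
by splitting $-f_\mathbb{S}^T p_\mathbb{S} = -f_\mathbb{S}^T(p_\mathbb{S} - p_\mathbb{S}^{\mathrm{cons},*}) - f_\mathbb{S}^T p_\mathbb{S}^{\mathrm{cons},*}$, applying the exact facet formula to the second term, and then undoing the split. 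The facet term stays evaluated at the original $P_\mathbb{S}$ throughout, so no separate Lipschitz step is needed.
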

Proposition~\ref{Prop:L1Projection} shows in particular that when $P_\mathbb{S} \in \mathcal{P}_\mathbb{S}^{\mathrm{cons}}$, the number of essential facets of $\mathcal{P}_\mathbb{S}^{0,*} + \mathcal{P}_\mathbb{S}^{\mathrm{cons},**}$ governs the complexity of the incompatibility index, and we can write $R(P_\mathbb{S})$ in irreducible form as
\[
    R(P_\mathbb{S}) = \max_{\ell \in [F]} R(P_\mathbb{S}, f_\mathbb{S}^{(\ell)})_+.
\]
For general $P_\mathbb{S} \in \mathcal{P}_\mathbb{S}$, Proposition~\ref{Prop:L1Projection} shows that $R(P_\mathbb{S})$ can be expressed as a  maximum of this irreducible part and (up to a multiplicative factor depending only on $|\mathbb{S}|$) a total variation measure of inconsistency that quantifies the distance of $P_\mathbb{S}$ from $\mathcal{P}_\mathbb{S}^{\mathrm{cons}}$. As we will see below, the ideal situation is where we have knowledge of $F$, and we can then exploit this in the construction of powerful tests.  For instance, when $\mathbb{S} = \bigl\{\{1,2\},\{2,3\},\{1,3\}\bigr\}$ and $\mathcal{X}_1 = [r]$, $\mathcal{X}_2 = [s]$ and $\mathcal{X}_3 = [2]$, we have $F = (2^r-2)(2^s-2)$; cf.~Theorem~\ref{Prop:rs2example} and the subsequent discussion.  In more complicated examples, such knowledge may not be readily available, but we will also see, e.g.~in Proposition~\ref{Prop:CutSet} below, that it is nevertheless often possible to find bounds of the form 
\begin{equation}
\label{Eq:Equivalence}
   \max_{\ell \in [F']} R(P_\mathbb{S}, f_\mathbb{S}^{(\ell),'})_+ \leq  R(P_\mathbb{S}) \leq D_R \max_{\ell \in [F']} R(P_\mathbb{S}, f_\mathbb{S}^{(\ell),'})_+
\end{equation}
for some known $D_R>0$, $F' \in \mathbb{N}_0$, $f_\mathbb{S}^{(1),'},\ldots, f_\mathbb{S}^{(F'),'} \in \mathcal{G}_\mathbb{S}^+ \cap [-1,|\mathbb{S}|-1]^{\mathcal{X}_\mathbb{S}}$ and for all $P_\mathbb{S} \in \mathcal{P}_\mathbb{S}^{\mathrm{cons}}$. It then follows from~\eqref{Eq:RDecomp} in the proof of Proposition~\ref{Prop:L1Projection} that, in the upper bound in~\eqref{Eq:RDecompStatement}, we may replace $\max_{\ell \in [F]} R(P_\mathbb{S}, f_\mathbb{S}^{(\ell)})_+$ by $D_R \max_{\ell \in [F']} R(P_\mathbb{S}, f_\mathbb{S}^{(\ell),'})_+$.

Our alternative test rejects $H_0':P_\mathbb{S} \in \mathcal{P}_\mathbb{S}^0$ at the significance level $\alpha \in (0,1)$ if and only if $R(\hat{P}_{\mathbb{S}}) \geq  C_\alpha' \equiv C_\alpha'\bigl(|\mathcal{X}_1|,\ldots,|\mathcal{X}_d|,\mathbb{S},(n_S:S \in \mathbb{S}), D_R, F'\bigr)$, where
\begin{align*}
    C_\alpha' := |\mathbb{S}| \max \biggl\{ \frac{2D_R^2 \log\bigl( \frac{2F'|\mathbb{S}|}{\alpha} \vee 1\bigr)}{ \min_{S \in \mathbb{S}} n_S},  2^{2|\mathbb{S}|+7} \! \!  \max_{\substack{S_1,S_2 \in \mathbb{S}:\\S_1 \neq S_2, S_1 \cap S_2 \neq \emptyset}} \frac{|\mathcal{X}_{S_1 \cap S_2}|\log 2 + \log\bigl(\frac{2|\mathbb{S}|(|\mathbb{S}|-1)}{\alpha}\bigr)}{n_{S_1} \wedge n_{S_2}} \biggr\}^{1/2}. 
\end{align*}
Here, $D_R,F'$ are such that~\eqref{Eq:Equivalence} holds.  If the number of essential facets $F$ of $\mathcal{P}_\mathbb{S}^{0,*} + \mathcal{P}_\mathbb{S}^{\mathrm{cons},**}$ is known, then we may take $F'=F$ and $D_R=1$. 
The following theorem provides size and power guarantees for this test. 
\begin{thm}
\label{Prop:DiscreteTest}
 Fix $\alpha, \beta \in (0,1)$.  Whenever $P_{\mathbb{S}} = (P_S:S \in \mathbb{S}) \in \mathcal{P}_{\mathbb{S}}^0$, we have $\mathbb{P}_{P_{\mathbb{S}}}\bigl(R(\hat{P}_{\mathbb{S}}) \geq C_\alpha'\bigr) \leq \alpha$. Moreover, there exists $M \equiv M(|\mathbb{S}|,D_R)>0$ such that for any $P_{\mathbb{S}} \in \mathcal{P}_\mathbb{S}$ satisfying
\begin{equation}
\label{Eq:BoundedFromNull}
    R(P_{\mathbb{S}}) \geq M(C_\alpha' + C_\beta'),
\end{equation}
we have $\mathbb{P}_{P_{\mathbb{S}}}\bigl(R(\hat{P}_{\mathbb{S}}) \geq  C_\alpha'\bigr) \geq 1-\beta$.
\end{thm}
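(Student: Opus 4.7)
The plan is to reduce the theorem, via the decomposition in Proposition~\ref{Prop:L1Projection} (combined with~\eqref{Eq:Equivalence}), to a finite collection of scalar concentration statements about the empirical process $(\hat{P}_\mathbb{S} - P_\mathbb{S})$. Throughout, I identify $\mathcal{P}_\mathbb{S}$ and $\mathcal{G}_\mathbb{S}^+$ with their vector representations in $[0,1]^{\mathcal{X}_\mathbb{S}}$ and $[-1,\infty)^{\mathcal{X}_\mathbb{S}}$ respectively, and I use the $F'$ functionals $f_\mathbb{S}^{(1),'},\ldots,f_\mathbb{S}^{(F'),'} \in \mathcal{G}_\mathbb{S}^+\cap[-1,|\mathbb{S}|-1]^{\mathcal{X}_\mathbb{S}}$ guaranteed by~\eqref{Eq:Equivalence}. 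The two key deterministic facts I will exploit are: (i) the upper bound in~\eqref{Eq:RDecompStatement} (with $\max_{\ell\in[F]}R(\cdot,f_\mathbb{S}^{(\ell)})_+$ replaced, as the paper notes, by $D_R\max_{\ell\in[F']}R(\cdot,f_\mathbb{S}^{(\ell),'})_+$), applied to $\hat{P}_\mathbb{S}$, which controls $R(\hat{P}_\mathbb{S})$ from above; and (ii) the lower bound in~\eqref{Eq:RDecompStatement}, which controls $R(\hat{P}_\mathbb{S})$ from below by a facet-linear functional and by a total variation inconsistency gap.

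For Type~I error control I take $P_\mathbb{S} \in \mathcal{P}_\mathbb{S}^0$, so $R(P_\mathbb{S},f_\mathbb{S}^{(\ell),'})\leq 0$ for every $\ell$ and $d_\mathrm{TV}\bigl(P_{S_1}^{S_1\cap S_2},P_{S_2}^{S_1\cap S_2}\bigr)=0$ for every pair. Using (i) I have
\[
R(\hat{P}_\mathbb{S}) \leq D_R \max_{\ell \in [F']} \bigl\{R(\hat{P}_\mathbb{S},f_\mathbb{S}^{(\ell),'}) - R(P_\mathbb{S},f_\mathbb{S}^{(\ell),'})\bigr\}_+ + |\mathbb{S}|2^{|\mathbb{S}|+2} \max_{S_1,S_2} d_\mathrm{TV}\bigl(\hat{P}_{S_1}^{S_1\cap S_2},\hat{P}_{S_2}^{S_1\cap S_2}\bigr).
\]
For each fixed $\ell$, the quantity $R(\hat{P}_\mathbb{S},f_\mathbb{S}^{(\ell),'})$ is a sum of $|\mathbb{S}|$ independent empirical means of bounded functions with range $|\mathbb{S}|$; a McDiarmid/Hoeffding bound with bounded-differences constant $1/n_S$ gives a sub-Gaussian tail of order $\sqrt{\sum_{S}1/n_S}$, and a union bound over $\ell\in[F']$ absorbs a $\log F'$ factor, producing the first term in $C_\alpha'$. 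For each ordered pair $(S_1,S_2)$ with $S_1\cap S_2 \neq \emptyset$, I use the triangle inequality $d_\mathrm{TV}(\hat{P}_{S_1}^{S_1\cap S_2},\hat{P}_{S_2}^{S_1\cap S_2}) \leq d_\mathrm{TV}(\hat{P}_{S_1}^{S_1\cap S_2},P_{S_1}^{S_1\cap S_2}) + d_\mathrm{TV}(\hat{P}_{S_2}^{S_1\cap S_2},P_{S_2}^{S_1\cap S_2})$ followed by the standard $L^1$ empirical-measure concentration on an alphabet of size $|\mathcal{X}_{S_1\cap S_2}|$ (controlling each summand by an expectation of order $\sqrt{|\mathcal{X}_{S_1\cap S_2}|/n}$, then applying bounded differences), which, after a union bound over the at most $|\mathbb{S}|(|\mathbb{S}|-1)$ pairs, yields exactly the second term in $C_\alpha'$. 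Splitting the budget $\alpha$ equally between the two contributions gives the desired Type~I bound.

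For the power statement I take $P_\mathbb{S}$ with $R(P_\mathbb{S})\geq M(C_\alpha'+C_\beta')$ and dichotomise on which term in (i) carries most of $R(P_\mathbb{S})$. In case~(A), $D_R\max_{\ell}R(P_\mathbb{S},f_\mathbb{S}^{(\ell),'})_+ \geq R(P_\mathbb{S})/2$, so there is some $\ell^*$ with $R(P_\mathbb{S},f_\mathbb{S}^{(\ell^*),'}) \geq R(P_\mathbb{S})/(2D_R)$; since $R(\hat{P}_\mathbb{S}) \geq R(\hat{P}_\mathbb{S}, f_\mathbb{S}^{(\ell^*),'})$, the same McDiarmid argument as in the size proof (this time at level $\beta$) reduces $R(\hat{P}_\mathbb{S})$ to $R(P_\mathbb{S},f_\mathbb{S}^{(\ell^*),'})$ minus an error of order $C_\beta'/|\mathbb{S}|$, and choosing $M$ appropriately large (depending on $|\mathbb{S}|,D_R$) gives $R(\hat{P}_\mathbb{S})\geq C_\alpha'$. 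In case~(B), $|\mathbb{S}|2^{|\mathbb{S}|+2}\max_{S_1,S_2}d_\mathrm{TV}(P_{S_1}^{S_1\cap S_2},P_{S_2}^{S_1\cap S_2}) \geq R(P_\mathbb{S})/2$; using the lower bound $R(\hat{P}_\mathbb{S}) \geq |\mathbb{S}|^{-1} d_\mathrm{TV}(\hat{P}_{S_1^*}^{S_1^*\cap S_2^*},\hat{P}_{S_2^*}^{S_1^*\cap S_2^*})$ from~\eqref{Eq:RDecompStatement} and the reverse triangle inequality (together with the same $L^1$ concentration), the empirical TV distance stays within $O(C_\beta')$ of its population counterpart, again giving $R(\hat{P}_\mathbb{S})\geq C_\alpha'$ once $M\equiv M(|\mathbb{S}|,D_R)$ is taken large enough.

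The main obstacle, I expect, is bookkeeping rather than any deep idea: one must carefully calibrate the two summands of $C_\alpha'$ so that their sum, amplified by the multiplicative factor $M$ in~\eqref{Eq:BoundedFromNull}, matches the cases of the dichotomy precisely, and in particular absorb the $|\mathbb{S}|2^{|\mathbb{S}|+2}$ factor from the upper bound of Proposition~\ref{Prop:L1Projection} into the $2^{2|\mathbb{S}|+7}$ constant in the $C_\alpha'$ definition. A secondary subtlety is that the lower bound in~\eqref{Eq:RDecompStatement} is phrased in terms of the irreducible facet functionals $f_\mathbb{S}^{(\ell)}$ (not the primed ones), but this does not matter for case~(A) because $R(\hat{P}_\mathbb{S}) \geq R(\hat{P}_\mathbb{S},f)$ for any $f \in \mathcal{G}_\mathbb{S}^+$, so one may simply plug in the specific $f_\mathbb{S}^{(\ell^*),'}$ identified by the dichotomy.
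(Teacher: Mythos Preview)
Your proposal is correct and follows essentially the same route as the paper: use the upper bound in Proposition~\ref{Prop:L1Projection} (in its $D_R,F'$ form via~\eqref{Eq:Equivalence}) to split $R(\hat{P}_\mathbb{S})$ into a facet-functional part and a TV-inconsistency part; bound each by Hoeffding-type concentration plus union bounds to get size control; and, for power, dichotomise on which term of the upper bound applied to $P_\mathbb{S}$ is large, then invoke the lower bound $R(\hat P_\mathbb{S})\ge R(\hat P_\mathbb{S},f_\mathbb{S}^{(\ell^*),'})$ or $R(\hat P_\mathbb{S})\ge |\mathbb{S}|^{-1}d_{\mathrm{TV}}(\hat P_{S_1^*}^{S_1^*\cap S_2^*},\hat P_{S_2^*}^{S_1^*\cap S_2^*})$ from~\eqref{Eq:RDecompStatement}.

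The one place where your execution differs from the paper is the TV-term concentration. You propose to bound $d_{\mathrm{TV}}(\hat P_{S_k}^{S_1\cap S_2},P_{S_k}^{S_1\cap S_2})$ by its expectation (of order $\sqrt{|\mathcal{X}_{S_1\cap S_2}|/n}$) and then apply bounded differences. The paper instead writes $d_{\mathrm{TV}}=\max_{A\subseteq\mathcal{X}_{S_1\cap S_2}}|\hat P(A)-P(A)|$, takes a union bound over all $2^{|\mathcal{X}_{S_1\cap S_2}|}$ subsets $A$, and applies Hoeffding to each indicator. It is this argument that produces the specific form $|\mathcal{X}_{S_1\cap S_2}|\log 2 + \log\bigl(2|\mathbb{S}|(|\mathbb{S}|-1)/\alpha\bigr)$ inside the square root in the definition of $C_\alpha'$. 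Your route yields the same order but a bound of shape $\sqrt{|\mathcal{X}_{S_1\cap S_2}|/n}+\sqrt{\log(\cdot)/n}$, which does not dominate the stated $C_\alpha'$ for all parameter values; so to recover the theorem \emph{with the exact $C_\alpha'$ as defined}, you should switch to the subset-union-bound argument. For the facet part, your direct McDiarmid bound on $R(\hat P_\mathbb{S},f_\mathbb{S}^{(\ell),'})$ is in fact slightly tighter than the paper's pigeonhole-over-$S$ version and matches $C_\alpha'$ without difficulty.
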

Of course, by combining Proposition~\ref{Prop:DiscreteTest1} and Theorem~\ref{Prop:DiscreteTest}, we see that the test that rejects $H_0'$ if $R(\hat{P}_{\mathbb{S}}) \geq  \min( C_\alpha, C_\alpha') =:C_\alpha^\mathrm{min}$ remains of size $\alpha$, so is an improved test that represents the best of both worlds.  By taking $F'=F$ and $D_R=1$, Proposition~\ref{Prop:DiscreteTest1} and Theorem~\ref{Prop:DiscreteTest} now reveal that 
\begin{align*}
    \rho^*(n_\mathbb{S}) &\leq 2\min\bigl( MC_{1/4}' , C_{1/4}\bigr) \\
    &\lesssim_{|\mathbb{S}|} \min \biggl\{ \biggl( \frac{\log (F \vee 1)}{\min_{S \in \mathbb{S}} n_S} + \max_{\substack{S_1,S_2 \in \mathbb{S}:\\S_1 \neq S_2, S_1 \cap S_2 \neq \emptyset}} \frac{|\mathcal{X}_{S_1 \cap S_2}|}{n_{S_1} \wedge n_{S_2}} \biggr)^{1/2} , \max_{S \in \mathbb{S}} \biggl( \frac{|\mathcal{X}_S|}{n_S} \biggr)^{1/2} \biggr\}.
\end{align*}
By McMullen's Upper bound theorem \citep{mcmullen1970maximum}, 
\[
    \log (F \vee 1) \lesssim_{|\mathbb{S}|} \log |\mathcal{X}| \cdot \max_{S \in \mathbb{S}} |\mathcal{X}_S|,
\]
so that, when all sample sizes are of the same order of magnitude, we have $C_\alpha' + C_\beta' \lesssim_{|\mathbb{S}|} (C_\alpha + C_\beta) \cdot \log |\mathcal{X}|$.  When tight bounds on $\log F$ are available, however, we may have that $C_\alpha' + C_\beta'$ is much smaller than $C_\alpha + C_\beta$; see the discussion following Theorem~\ref{Prop:rs2example} below.  

While these quantities are rather abstract, we can simplify them in certain cases. It is known from previous work~\citep[e.g.][]{vlach1986conditions} that when $\mathbb{S}=\bigl\{\{1,2\},\{2,3\},\{1,3\}\bigr\}$ and $\mathcal{X}=[r] \times [s] \times [2]$ for some $r,s \in \mathbb{N}$, the marginal cone induced by the set of compatible measures is given by
\[
    \mathcal{P}_\mathbb{S}^{0,*} = \Bigl\{ P_\mathbb{S} \equiv p_\mathbb{S} \in \mathcal{P}_\mathbb{S}^{\mathrm{cons},*}: \max_{A \subseteq [r], B \subseteq[s]} (-p_{AB \bullet} +p_{A \bullet 1} + p_{\bullet B 1} - p_{\bullet \bullet 1}) \leq 0 \Bigr\},
\]
where, for example, $p_{AB\bullet} := P_{\{1,2\}}(A \times B)$ and $p_{\bullet \bullet 1} := P_{\{1,3\}}([r] \times \{1\}) =P_{\{2,3\}}([s] \times \{1\})$.  However, the extension in the first part of Theorem~\ref{Prop:rs2example} below, which provides an exact expression for the incompatibility index for an arbitrary family of consistent marginal distributions, is new.  The second part provides a representation of $\mathcal{P}_\mathbb{S}^{0,*} + \mathcal{P}_\mathbb{S}^{\mathrm{cons},**}$ as an intersection of $F = (2^r-2)(2^s-2)$ closed halfspaces; thus, $C_\alpha'$ is known exactly, and can be used in our test of compatibility.
\begin{thm}
\label{Prop:rs2example}
Let $\mathbb{S}=\bigl\{\{1,2\},\{2,3\},\{1,3\}\bigr\}$ and $\mathcal{X}=[r] \times [s] \times [2]$ for some $r,s \in \mathbb{N}$.  Then for any $P_\mathbb{S} \in \mathcal{P}_\mathbb{S}^{\mathrm{cons}}$, we have
\begin{equation}
    \label{Eq:RPSd3}
    R(P_\mathbb{S}) = 2 \max_{A \subseteq [r], B \subseteq[s]} (-p_{AB \bullet} +p_{A \bullet 1} + p_{\bullet B 1} - p_{\bullet \bullet 1})_+.
\end{equation}
Moreover,
\[
    \mathcal{P}_\mathbb{S}^{0,*} + \mathcal{P}_\mathbb{S}^{\mathrm{cons},**} = \Bigl\{ P_\mathbb{S} \equiv p_\mathbb{S} \in \mathcal{P}_\mathbb{S}^{\mathrm{cons},*}: \max_{A \subseteq [r], B \subseteq[s]} (-p_{AB \bullet} +p_{A \bullet 1} + p_{\bullet B 1} - p_{\bullet \bullet 1}) \leq 1/2 \Bigr\}. 
\]
\end{thm}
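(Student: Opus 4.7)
Write $\Delta(A,B) := -p_{AB\bullet} + p_{A\bullet 1} + p_{\bullet B 1} - p_{\bullet\bullet 1}$. The plan is to prove the identity \eqref{Eq:RPSd3} first, and to deduce the polyhedral description from it. For the lower bound $R(P_\mathbb{S}) \geq 2\max_{A,B}\Delta(A,B)_+$, I would exhibit, for each pair $(A,B)$, the explicit dual witness
\begin{align*}
f_{\{1,2\}}^{(A,B)} &:= 3\mathbbm{1}_{(A \times B) \cup (A^c \times B^c)} - 1,\\
f_{\{1,3\}}^{(A,B)} &:= 3\mathbbm{1}_{(A^c \times \{1\}) \cup (A \times \{2\})} - 1,\\
f_{\{2,3\}}^{(A,B)} &:= 3\mathbbm{1}_{(B^c \times \{1\}) \cup (B \times \{2\})} - 1.
\end{align*}
A case analysis over $(\mathbbm{1}(x_1 \in A), \mathbbm{1}(x_2 \in B), \mathbbm{1}(x_3=1)) \in \{0,1\}^3$ shows that $\sum_S f_S^{(A,B)}(x_S) \in \{0,6\}$, so $f_\mathbb{S}^{(A,B)} \in \mathcal{G}_\mathbb{S}^+$; a short calculation collapsing the three integrals via consistency of $P_\mathbb{S}$ then gives $\sum_S \int f_S^{(A,B)}\,dP_S = -6\Delta(A,B)$ and hence $R(P_\mathbb{S}, f_\mathbb{S}^{(A,B)}) = 2\Delta(A,B)$.

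For the matching upper bound, I would use LP strong duality, via the substitution $g_S := f_S + 1 \geq 0$, to reformulate $R(P_\mathbb{S}) = 1 - \max_\mu \mu(\mathcal{X})$, where the maximum is taken over subprobability measures $\mu \geq 0$ on $\mathcal{X}$ with $\mu|_S \leq P_S$ for each $S \in \mathbb{S}$. Setting $\Delta^* := \max_{A,B}\Delta(A,B)_+$, it then suffices to construct such a $\mu$ with $\mu(\mathcal{X}) = 1 - 2\Delta^*$. I would do this in two steps: first, for the pair $(A^*, B^*)$ attaining $\Delta^*$, aggregate $P_\mathbb{S}$ to the $\{0,1\}^3$ marginals $\tilde P_S^{ab} := P_S(A^{*(a)} \times B^{*(b)})$, and invoke complementary slackness against $h_\mathbb{S}^{(A^*,B^*)} := (f_\mathbb{S}^{(A^*,B^*)}+1)/3$ to explicitly identify the optimal aggregated $\tilde\mu$, which is supported on the six corners where $\sum_S h_S^{(A^*,B^*)}(x_S) = 1$ and has total mass $1 - 2\Delta^*$; second, lift $\tilde\mu$ to $\mathcal{X}$ by distributing each super-cell mass $\tilde\mu^{abc}$ over the super-cell $A^{*(a)} \times B^{*(b)} \times \{c\}$ so as to respect the 2D marginals of $P_\mathbb{S}$. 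The crucial observation for the lifting is that, once the $|_{12}$ allocation is fixed across the two $x_3$-slices by $\tilde\mu$, the per-$x_1$ and per-$x_2$ feasibility of $|_{13}$ and $|_{23}$ within each slice reduces to 1D allocation problems whose feasibility is guaranteed by the aggregated inequalities and the bound $\Delta^* > 0$; this decoupling relies essentially on $|\mathcal{X}_3|=2$.

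For the second part, the forward inclusion follows from linearity of $\Delta(A,B,\cdot)$, Vlach's inequality $\Delta(A,B,q_\mathbb{S}) \leq 0$ for $q_\mathbb{S} \in \mathcal{P}_\mathbb{S}^{0,*}$, and the identity $\sup_{R_\mathbb{S} \in \mathcal{P}_\mathbb{S}^\mathrm{cons}} \Delta(A,B,R_\mathbb{S}) = 1/2$; the last I would prove by parametrising consistent $R_\mathbb{S}$ via its aggregated moments, using the cell-nonnegativity inequality $R^{12}(A\times B) \geq R^{12}(A\times[s]) + R^{12}([r]\times B) - 1$ to optimise $-p_1 + p_2 + p_3 - q$, and noting that the bound $1/2$ is attained by a Popescu-Rohrlich-style distribution. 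The reverse inclusion follows from part 1: writing $p_\mathbb{S} = \lambda p_{0, \mathbb{S}}$ with $p_{0, \mathbb{S}} \in \mathcal{P}_\mathbb{S}^\mathrm{cons}$ and $\lambda \geq 0$, Theorem~\ref{Thm:DualRepresentation} together with part 1 yields a decomposition $p_{0, \mathbb{S}} = (1 - R(p_{0, \mathbb{S}}))q_{0, \mathbb{S}} + R(p_{0, \mathbb{S}})r_{0, \mathbb{S}}$ with $q_{0, \mathbb{S}} \in \mathcal{P}_\mathbb{S}^0$ and (by linearity) $r_{0, \mathbb{S}} \in \mathcal{P}_\mathbb{S}^\mathrm{cons}$, and the hypothesis $\max_{A,B}\Delta(A,B,p_\mathbb{S}) \leq 1/2$ guarantees $\lambda R(p_{0, \mathbb{S}}) = 2\max_{A,B}\Delta(A,B,p_\mathbb{S})_+ \leq 1$, so $p_\mathbb{S} = \lambda(1-R(p_{0,\mathbb{S}}))q_{0,\mathbb{S}} + \lambda R(p_{0,\mathbb{S}})r_{0,\mathbb{S}} \in \mathcal{P}_\mathbb{S}^{0,*} + \mathcal{P}_\mathbb{S}^{\mathrm{cons},**}$. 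The main obstacle is the lifting step in the upper bound for part 1, where the aggregated $\{0,1\}^3$ solution must be spread over $\mathcal{X}$ while preserving all 2D marginals of $P_\mathbb{S}$; this is where $|\mathcal{X}_3|=2$ is essential, reducing the within-slice matching to a collection of tractable 1D allocation problems.
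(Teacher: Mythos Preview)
Your lower bound for part~1 is correct and coincides with the paper's: the same explicit witnesses $f_\mathbb{S}^{(A,B)}$ (with values $-1$ and $2$) are used, and the calculation collapses to $2\Delta(A,B)$ exactly as you say.

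For the upper bound, your aggregation--then--lifting framework is in fact close in spirit to what the paper does: the paper's two-commodity flow construction can be read as first identifying the six super-cell masses $\tilde\mu^{abc}$ you describe (they appear as $p_{A^*B^*\bullet}$, $p_{A^{*c}B^{*c}\bullet}$, $p_{A^{*c}\bullet 1}$, $p_{A^*\bullet 2}$, $p_{\bullet B^{*c}1}$, $p_{\bullet B^*2}$ in the flow) and then distributing them over $\mathcal{X}$. However, your justification of the lifting step is where the gap lies. The claim that ``the per-$x_1$ and per-$x_2$ feasibility of $|_{13}$ and $|_{23}$ within each slice reduces to 1D allocation problems'' is not correct: within a super-cell such as $A^{*c}\times B^*\times\{1\}$, you must allocate mass $p_{A^{*c}\bullet 1}$ subject to \emph{both} row caps $p_{i\bullet 1}$ and column caps $p_{\bullet j1}-p_{A^*j\bullet}$, \emph{and} the edge capacities $\mu_{ij1}\le p_{ij\bullet}$ (which are further coupled to the slice-$2$ allocation in the same $(i,j)$ cells). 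This is a genuine bipartite transportation problem with finite edge capacities, and the paper handles it via Gale's max-flow min-cut theorem: the feasibility condition becomes a Hall-type inequality over \emph{all} $A'\subseteq A^{*c}$ and $B'\subseteq B^*$, which the paper verifies by rewriting it as $\Delta(A^*\cup A',B')\le\Delta(A^*,B^*)$ and invoking optimality of $(A^*,B^*)$. The remaining two super-cells are then dealt with by a separate maximality argument rather than explicit construction. None of this is ``1D'', and the optimality of $(A^*,B^*)$ over \emph{all} subsets (not just the aggregated ones) is used in an essential way that your sketch does not capture.

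Your argument for part~2 is correct and proceeds somewhat differently from the paper's. The paper argues directly via the LP dual $\max\{1_\mathcal{X}^T p:\mathbb{A}p\le p_\mathbb{S}^*\}$ (re-using the flow computation from part~1, which is homogeneous and so applies to any $p_\mathbb{S}^*\in\mathcal{P}_\mathbb{S}^{\mathrm{cons},*}$), whereas you scale down to $\mathcal{P}_\mathbb{S}^{\mathrm{cons}}$, apply part~1, and lift the decomposition from Theorem~\ref{Thm:DualRepresentation}. Both work; yours is arguably cleaner once part~1 is in hand, while the paper's route avoids separately computing $\sup_{R_\mathbb{S}\in\mathcal{P}_\mathbb{S}^{\mathrm{cons}}}\Delta(A,B,R_\mathbb{S})$.
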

\noindent \textbf{Remark}: In the special case $s=2$, the expression in~\eqref{Eq:RPSd3} simplifies to
\begin{equation}
    \label{Eq:RS3s2}
R(P_{\mathbb{S}}) = 2\max_{j \in [2]} \biggl\{ p_{\bullet j 1} - \sum_{i=1}^r \min(p_{ij \bullet}, p_{i \bullet 1}) \biggr\}_+.
\end{equation}
This can be compared with corresponding expressions in the $d=4$ cases that are given Example~\ref{Prop:4dexamples} and in Proposition~\ref{Prop:4dexampleAnalytic}.


From the expression for $F$ in this case, we see that when $n_{\{1,2\}} = n_{\{2,3\}} = n_{\{1,3\}} = n/3$, we have
\begin{align*}
C_\alpha' +C_\beta' \asymp \biggl\{\frac{r + s + \log\bigl(1/(\alpha \wedge \beta)\bigr)}{n}\biggr\}^{1/2}, \quad C_{\alpha} + C_\beta \asymp \biggl\{\frac{rs + \log\bigl(1/(\alpha \wedge \beta)\bigr)}{n}\biggr\}^{1/2}.
\end{align*}
More generally, as a consequence of  Theorems~\ref{Prop:DiscreteTest} and~\ref{Prop:rs2example}, 
\begin{equation}
\label{Eq:rs2UpperBound}
    \rho^*(n_{\mathbb{S}}) \lesssim \Bigl( \frac{r + s}{n_{\{1,2\}}} \Bigr)^{1/2} + \Bigl( \frac{r}{n_{\{1,3\}}} \Bigr)^{1/2} + \Bigl( \frac{s}{n_{\{2,3\}}} \Bigr)^{1/2}.
\end{equation}

The main challenge in the proof of Theorem~\ref{Prop:rs2example} is to establish~\eqref{Eq:RPSd3}, since the second part then follows using arguments from the proof of Proposition~\ref{Prop:L1Projection}.  Our strategy is to obtain matching lower and upper bounds on $R(P_{\mathbb{S}})$ via the primal and dual formulations~\eqref{Eq:RPS} and~\eqref{Eq:InfR} respectively.  The lower bound requires, for each $A \subseteq [r]$ and $B \subseteq [s]$, a construction of $f_\mathbb{S} \in \mathcal{G}_\mathbb{S}^+$ for which we can compute $R(P_\mathbb{S},f_\mathbb{S})$.  On the other hand, the upper bound relates $R(P_\mathbb{S})$ to the maximum two-commodity flow \citep[][Chapter~17]{ahuja1988network} through a specially-chosen network.  \cite{vlach1986conditions} gives a halfspace representation for $\mathcal{P}_\mathbb{S}^{0,*}$ using the max-flow min-cut theorem for a single-commodity flow through a simpler network; since there is no general max-flow min-cut theorem for two-commodity flows \citep{leighton1999multicommodity}, our proof is more involved.

Theorem~\ref{Prop:rs2lowerbound} below provides a lower bound on the minimax testing radius in the setting of Theorem~\ref{Prop:rs2example}.  
\begin{thm}
\label{Prop:rs2lowerbound}
Let  $\mathbb{S}=\bigl\{\{1,2\},\{2,3\},\{1,3\}\bigr\}$ with  $|\mathcal{X}_1|=r$ for some $r \geq 2$, $|\mathcal{X}_2|=2$ and $|\mathcal{X}_3|=2$.   There exists a universal constant $c > 0$ such that
\[
    \rho^*(n_{\mathbb{S}}) \geq  c \max \biggl\{ \frac{1}{\log r} \wedge \biggl( \frac{r}{(n_{\{1,2\}} \wedge n_{\{1,3\}} ) \log r} \biggr)^{1/2},  \frac{1}{(\min_{S \in \mathbb{S}} n_S)^{1/2}} \biggr\}.
\]
\end{thm}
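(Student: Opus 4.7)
The lower bound is a maximum of two terms, which I would establish separately.

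For $\rho^*(n_\mathbb{S}) \gtrsim (\min_S n_S)^{-1/2}$, I would apply a two-point Le Cam argument. Let $S^* \in \argmin_S n_S$, fix a compatible reference $P^0_\mathbb{S}$ with uniform pair marginals, and let $P^1_\mathbb{S}$ be obtained by perturbing only $P^0_{S^*}$ at total variation $\delta \asymp n_{S^*}^{-1/2}$, choosing the perturbation so it alters the marginal on some $S^* \cap S'$ with $S' \in \mathbb{S}\setminus\{S^*\}$. The total-variation lower bound on $R$ in Proposition~\ref{Prop:L1Projection} yields $R(P^1_\mathbb{S}) \gtrsim n_{S^*}^{-1/2}$, while only $n_{S^*}$ observations witness the perturbation, so tensorisation of Hellinger distance keeps the total-variation distance between the two product sampling distributions bounded away from $1$, precluding consistent testing.

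For the second term, I would use Ingster's $\chi^2$ mixture method with a prior over balanced sign vectors. Take a boundary-compatible null $P^0_\mathbb{S}$ with $p^0_{ij\bullet} = p^0_{i\bullet k} = 1/(2r)$ and $p^0_{\bullet 12} = p^0_{\bullet 21} = 1/2$; this is compatible via $X_1$ uniform on $[r]$ independent of $(X_2,X_3)$ concentrated on $\{X_2 \neq X_3\}$. Index alternatives by $\sigma \in \{-1,+1\}^r$ with $\sum_i\sigma_i = 0$, let $A(\sigma) := \{i:\sigma_i = +1\}$, and define a strongly contextual ``Bell-type'' distribution $Q^\sigma_\mathbb{S}$ by requiring $X_2 = 1 \Leftrightarrow X_1 \in A(\sigma) \Leftrightarrow X_3 = 1$ with the $\{2,3\}$-marginal unchanged; Theorem~\ref{Prop:rs2example} at $(A(\sigma),j=2)$ then gives $R(Q^\sigma_\mathbb{S}) = 1$. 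The alternative $P^\sigma_\mathbb{S} := (1-\rho)P^0_\mathbb{S} + \rho Q^\sigma_\mathbb{S}$ satisfies $R(P^\sigma_\mathbb{S}) = \rho$ by linearity of the criterion combined with Theorem~\ref{Thm:DualRepresentation}.

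The cross-chi-squared divergence factorises: it vanishes for $S = \{2,3\}$ and equals $(\rho^2/r)\langle\chi_\sigma,\chi_{\sigma'}\rangle$ for $S \in \{\{1,2\},\{1,3\}\}$, where $\chi_\sigma(i) = 2\mathbbm{1}_{\sigma_i = +1} - 1$. Writing $N := n_{\{1,2\}} + n_{\{1,3\}}$, this yields
\[
1 + \chi^2\bigl(\mathbb{E}_\sigma \mathbb{P}^{\sigma,\otimes}, \mathbb{P}^{0,\otimes}\bigr) = \mathbb{E}_{\sigma,\sigma'}\bigl(1 + (\rho^2/r) \langle\chi_\sigma,\chi_{\sigma'}\rangle\bigr)^N.
\]
The central technical obstacle is bounding this by a constant at the target scale $\rho^2 \asymp r/(N \log r)$: a naive sub-Gaussian MGF estimate $\mathbb{E}e^{tZ} \leq e^{rt^2/2}$ yields only the weaker rate $\rho \lesssim (r/N^2)^{1/4}$, and the $\sqrt{\log r}$ sharpening requires truncating the prior to $\sigma, \sigma'$ with $|\langle\chi_\sigma,\chi_{\sigma'}\rangle| \leq C\sqrt{r \log r}$ (a high-probability event by Hoeffding's inequality applied to the balanced prior), combined with the elementary pointwise estimate $(1+x)^N \leq e^{Nx}$ on the truncated event and a separate crude bound on the small-probability tail. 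The universal $1/\log r$ cap arises in the very-large-$r$ regime where the above rate would exceed the absolute bound $R \leq 1$; there the required constant separation follows from a direct two-point comparison using a fixed distribution at $R$ close to $1$.
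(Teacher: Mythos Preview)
Your parametric two-point argument for $\rho^*(n_\mathbb{S}) \gtrsim (\min_S n_S)^{-1/2}$ is sound and close to the paper's (the paper stays within consistent families and uses the explicit formula~\eqref{Eq:RS3s2}, but the mechanism is the same).

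The genuine gap is in the main term. Your truncation does not close the $\chi^2$ bound at the claimed scale. On the event $\{|Z|\le C\sqrt{r\log r}\}$ with $Z=\langle\chi_\sigma,\chi_{\sigma'}\rangle$, the estimate $(1+(\rho^2/r)Z)^N\le \exp\bigl(N(\rho^2/r)|Z|\bigr)$ at $\rho^2=r/(N\log r)$ gives an exponent $N\cdot\tfrac{1}{N\log r}\cdot C\sqrt{r\log r}=C\sqrt{r/\log r}\to\infty$; and on the complement the crude bound $r^{-C^2/2}(1+\rho^2)^N\le r^{-C^2/2}e^{r/\log r}$ also diverges, since $r/\log r\gg (C^2/2)\log r$. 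This is not a technicality: any prior that randomises only the \emph{signs} of $p_{i1\bullet}-1/(2r)$ while fixing the magnitudes at $\rho/(2r)$ is detected at rate $\rho\asymp r^{1/4}/\sqrt{N}$ by the $\sigma$-free statistic $\sum_i(\hat p_{i1\bullet}-1/(2r))^2$, so the Ingster bound you compute is tight for this prior class and cannot reach $(r/(N\log r))^{1/2}$.

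The paper takes an essentially different route. For the consistent families it builds, $R(P_\mathbb{S})$ reduces to the $L^1$ functional $\sum_i|p_{i1\bullet}-1/(2r)|+2p_{\bullet 21}-1$, and the lower bound is obtained via the \emph{moment-matching} paradigm (Lepski--Nemirovski--Spokoiny, Cai--Low, Wu--Yang): two priors $\mu_0,\mu_1$ on the values $p_{i1\bullet}$ are constructed to share their first $L\asymp\log r$ moments but to differ in $\int|t-1/(2r)|\,d\mu$ by the best degree-$L$ polynomial approximation error $E_L\asymp 1/L$ to $|\cdot|$. Indistinguishability under Poisson sampling then follows from a paired-Poisson moment-matching bound (Lemma~\ref{Lemma:PoissonLemma}), not a $\chi^2$ computation. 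The $1/\log r$ cap is not a separate argument: it falls out of the same construction in the regime $N\lesssim r\log r$, where the scale parameter $\delta$ saturates at $1/r$ and the separation becomes $r\delta E_L\asymp 1/\log r$.
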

Theorem~\ref{Prop:rs2lowerbound} may be applied in $r \times s \times 2$ tables by noting that $\rho^*$ cannot decrease when $|\mathcal{X}_S|$ increases, for any $S \in \mathbb{S}$. In the main regime of interest where $n_{\{1,2\}} \geq (r + s)\log(r + s), n_{\{1,3\}} \geq r \log r$ and $n_{\{2,3\}} \geq s \log s$, we can conclude that 
\[
\rho^*(n_{\mathbb{S}}) \gtrsim \Bigl( \frac{r + s}{n_{\{1,2\}} \log (r + s)} \Bigr)^{1/2} + \Bigl( \frac{r}{n_{\{1,3\}} \log r} \Bigr)^{1/2} + \Bigl( \frac{s}{n_{\{2,3\}} \log s} \Bigr)^{1/2}.
\]
When compared with our upper bound in~\eqref{Eq:rs2UpperBound}, we see that our improved test is minimax rate-optimal, up to logarithmic factors. 

The proof of Theorem~\ref{Prop:rs2lowerbound} relies on Lemma~\ref{Lemma:PoissonLemma} in Section~\ref{Sec:Proofs}, which provides a bound on the total variation distance between paired Poisson mixtures, and is an extension of both \citet[][Lemma~3]{wu2016minimax} and \citet[][Lemma~32]{jiao2018minimax}.  We remark that the sequences $P_\mathbb{S}$ constructed in our lower bound belong to $\mathcal{P}_\mathbb{S}^{\mathrm{cons}}$; in other words, the same lower bound on the minimax separation rate holds for testing against consistent alternatives.


\subsection{Reductions}
\label{Sec:Reductions}

In this subsection, we show how, for certain $\mathbb{S} \subseteq \mathrm{Pow}([d])$, the incompatibility index $R(P_\mathbb{S})$ can be expressed in terms of $R(P_{\mathbb{S}'})$ for some collection $\mathbb{S}' \subseteq \mathrm{Pow}(\mathcal{J})$, with $\mathcal{J}$ a proper subset of  $[d]$.  Conceptually, such formulae provide understanding of the facet structure of $\mathcal{P}_\mathbb{S}^{0,*} + \mathcal{P}_{\mathbb{S}}^{\mathrm{cons},**}$, which in turn allows us to obtain tighter bounds on the critical values employed in our improved test (cf.~Section~\ref{Sec:ImprovedTest}).  Computationally, these formulae extend the scope of results such as Theorem~\ref{Prop:rs2example} by allowing us to provide explicit expressions for $R(P_\mathbb{S})$ in a wider range of examples.

Our first reduction considers a setting where there exists a subset of variables that are only observed as part of a single observation pattern within our class of possible patterns.  Given $\mathbb{S} \subseteq \mathrm{Pow}([d])$ and $J \subseteq [d]$, we write $\mathbb{S}^{-J} := \{S \cap J^c:S \in \mathbb{S}\}$.
\begin{prop}
\label{Prop:SinglePattern}
Let $\mathbb{S} \subseteq \mathrm{Pow}([d])$, and suppose that $\emptyset \neq J \subseteq [d]$ and $S_0 \in \mathbb{S}$ are such that $J \subseteq S_0$ but $J \cap S = \emptyset$ for all $S \in \mathbb{S} \setminus \{S_0\}$.  Writing $P_{\mathbb{S}}^{-J} := (P_S:S \in \mathbb{S}\setminus S_0,P_{S_0}^{S_0 \cap J^c})$, we have that if $P_{\mathbb{S}}^{-J} \in \mathcal{P}_{\mathbb{S}^{-J}}^{\mathrm{cons}}$, then $P_\mathbb{S} \in \mathcal{P}_{\mathbb{S}}^{\mathrm{cons}}$.  Moreover, regardless of consistency, 
\[
R(P_\mathbb{S}) = R(P_{\mathbb{S}}^{-J}).
\]
\end{prop}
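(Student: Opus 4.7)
The plan has two parts: first I would handle the consistency implication, then the identity $R(P_\mathbb{S}) = R(P_\mathbb{S}^{-J})$ via matching inequalities that correspond feasible tuples in the two variational problems defining the two indices. For consistency, the observation is that since $J \cap S = \emptyset$ for every $S \in \mathbb{S} \setminus \{S_0\}$, any non-empty intersection $S_1 \cap S_2$ with $S_1, S_2 \in \mathbb{S}$ is contained in $J^c$ (trivially if neither is $S_0$; otherwise $S_0 \cap S_2 \subseteq S_0 \cap J^c$). Thus the compatibility constraint $P_{S_1}^{S_1 \cap S_2} = P_{S_2}^{S_1 \cap S_2}$ depends only on $(P_S : S \neq S_0)$ and the reduced marginal $P_{S_0}^{S_0 \cap J^c}$, namely the components of $P_\mathbb{S}^{-J}$, so consistency of $P_\mathbb{S}^{-J}$ forces consistency of $P_\mathbb{S}$.

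For the direction $R(P_\mathbb{S}) \geq R(P_\mathbb{S}^{-J})$, I would \emph{lift} an arbitrary $g_{\mathbb{S}^{-J}} \in \mathcal{G}_{\mathbb{S}^{-J}}^+$ to a feasible $f_\mathbb{S} \in \mathcal{G}_\mathbb{S}^+$ by setting $f_S := g_S$ for $S \in \mathbb{S} \setminus \{S_0\}$ and $f_{S_0}(x_{S_0}) := g_{S_0 \cap J^c}(x_{S_0 \cap J^c})$, so that $f_{S_0}$ is constant in $x_J$. Boundedness, upper semicontinuity, and the lower bound $-1$ are inherited from $g_{\mathbb{S}^{-J}}$ (composition with the continuous projection $\mathcal{X}_{S_0} \to \mathcal{X}_{S_0 \cap J^c}$), and because $\sum_{S \in \mathbb{S}} f_S(x_S)$ depends on $x$ only through $x_{J^c}$, the nonnegativity constraint transfers verbatim. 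A change-of-variable application of Fubini gives $\int f_{S_0}\, dP_{S_0} = \int g_{S_0 \cap J^c}\, dP_{S_0}^{S_0 \cap J^c}$, so $R(P_\mathbb{S}, f_\mathbb{S}) = R(P_\mathbb{S}^{-J}, g_{\mathbb{S}^{-J}})$.

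For the reverse inequality, given any $f_\mathbb{S} \in \mathcal{G}_\mathbb{S}^+$, I would set $\tilde f_{S_0}(x_{S_0 \cap J^c}) := \inf_{x_J \in \mathcal{X}_J} f_{S_0}(x_{S_0 \cap J^c}, x_J)$ and define $g_{S_0 \cap J^c} := \tilde f_{S_0}$, $g_S := f_S$ for $S \neq S_0$. Because $\sum_{S \neq S_0} f_S(x_S)$ does not depend on $x_J$, taking $\inf_{x_J}$ inside $\sum_{S \in \mathbb{S}} f_S(x_S)$ yields exactly $\tilde f_{S_0}(x_{S_0 \cap J^c}) + \sum_{S \neq S_0} f_S(x_S)$, so feasibility of $g_{\mathbb{S}^{-J}}$ is equivalent to feasibility of $f_\mathbb{S}$. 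Combining the pointwise bound $\tilde f_{S_0} \leq f_{S_0}$ with Fubini gives $\int \tilde f_{S_0}\, dP_{S_0}^{S_0 \cap J^c} \leq \int f_{S_0}\, dP_{S_0}$, hence $R(P_\mathbb{S}^{-J}, g_{\mathbb{S}^{-J}}) \geq R(P_\mathbb{S}, f_\mathbb{S})$. The main technical point to verify is that $\tilde f_{S_0}$ is again a bounded upper semicontinuous function with values in $[-1,\infty)$: boundedness and the lower bound are pointwise, and upper semicontinuity holds because each superlevel set $\{x_{S_0 \cap J^c}: \tilde f_{S_0}(x_{S_0 \cap J^c}) \geq \alpha\}$ is the intersection over $x_J$ of the $x_J$-sections of the closed set $\{f_{S_0} \geq \alpha\}$, hence closed.
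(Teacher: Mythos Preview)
Your proof is correct and follows essentially the same approach as the paper's: lift $g_{\mathbb{S}^{-J}}$ to $f_\mathbb{S}$ by making $f_{S_0}$ constant in $x_J$ for one inequality, and collapse $f_{S_0}$ via $\inf_{x_J}$ for the other. Your version is in fact slightly cleaner than the paper's in the reverse direction, since you work with an arbitrary $f_\mathbb{S} \in \mathcal{G}_\mathbb{S}^+$ rather than assuming the supremum in the definition of $R(P_\mathbb{S})$ is attained (which the paper only establishes in the discrete case); your superlevel-set argument for upper semicontinuity of $\tilde f_{S_0}$ is equivalent to the paper's $\limsup$ argument.
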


As an illustration of Proposition~\ref{Prop:SinglePattern} suppose that $\mathcal{X} = [r] \times [s] \times [2] \times [t] \times [u]$ and $\mathbb{S} = \bigl\{\{1,2,4\},\{2,3\},\{1,3,5\}\bigr\}$.  Then $R(P_\mathbb{S}) = R(P_{\mathbb{S}^{-\{4,5\}}})$, and if  $P_\mathbb{S}^{-\{4,5\}} \in \mathcal{P}_{\mathbb{S}^{-\{4,5\}}}^{\mathrm{cons}}$, then
\[
R(P_\mathbb{S}) = 2 \max_{A \subseteq [r],B \subseteq [s]} \bigl(-p_{AB\bullet \bullet \bullet} + p_{A\bullet 1 \bullet \bullet } + p_{\bullet B1 \bullet \bullet} - p_{\bullet \bullet 1 \bullet \bullet}\bigr)_+.
\]
Next, we consider a complementary situation where a subset of variables appears in all of our possible observation patterns.  For the purposes of this result, we will assume that $(\mathcal{X}_j:j \in [d])$ are Polish spaces, so that regular conditional distributions and disintegrations are well-defined (e.g.~\citet[][Chapter~10]{dudley2018real} and \citet[][Lemma~35]{reeve2021optimal}).  Specifically, if $S \subseteq [d]$ and $J \subseteq S$, then there exists a family $(P_{S|x_J}:x_J \in \mathcal{X}_J)$ of probability measures on $\mathcal{X}_{S \cap J^c}$ with the properties that $x_J \mapsto P_{S|x_J}(B)$ is measurable for every measurable $B \subseteq \mathcal{X}_{S \cap J^c}$, and $\int_A P_{S|x_J}(B) \, dP_S^J(x_J) = P_S(A \times B)$ for all $A \in \mathcal{A}_J, B \in \mathcal{A}_{S \cap J^c}$.  We then write $P_{\mathbb{S}|x_J} := (P_{S|x_J}:S \in \mathbb{S})$ for each $x_J \in \mathcal{X}_J$.
\begin{prop}
\label{Prop:EveryPattern}
Let $\mathbb{S} \subseteq \mathrm{Pow}([d])$, and suppose that $J \subseteq [d]$ is such that $J \subseteq S$ for every $S \in \mathbb{S}$.  Suppose further that there exists a distribution $P^J$ on $\mathcal{X}_J$ such that $P_S^J = P^J$ for all $S \in \mathbb{S}$.  Then
\begin{equation}
    \label{Eq:ConditioningInequality}
R(P_\mathbb{S}) \leq \int_{\mathcal{X}_J} R(P_{\mathbb{S}|x_J}) \, dP^J(x_J).
\end{equation}
Moreover, in the discrete case where $\mathcal{X}_j = [m_j]$ for some $m_1,\ldots,m_d \in \mathbb{N} \cup \{\infty\}$, the inequality~\eqref{Eq:ConditioningInequality} is in fact an equality.
\end{prop}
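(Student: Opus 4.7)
My plan is to prove both parts directly from the variational characterisation~\eqref{Eq:RPS} of $R$, exploiting the disintegration $dP_S(x_J, y_{S \cap J^c}) = dP_{S|x_J}(y_{S \cap J^c}) \, dP^J(x_J)$, which is well-defined because $J \subseteq S$ for every $S \in \mathbb{S}$ and $P_S^J = P^J$ is the common marginal.

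For the upper bound~\eqref{Eq:ConditioningInequality}, I would fix any $f_{\mathbb{S}} \in \mathcal{G}_{\mathbb{S}}^+$ and, for each $x_J \in \mathcal{X}_J$, consider the sections $f_S^{x_J}(y_{S \cap J^c}) := f_S(x_J, y_{S \cap J^c})$. Each section is bounded, upper semi-continuous and takes values in $[-1,\infty)$, and they inherit the non-negativity $\sum_{S \in \mathbb{S}} f_S^{x_J}(y_{S \cap J^c}) \geq 0$ for every $y \in \mathcal{X}_{J^c}$, so $(f_S^{x_J})_{S \in \mathbb{S}}$ is an admissible test family for $R(P_{\mathbb{S}|x_J})$. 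Applying the disintegration to each term $\int f_S \, dP_S$ and exchanging sum and integral gives
\[
R(P_\mathbb{S}, f_\mathbb{S}) = \int_{\mathcal{X}_J} R(P_{\mathbb{S}|x_J}, f_\mathbb{S}^{x_J}) \, dP^J(x_J) \leq \int_{\mathcal{X}_J} R(P_{\mathbb{S}|x_J}) \, dP^J(x_J),
\]
and taking the supremum over $f_\mathbb{S} \in \mathcal{G}_\mathbb{S}^+$ yields~\eqref{Eq:ConditioningInequality}.

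To promote this to equality in the discrete case, I would glue optimal sectional test functions into a single $\bar{f}_\mathbb{S} \in \mathcal{G}_\mathbb{S}^+$ that realises the right-hand side. A preliminary observation is that any $f_\mathbb{S} \in \mathcal{G}_\mathbb{S}^+$ may be assumed to satisfy $f_S \leq |\mathbb{S}| - 1$ pointwise: replacing a coordinate with $f_{S_0}(x_{S_0}) > |\mathbb{S}|-1$ by $|\mathbb{S}|-1$ preserves the constraint $\sum_S f_S \geq 0$ (since each of the remaining $|\mathbb{S}|-1$ summands is at least $-1$) and can only decrease $\sum_S \int f_S \, dP_S$, i.e.\ only increase $R(P_\mathbb{S}, f_\mathbb{S})$. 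Hence, for each $x_J$, Proposition~\ref{Prop:L1Projection} supplies a maximiser $f_\mathbb{S}^{x_J,*}$ for $R(P_{\mathbb{S}|x_J})$ with coordinates in $[-1,|\mathbb{S}|-1]$. Defining $\bar{f}_S(x_J, y_{S \cap J^c}) := f_S^{x_J,*}(y_{S \cap J^c})$ produces a uniformly bounded family, and the constraint $\sum_S \bar{f}_S(x_S) \geq 0$ at each $x = (x_J, y_{J^c})$ follows directly from the corresponding property of $f_\mathbb{S}^{x_J,*}$, so $\bar{f}_\mathbb{S} \in \mathcal{G}_\mathbb{S}^+$. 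Applying Fubini in reverse then gives
\[
R(P_\mathbb{S}) \geq R(P_\mathbb{S}, \bar{f}_\mathbb{S}) = \int_{\mathcal{X}_J} R(P_{\mathbb{S}|x_J}, f_\mathbb{S}^{x_J,*}) \, dP^J(x_J) = \int_{\mathcal{X}_J} R(P_{\mathbb{S}|x_J}) \, dP^J(x_J).
\]

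The two delicate points are the uniform $|\mathbb{S}|-1$ bound on optimal test functions, which is what makes $\bar{f}_\mathbb{S}$ admissible after the gluing step, and the measurability of $x_J \mapsto R(P_{\mathbb{S}|x_J}, f_\mathbb{S}^{x_J})$ for the upper bound and of the selection $x_J \mapsto f_\mathbb{S}^{x_J,*}$ for the lower bound. Countability of $\mathcal{X}_J$ in the discrete setting trivialises both issues, which is precisely why equality holds there; in the general Polish setting, measurable selection of maximisers is the obstacle to upgrading~\eqref{Eq:ConditioningInequality} to an equality without further hypotheses.
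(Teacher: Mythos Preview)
Your argument is correct and mirrors the paper's proof: section $f_\mathbb{S}$ over $x_J$ for the upper bound, and glue sectional optimisers for the discrete lower bound. The only wrinkle is that Proposition~\ref{Prop:L1Projection} is stated for finite alphabets, whereas Proposition~\ref{Prop:EveryPattern} permits $m_j=\infty$; the paper closes this by invoking Tychonoff directly on $[-1,|\mathbb{S}|-1]^{\mathcal{X}_{\mathbb{S}^{-J}}}$ in the product topology (with continuity of $R(P_{\mathbb{S}|x_J},\cdot)$ via dominated convergence), a step your uniform $|\mathbb{S}|-1$ bound already sets up.
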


As an application of Proposition~\ref{Prop:EveryPattern}, suppose that $\mathbb{S}=\bigl\{\{1,2,3\},\{1,3,4\},\{1,2,4\}\bigr\}$, where $\mathcal{X}_1=[r],\mathcal{X}_2=[s],\mathcal{X}_3=[t],\mathcal{X}_4=[2]$, and where $P_\mathbb{S} \in \mathcal{P}_\mathbb{S}^{\mathrm{cons}}$. Then Proposition~\ref{Prop:EveryPattern} combined with Theorem~\ref{Prop:rs2example} yields that
\[
    R(P_\mathbb{S}) = 2\sum_{i=1}^r \max_{A \subseteq [s], B \subseteq [t]} ( - p_{i A B \bullet } + p_{i A \bullet 1 } + p_{i \bullet B 1} - p_{i \bullet \bullet 1})_+.
\]
\sloppy This shows that in this setting we can find  $f_\mathbb{S}^{(1)},\ldots,f_\mathbb{S}^{(F)} \in \mathcal{G}_\mathbb{S}^+$ such that $R(P_\mathbb{S}) = \max_{\ell \in [F]} R(P_\mathbb{S},f_\mathbb{S}^{(\ell)})_+$, with $F \leq \{(2^s-2)(2^t-2)\}^r \leq 2^{r(s+t)}$.

Our final reduction result provides good upper and lower bounds on $R(P_\mathbb{S})$ in settings where there exists $J \in \mathbb{S}$ such that $[d]$ can be partitioned into $(I, J,K)$, where every $S \in \mathbb{S}$ is a subset of either $I \cup J$ or $J \cup K$.  As an alternative way of expressing this, if $\mathbb{S}_1,\mathbb{S}_2\subseteq \mathbb{S}$, we say $J \in \mathbb{S}$ is a \emph{cut set} for $\mathbb{S}_1$ and $\mathbb{S}_2$ if $\mathbb{S}_1 \cap \mathbb{S}_2 = \{J\}$ and $(\cup_{S \in \mathbb{S}_1} S) \cap (\cup_{S \in \mathbb{S}_2} S) = J$.

\begin{prop}
\label{Prop:CutSet}
Let $\mathbb{S} \subseteq [d]$, and suppose that $\mathbb{S}_1,\mathbb{S}_2 \subseteq \mathbb{S}$ are such that $J$ is a cut set for $\mathbb{S}_1$ and $\mathbb{S}_2$.  Then for any $P_\mathbb{S} \in \mathcal{P}_\mathbb{S}$, we have
\[
    \max\{ R(P_{\mathbb{S}_1}),  R(P_{\mathbb{S}_2}) \} \leq R(P_\mathbb{S}) \leq  R(P_{\mathbb{S}_1}) + R(P_{\mathbb{S}_2}).
\]
\end{prop}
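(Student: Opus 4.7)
The plan is to use the dual characterisation of the incompatibility index provided by Theorem~\ref{Thm:DualRepresentation}, namely $R(P) = \inf\{\epsilon \in [0,1] : P \in (1-\epsilon)\mathcal{P}_\mathbb{S}^0 + \epsilon \mathcal{P}_\mathbb{S}\}$. The lower bound $\max\{R(P_{\mathbb{S}_1}),R(P_{\mathbb{S}_2})\} \leq R(P_\mathbb{S})$ is the easy direction: it follows from monotonicity of $R$ under restriction of the index collection, since any near-optimal decomposition $P_\mathbb{S} = (1-\epsilon)Q_\mathbb{S}^0 + \epsilon Q_\mathbb{S}^*$ with $Q_\mathbb{S}^0 \in \mathcal{P}_\mathbb{S}^0$ restricts coordinatewise to $P_{\mathbb{S}_i} = (1-\epsilon)Q_{\mathbb{S}_i}^0 + \epsilon Q_{\mathbb{S}_i}^*$, with $Q_{\mathbb{S}_i}^0 \in \mathcal{P}_{\mathbb{S}_i}^0$ because any Fr\'echet-class witness for $\mathbb{S}$ also witnesses compatibility of the sub-collection $\mathbb{S}_i$; letting $\epsilon \downarrow R(P_\mathbb{S})$ delivers the claim.

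For the upper bound, the idea is to glue near-optimal compatible approximations of $P_{\mathbb{S}_1}$ and $P_{\mathbb{S}_2}$ through their common $J$-marginal. Fix $\eta > 0$, write $P_{\mathbb{S}_i} = (1-\epsilon_i)Q_{\mathbb{S}_i}^0 + \epsilon_i Q_{\mathbb{S}_i}^*$ with $\epsilon_i \leq R(P_{\mathbb{S}_i}) + \eta$ and $Q_{\mathbb{S}_i}^0 \in \mathcal{P}_{\mathbb{S}_i}^0$, and suppose $\epsilon_1 + \epsilon_2 < 1$ (else $R(P_\mathbb{S}) \leq 1 \leq \epsilon_1 + \epsilon_2$ is trivial). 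Writing $I := \bigl(\bigcup_{S \in \mathbb{S}_1} S\bigr) \setminus J$ and $K := \bigl(\bigcup_{S \in \mathbb{S}_2} S\bigr) \setminus J$, which are disjoint by the cut-set hypothesis, let $\pi_1^0$ on $\mathcal{X}_{I \cup J}$ and $\pi_2^0$ on $\mathcal{X}_{J \cup K}$ be Fr\'echet witnesses of $Q_{\mathbb{S}_1}^0$ and $Q_{\mathbb{S}_2}^0$, with $J$-marginals $Q_J^{0,(1)}, Q_J^{0,(2)}$, and set
\[
\nu := (1-\epsilon_1)Q_J^{0,(1)} \wedge (1-\epsilon_2)Q_J^{0,(2)}, \qquad \alpha := \nu(\mathcal{X}_J).
\]
Since $(1-\epsilon_i)Q_J^{0,(i)} \leq P_J$ with complementary mass $\epsilon_i$, the identity $2(a\wedge b) = a + b - |a - b|$ combined with the triangle inequality yields $\alpha \geq 1 - \epsilon_1 - \epsilon_2 > 0$.

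Using regular conditional distributions $\pi_i^0(\cdot \mid x_J)$ (available in the discrete setting and under the Polish-space assumption invoked before Proposition~\ref{Prop:EveryPattern}), define the sub-probability measure of total mass $\alpha$,
\[
\pi(dx_I, dx_J, dx_K) := \nu(dx_J)\, \pi_1^0(dx_I \mid x_J)\, \pi_2^0(dx_K \mid x_J),
\]
and let $Q_\mathbb{S}^0$ be the family of $\mathbb{S}$-marginals of the probability measure $\pi/\alpha$, so $Q_\mathbb{S}^0 \in \mathcal{P}_\mathbb{S}^0$ by construction. For any $S \in \mathbb{S}_1$, the $S$-marginal $\pi^S$ equals the $S$-marginal of $\nu(dx_J)\pi_1^0(dx_I \mid x_J) = (d\nu/dQ_J^{0,(1)})(x_J)\,\pi_1^0(dx_I, dx_J)$, and since $d\nu/dQ_J^{0,(1)} \leq 1-\epsilon_1$ pointwise, we obtain $\pi^S \leq (1-\epsilon_1)Q_S^{0,(1)} \leq P_S$ as measures; the symmetric bound holds for $S \in \mathbb{S}_2$, with $S = J$ immediate from $\pi^J = \nu \leq P_J$. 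Hence $P_S - \alpha Q_S^0 \geq 0$ for every $S \in \mathbb{S}$ with common total mass $1 - \alpha$, giving a valid decomposition $P_\mathbb{S} = \alpha Q_\mathbb{S}^0 + (1-\alpha) Q_\mathbb{S}^*$ and so $R(P_\mathbb{S}) \leq 1 - \alpha \leq \epsilon_1 + \epsilon_2 \leq R(P_{\mathbb{S}_1}) + R(P_{\mathbb{S}_2}) + 2\eta$; letting $\eta \to 0$ completes the proof. The main technical subtlety will be the marginal-domination step in the general measurable setting, which relies on the Radon-Nikodym identity above; in the discrete case it reduces to an elementary probability-mass computation.
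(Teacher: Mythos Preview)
Your argument is correct and follows essentially the same route as the paper. For the lower bound you both restrict a near-optimal mixture decomposition of $P_\mathbb{S}$ to $\mathbb{S}_1$ and $\mathbb{S}_2$; for the upper bound you both glue approximately compatible pieces through the common $J$-marginal via conditional independence and the pointwise minimum of the two $J$-marginals, then bound $1-\alpha$ by $\epsilon_1+\epsilon_2$ using $\max\{a,b\}\leq a+b$ for nonnegative $a,b$ (your triangle-inequality step is exactly this identity in disguise).

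The only notable difference is in the technical vehicle: the paper works in the discrete setting with the linear-programming characterisation $R(P_\mathbb{S}) = 1 - \max\{1_\mathcal{X}^T p : p \geq 0,\ \mathbb{A} p \leq p_\mathbb{S}\}$, takes exact maximisers $q_1,q_2$, and defines $q(x) = \min\{q_1^J(x_J),q_2^J(x_J)\}\cdot q_1(x_{I_1})/q_1^J(x_J)\cdot q_2(x_{I_2})/q_2^J(x_J)$, whereas you use Theorem~\ref{Thm:DualRepresentation} directly, take $\eta$-optimal mixture decompositions, and build the glued measure from regular conditional distributions. Your framing is slightly more general (it goes through under the Polish-space hypothesis, not just the discrete one) at the cost of needing disintegrations; the paper's framing is self-contained in the discrete case and avoids the $\eta\to 0$ limit. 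Substantively, however, the two constructions coincide.
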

In Example~\ref{Prop:4dexamples}(ii) below, we give an exact expression for $R(P_\mathbb{S})$ when $P_\mathbb{S} \in \mathcal{P}_\mathbb{S}^\mathrm{cons}$ in the special case where $\mathbb{S}=\bigl\{\{1,2\},\{2,3\},\{1,3\}, \{3,4\}, \{1,4\}\bigr\}$ with $\mathcal{X}_j=[2]$ for all $j \in [4]$.  Here, $\{1,3\}$ is a cut set for $\mathbb{S}_1 = \bigl\{ \{1,2\}, \{2,3\}, \{1,3\}\bigr\}$ and $\mathbb{S}_2 = \bigl\{ \{1,3\}, \{3,4\},\{1,4\} \bigr\}$ (see Figure~\ref{d4}(b)), and our calculations confirm that the conclusion of Proposition~\ref{Prop:CutSet} holds with these choices of $\mathbb{S}_1,\mathbb{S}_2$ and $J$. More generally, when~$\mathbb{S}$ is as above, $\mathcal{X} = [2] \times [r] \times [s] \times [t]$ and $P_\mathbb{S} \in \mathcal{P}_\mathbb{S}^\mathrm{cons}$, we can now see that $\tilde{R}(P_{\mathbb{S}}) \leq R(P_\mathbb{S}) \leq 2\tilde{R}(P_{\mathbb{S}})$, where
\[
    \tilde{R}(P_{\mathbb{S}}) := 2\max\biggl\{\max_{\substack{A \subseteq [r]\\B \subseteq [s]}}(-p_{\bullet AB \bullet } + p_{1A \bullet \bullet } + p_{1 \bullet B \bullet } - p_{1 \bullet \bullet \bullet }) , \max_{\substack{A \subseteq [t]\\B \subseteq [s]}}(-p_{\bullet \bullet B A } + p_{1 \bullet \bullet A } + p_{1 \bullet B \bullet } - p_{1 \bullet \bullet \bullet }) \biggr\}_+.
\]
Thus~\eqref{Eq:Equivalence} holds with $D_R=2$ and $F' =(2^s-2)\{ (2^r-2) + (2^t-2)\} \leq 2^{s + \max(r,t)+1}$, so we can apply our test using the critical value $C_\alpha'$ with these choices. 

\subsection{Computation}
\label{Sec:Computation}

While $C_\alpha$ can be easily calculated for any test of compatibility and allows for a test with power against all incompatible alternatives, we have seen that $C_\alpha'$ can be smaller and lead to more powerful tests. Practical use of $C_\alpha'$ requires knowledge of the number $F$ of essential facets of the polyhedral set $\mathcal{P}_\mathbb{S}^{0,*} + \mathcal{P}_\mathbb{S}^{\mathrm{cons},**}$, or $D_R$ and $F'$ such that~\eqref{Eq:Equivalence} holds. These are fully determined by $\mathbb{S}$ and $\mathcal{X}$, so in principle are known, but these polyhedral sets can be highly complex and explicit expressions for their numbers of essential facets are not generally available. Nevertheless, given particular $\mathbb{S}$ and $\mathcal{X}$, it is possible to compute explicit halfspace representations of $\mathcal{P}_\mathbb{S}^{0,*} + \mathcal{P}_\mathbb{S}^{\mathrm{cons},**}$ using well-developed packages for linear programming. In this section we describe some of the basic geometric concepts involved and how existing algorithms can be used in our setting. As our concern is to describe computational methods, we restrict attention to discrete settings where $|\mathcal{X}| < \infty$, so $\mathcal{P}_\mathbb{S}$ is finite-dimensional.

Existing work mentioned in the introduction has focused on the simpler problem of the computation of the facet structure of $\mathcal{P}_\mathbb{S}^0$, and we begin by describing the approach taken there. 
Starting from the definition of $\mathcal{P}_\mathbb{S}^0$, and writing $p_S(x_S)=P_S(\{x_S\})$ for $(S,x_S) \in \mathcal{X}_\mathbb{S}$, we have
\begin{align*}
    \mathcal{P}_\mathbb{S}^0 &= \{ P_\mathbb{S} \in \mathcal{P}_\mathbb{S} : \mathcal{F}_\mathbb{S}(P_\mathbb{S}) \neq \emptyset \} \\
    &= \biggl\{ P_\mathbb{S} \in \mathcal{P}_\mathbb{S} : \exists p \in [0,1]^\mathcal{X} \text{ s.t. } p_S(x_S) = \sum_{x_{S^c} \in \mathcal{X}_{S^c}} p(x_S,x_{S^c}) \, \forall S \in \mathbb{S}, x_S \in \mathcal{X}_S \biggr\} \\
    & = \{ P_\mathbb{S} \in\mathcal{P}_\mathbb{S} : \exists p \in [0,1]^\mathcal{X} \text{ s.t. } \mathbb{A} p = p_\mathbb{S} \},
\end{align*}
where the matrix $\mathbb{A} = (\mathbb{A}_{(S,y_S),x})_{(S,y_S) \in \mathcal{X}_\mathbb{S},x \in \mathcal{X}} \in \{0,1\}^{\mathcal{X}_{\mathbb{S}} \times \mathcal{X}}$ has entries
\begin{equation}
\label{Eq:A}
    \mathbb{A}_{(S,y_S),x} := \mathbbm{1}_{\{x_S = y_S\}}.
\end{equation}  
Since each column of $\mathbb{A}$ has exactly $|\mathbb{S}|$ entries equal to 1 (one for each $S \in \mathbb{S}$), it follows that any $p \in [0,1]^\mathcal{X}$ with $\mathbb{A} p = p_\mathbb{S}$ satisfies $ 1_\mathcal{X}^T p =|\mathbb{S}|^{-1} 1_{\mathcal{X}_\mathbb{S}}^T \mathbb{A} p = |\mathbb{S}|^{-1} 1_{\mathcal{X}_\mathbb{S}}^T p_\mathbb{S}=1$. 
We can therefore write $\mathcal{P}_{\mathbb{S}}^0$ as the convex hull of the columns of $\mathbb{A}$, with coefficients in the convex combination given by $p$.  In the rest of this section,  we adopt for compactness the convention that if $i \in [2]$, then $\bar{i} := 3 - i$, so that $\{\bar{i}\} = \{1,2\} \setminus \{i\}$.
\begin{example}
Consider the case $d=3$, where $\mathbb{S}=\bigl\{\{1,2\},\{2,3\},\{1,3\}\bigr\}$ and $\mathcal{X}=[2]^3$.  Here we have $|\mathcal{X}|=8,  |\mathcal{X}_\mathbb{S}|=12$ and, if we order the 12 rows according to $(1,1,\bullet),(1,2,\bullet),(2,1,\bullet),(2,2,\bullet)$ for $S=\{1,2\}$, then $(\bullet,1,1),(\bullet,1,2),(\bullet,2,1),(\bullet,2,2)$ for $S=\{2,3\}$, then $(1,\bullet,1),(2,\bullet,1),(1,\bullet,2),(2,\bullet,2)$ for $S=\{1,3\}$, we have
\begin{align*}
\setcounter{MaxMatrixCols}{20}
    \mathbb{A}^T = \begin{pmatrix} 
    1 & 0 & 0 & 0 & 1 & 0 & 0 & 0 & 1 & 0 & 0 & 0 \\
    1 & 0 & 0 & 0 & 0 & 1 & 0 & 0 & 0 & 0 & 1 & 0 \\
    0 & 1 & 0 & 0 & 0 & 0 & 1 & 0 & 1 & 0 & 0 & 0 \\
    0 & 1 & 0 & 0 & 0 & 0 & 0 & 1 & 0 & 0 & 1 & 0 \\
    0 & 0 & 1 & 0 & 1 & 0 & 0 & 0 & 0 & 1 & 0 & 0 \\
    0 & 0 & 1 & 0 & 0 & 1 & 0 & 0 & 0 & 0 & 0 & 1 \\
    0 & 0 & 0 & 1 & 0 & 0 & 1 & 0 & 0 & 1 & 0 & 0 \\
    0 & 0 & 0 & 1 & 0 & 0 & 0 & 1 & 0 & 0 & 0 & 0
    \end{pmatrix}.
\end{align*}
In this case, the polytope $\mathcal{P}_\mathbb{S}^0$ has 16 facets; of these, 12 correspond to the simple non-negativity conditions $p_\mathbb{S} \geq 0$, while the remaining four essential facets are given by $p_{i,\bar{j},\bullet} + p_{\bullet j 2} + p_{\bar{i}, \bullet, 1} \leq 1$ for $i,j \in [2]$ \citep{vlach1986conditions,eriksson2006polyhedral}. More generally, when $\mathcal{X}=[r] \times [s] \times [2]$ for some $r,s \in \mathbb{N}$, the marginal polytope $\mathcal{P}_\mathbb{S}^0$ has $(2^r-2)(2^s-2) + rs + 2(r+s)$ facets, with $rs +2(r+s)$ of these corresponding to simple non-negativity conditions.
\end{example}
We now turn to the problem of computing the number of essential facets of the polyhedral set $\mathcal{P}_\mathbb{S}^{0,*} + \mathcal{P}_\mathbb{S}^{\mathrm{cons},**}$, which is of more direct relevance in our context. As we see from Theorem~\ref{Prop:rs2example} and the example above, in the special case $\mathbb{S}=\bigl\{\{1,2\},\{2,3\},\{1,3\}\bigr\}$ and $\mathcal{X}=[r] \times [s] \times [2]$, the structure of $\mathcal{P}_\mathbb{S}^{0,*} + \mathcal{P}_\mathbb{S}^{\mathrm{cons},**}$ is similar to that of $\mathcal{P}_\mathbb{S}^0$; indeed, both polyhedral sets have the same numbers of essential and non-essential facets. However, the facet structure of $\mathcal{P}_\mathbb{S}^{0,*} + \mathcal{P}_\mathbb{S}^{\mathrm{cons},**}$ is generally more complicated than that of $\mathcal{P}_\mathbb{S}^0$; Example~\ref{Prop:4dexamples} reveals that when $d=4$ all irreducible choices of $\mathbb{S}$  except the simple chain pairs case exhibit this difference.  The Minkowski sum $\mathcal{P}_\mathbb{S}^{0,*} + \mathcal{P}_\mathbb{S}^{\mathrm{cons},**}$ is the convex hull of a set of directions (the columns of $\mathbb{A}$) and a set of points (the vertices of $\mathcal{P}_\mathbb{S}^{\mathrm{cons}}$, together with the origin).  Moreover, a halfspace representation of $\mathcal{P}_\mathbb{S}^{\mathrm{cons}}$ is given by
\begin{align*}
    \mathcal{P}_\mathbb{S}^{\mathrm{cons}} = \biggl\{ &p_\mathbb{S}  \in [0,\infty)^{\mathcal{X}_\mathbb{S}} : \sum_{x_S \in \mathcal{X}_S} p_S(x_S) = 1 \ \forall S \in \mathbb{S},   \\ &\sum_{x_{S_1 \cap S_2^c}\in \mathcal{X}_{S_1 \cap S_2^c}} p_{S_1}(x_{S_1}) - \sum_{x_{S_1^c \cap S_2}\in\mathcal{X}_{S_1^c \cap S_2}} p_{S_2}(x_{S_2}) = 0 \, \forall x_{S_1 \cap S_2} \in \mathcal{X}_{S_1\cap S_2}, S_1,S_2 \in \mathbb{S}  \biggr\},
\end{align*}
and we can convert this to a vertex representation using software such as the \texttt{rcdd} package in~\texttt{R} \citep{rcdd2021}. In fact, as shown by Proposition~\ref{Prop:FullDim}, the equality constraints of $\mathcal{P}_\mathbb{S}^\mathrm{cons}$ can be extracted from the equality constraints in the halfspace representation of $\mathcal{P}_\mathbb{S}^0$, a fact we use in our computations. The vertex representations of $\mathcal{P}_\mathbb{S}^0$ and $\mathcal{P}_\mathbb{S}^\mathrm{cons}$ lead to a vertex representation of the sum $\mathcal{P}_\mathbb{S}^{0,*} + \mathcal{P}_\mathbb{S}^{\mathrm{cons},**}$ that can then be converted back to a halfspace representation, again using software such as \texttt{rcdd}.  The value of $F$ is then given by the number of halfspaces in this representation, once we subtract the number of halfspaces defining $\mathcal{P}_\mathbb{S}^\mathrm{cons}$. 

To illustrate this computational approach, we find explicit expressions for $R(P_\mathbb{S})$ with $P_\mathbb{S} \in \mathcal{P}_\mathbb{S}^\mathrm{cons}$, for all irreducible four-dimensional examples with binary variables.  If $[d] \in \mathbb{S}$, then $\mathcal{P}_\mathbb{S}^{0} = \mathcal{P}_\mathbb{S}^{\mathrm{cons}}$ so $F=0$ and $R(P_{\mathbb{S}}) =0$ for $P_\mathbb{S} \in \mathcal{P}_\mathbb{S}^\mathrm{cons}$.  We are therefore more interested in situations where $[d] \notin \mathbb{S}$, and where compatibility is not equivalent to consistency.  By a combination of Propositions~\ref{Prop:SinglePattern} and~\ref{Prop:EveryPattern}, the set of possible irreducible observation patterns~$\mathbb{S}$ in the case $d=4$ with $[d] \notin \mathbb{S}$ is the following:
\begin{itemize}
    \item[$\bullet$] Chain pairs: $\mathbb{S} = \bigl\{\{1,2\},\{2,3\},\{3,4\},\{1,4\}\bigr\}$;
    \item[$\bullet$] All pairs except one: $\mathbb{S} = \bigl\{\{1,2\},\{1,3\},\{1,4\},\{2,3\},\{3,4\}\bigr\}$;
    \item[$\bullet$] All pairs: $\mathbb{S} = \bigl\{\{1,2\},\{1,3\},\{1,4\},\{2,3\},\{2,4\},\{3,4\}\bigr\}$;
    \item[$\bullet$] Single triple: $\mathbb{S} = \bigl\{\{1,2,3\},\{1,4\},\{2,4\},\{3,4\}\bigr\}$;
    \item[$\bullet$] All triples: $\mathbb{S} = \bigl\{\{1,2,3\},\{1,2,4\},\{1,3,4\},\{2,3,4\}\bigr\}$.
\end{itemize}
These patterns are illustrated in Figure~\ref{d4}.
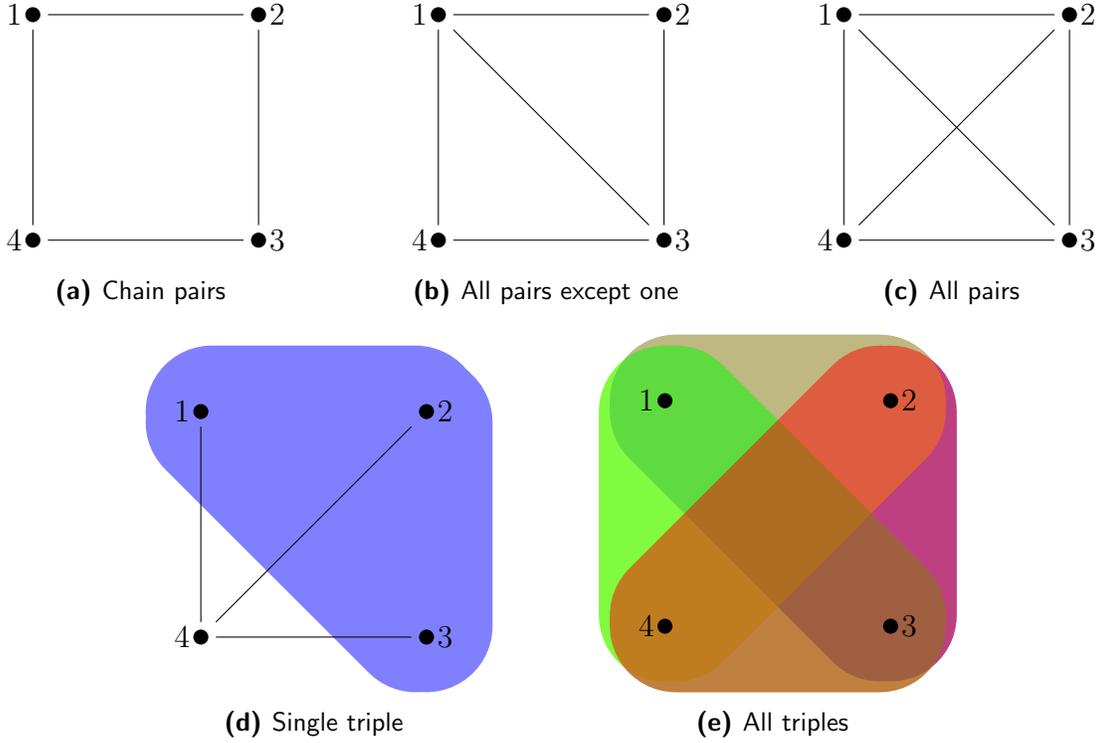
\begin{figure}
    \centering
    \subfigure[Chain pairs]{
    \begin{tikzpicture}
  \node (v1) at (0,0) {};
    \node (v2) at (3,0) {};
    \node (v3) at (3,-3) {};
    \node (v4) at (0,-3) {};
    \fill (v1) circle (0.1) node [left] {$1$};
    \fill (v2) circle (0.1) node [right] {$2$};
    \fill (v3) circle (0.1) node [right] {$3$};
    \fill (v4) circle (0.1) node [left] {$4$};
    \draw (0.2,0) -- (2.8,0);
    \draw (3,-0.2) -- (3,-2.8);
    \draw (2.8,-3) -- (0.2,-3);
    \draw (0,-2.8) -- (0,-0.2);
\end{tikzpicture}}
\hspace{1cm}
\subfigure[All pairs except one]{
    \begin{tikzpicture}
\node (v1) at (0,0) {};
    \node (v2) at (3,0) {};
    \node (v3) at (3,-3) {};
    \node (v4) at (0,-3) {};
    \fill (v1) circle (0.1) node [left] {$1$};
    \fill (v2) circle (0.1) node [right] {$2$};
    \fill (v3) circle (0.1) node [right] {$3$};
    \fill (v4) circle (0.1) node [left] {$4$};
    \draw (0.2,0) -- (2.8,0);
    \draw (3,-0.2) -- (3,-2.8);
    \draw (2.8,-3) -- (0.2,-3);
    \draw (0,-2.8) -- (0,-0.2);
    \draw (0.2,-0.2) -- (2.8,-2.8);
\end{tikzpicture}}
\hspace{1cm}
\subfigure[All pairs]{
    \begin{tikzpicture}
\node (v1) at (0,0) {};
    \node (v2) at (3,0) {};
    \node (v3) at (3,-3) {};
    \node (v4) at (0,-3) {};
    \fill (v1) circle (0.1) node [left] {$1$};
    \fill (v2) circle (0.1) node [right] {$2$};
    \fill (v3) circle (0.1) node [right] {$3$};
    \fill (v4) circle (0.1) node [left] {$4$};
    \draw (0.2,0) -- (2.8,0);
    \draw (3,-0.2) -- (3,-2.8);
    \draw (2.8,-3) -- (0.2,-3);
    \draw (0,-2.8) -- (0,-0.2);
    \draw (0.2,-0.2) -- (2.8,-2.8);
    \draw (0.2,-2.8) -- (2.8,-0.2);
\end{tikzpicture}}
\subfigure[Single triple]{
    \begin{tikzpicture}
\node (v1) at (0,0) {};
    \node (v2) at (3,0) {};
    \node (v3) at (3,-3) {};
    \node (v4) at (0,-3) {};
    \fill (v1) circle (0.1) node [left] {$1$};
    \fill (v2) circle (0.1) node [right] {$2$};
    \fill (v3) circle (0.1) node [right] {$3$};
    \fill (v4) circle (0.1) node [left] {$4$};
    \draw (2.8,-3) -- (0.2,-3);
    \draw (0,-2.8) -- (0,-0.2);
    \draw (0.2,-2.8) -- (2.8,-0.2);
    \begin{pgfonlayer}{background}
\draw[edge,color=blue] (v1) -- (v2) -- (v3) -- (v1);
\end{pgfonlayer}
\end{tikzpicture}}
\hspace{1cm}
\subfigure[All triples]{
    \begin{tikzpicture}
\node (v1) at (0,0) {};
    \node (v2) at (3,0) {};
    \node (v3) at (3,-3) {};
    \node (v4) at (0,-3) {};
    \fill (v1) circle (0.1) node [left] {$1$};
    \fill (v2) circle (0.1) node [right] {$2$};
    \fill (v3) circle (0.1) node [right] {$3$};
    \fill (v4) circle (0.1) node [left] {$4$};
    \begin{pgfonlayer}{background}
\draw[edge,color=blue] (v1) -- (v2) -- (v3) -- (v1);
\draw[edge,color=yellow] (v1) -- (v2) -- (v4) -- (v1);
\draw[edge,color=green] (v1) -- (v3) -- (v4) -- (v1);
\draw[edge,color=red] (v2) -- (v3) -- (v4) -- (v2);
\end{pgfonlayer}
\end{tikzpicture}}
\caption{\label{d4}Irreducible observation patterns $\mathbb{S}$ with $d=4$.}
\end{figure}
  
\begin{example}
\label{Prop:4dexamples}
Let $\mathcal{X}_1=\mathcal{X}_2=\mathcal{X}_3=\mathcal{X}_4=[2]$.  For $P_\mathbb{S} \in \mathcal{P}_\mathbb{S}^\mathrm{cons}$, the following statements hold:
\begin{itemize}
    \item[(i)] When $\mathbb{S}=\bigr\{\{1,2\},\{2,3\},\{3,4\},\{1,4\}\bigr\}$,
    \begin{align}
    \label{Eq:ChainStatement}
        R(P_\mathbb{S}) &= 2 \max_{k,\ell \in [2]} \biggl\{ p_{\bullet \bullet k \ell} - p_{\bullet 2 k \bullet} - \sum_{i=1}^2 \min(p_{i1 \bullet \bullet}, p_{i \bullet \bullet \ell}) \biggr\}_+ \\
        & = 2 \max_{i,j,k \in [2]}(p_{ij \bullet \bullet} - p_{\bullet j k \bullet } - p_{\bullet \bullet \bar{k} 1} - p_{i \bullet \bullet 2})_+. \nonumber
    \end{align}
    From the second representation, we see that we may take $F=8$. In fact, in this example the facet structure of $\mathcal{P}_\mathbb{S}^0$ is again closely related to the facet structure of $\mathcal{P}_\mathbb{S}^{0,*} + \mathcal{P}_\mathbb{S}^{\mathrm{cons},**}$; indeed, by \citet[][Theorem~3.5]{hocsten2002grobner},
    \[
        \mathcal{P}_\mathbb{S}^0 = \Bigl\{ P_\mathbb{S} \in \mathcal{P}_\mathbb{S}^\mathrm{cons} : \max_{i,j,k\in [2]} (p_{ij \bullet \bullet} - p_{\bullet j k \bullet } - p_{\bullet \bullet \bar{k}, 1} - p_{i \bullet \bullet 2}) \leq 0 \Bigr\}
    \]
    is a non-redundant halfspace representation. We give an analytic extension of~\eqref{Eq:ChainStatement} to $\mathcal{X}_1=[r]$ for general $r \in \mathbb{N}$ in Proposition~\ref{Prop:4dexampleAnalytic} of Section~\ref{Sec:Proofs}.
    \item[(ii)] When $\mathbb{S} =\bigr\{\{1,2\},\{2,3\},\{1,3\},\{3,4\},\{1,4\}\bigr\}$,
    \begin{align*}
        R(P_\mathbb{S}) &= 2 \max\biggl[0,\max_{j\in[2]} \biggl\{ p_{\bullet j 1 \bullet } \! - \! \sum_{i=1}^2 \! \min(p_{ij \bullet \bullet }, p_{i \bullet 1 \bullet } ) \biggr\}, \max_{\ell \in[2]} \biggl\{ p_{\bullet \bullet  1 \ell } \!-\! \sum_{i=1}^2 \! \min(p_{i \bullet 1 \bullet }, p_{i \bullet  \bullet \ell} ) \biggr\}, \\
        & \hspace{100pt} \max_{i,j,\ell \in [2]}( p_{\bullet \bullet 1 \ell } - p_{i j \bullet \bullet } - p_{\bar{i} \bullet \bullet \ell } - p_{\bullet \bar{j} 1 \bullet }) \biggr] \\
        &= \max\bigl\{ R(P_\mathbb{S}^{123}), R(P_\mathbb{S}^{134}), R(P_{\mathbb{S} \setminus \{\{1,3\}\}}) \bigr\}.
    \end{align*}
    Here, we write, e.g., $R(P_\mathbb{S}^{123})$ instead of $R(P_\mathbb{S}^{\{1,2,3\}})$ for notational simplicity.  This is a simple example where $\mathcal{P}_\mathbb{S}^{0,*} + \mathcal{P}_\mathbb{S}^{\mathrm{cons},**}$ has a more complex facet structure than that of  $\mathcal{P}_\mathbb{S}^0$.  Indeed, Proposition~\ref{Prop:CutSet} shows that $\max\bigl\{ R(P_\mathbb{S}^{123}), R(P_\mathbb{S}^{134}) \bigr\} \leq R(P_\mathbb{S}) \leq R(P_\mathbb{S}^{123}) +  R(P_\mathbb{S}^{134})$, and hence that $P_\mathbb{S}$ is compatible if and only if $P_\mathbb{S}^{123}$ and $P_\mathbb{S}^{134}$ are compatible. On the other hand, writing
    \begin{equation}
    \label{Eq:4dimReduction}
        p_{\bullet \bullet 1 \ell } - p_{i j \bullet \bullet } - p_{\bar{i} \bullet \bullet \ell } - p_{\bullet \bar{j} 1 \bullet } = (p_{\bullet \bullet 1 \ell } - p_{i \bullet 1 \bullet } - p_{\bar{i} \bullet \bullet \ell }) + (p_{\bullet j 1 \bullet } - p_{i j \bullet \bullet } - p_{\bar{i} \bullet 1 \bullet }),
    \end{equation}
    shows that our expressions for $R(P_\mathbb{S})$ are non-redundant: $\mathcal{P}_\mathbb{S}^{0,*} + \mathcal{P}_\mathbb{S}^{\mathrm{cons},**}$ has $F=16$ essential facets, while $\mathcal{P}_\mathbb{S}^0$ only has $8$.
    \item[(iii)] When $\mathbb{S} = \bigr\{\{1,2\},\{1,3\},\{1,4\},\{2,3\},\{2,4\},\{3,4\}\bigr\}$,
    \begin{align}
    \label{Eq:Chain}
        R(P_\mathbb{S}) = \max \biggl[& R(P_\mathbb{S}^{123}), R(P_\mathbb{S}^{124}),R(P_\mathbb{S}^{134}), R(P_\mathbb{S}^{234}), \nonumber \\
        &2 \max_{i,j,k,\ell \in [2]} ( -p_{ij \bullet \bullet } - p_{\bullet jk \bullet } - p_{i \bullet k \bullet } + p_{i \bullet \bullet \ell } + p_{\bullet j \bullet \ell } + p_{\bullet \bullet k \ell } - p_{\bullet \bullet \bullet \ell} ), \nonumber \\
        &R(P_{\mathbb{S} \setminus \{\{1,3\},\{2,4\}\}}), R(P_{\mathbb{S} \setminus \{ \{1,4\}, \{2,3\} \}}), R(P_{\mathbb{S} \setminus \{\{1,2\},\{3,4\}}) \biggr].
    \end{align}
    Here, we see from the \texttt{R} code output that $P_\mathbb{S}$ is compatible if and only if the first two lines of~\eqref{Eq:Chain} are non-positive. The final line can be bounded above by twice the first line: as in~\eqref{Eq:4dimReduction}, we have, for example, that
    \[
        R(P_{\mathbb{S} \setminus \{\{1,3\},\{2,4\}\}}) =\max_{i,j,\ell \in [2]}( p_{\bullet \bullet 1 \ell } - p_{i j \bullet \bullet } - p_{\bar{i} \bullet \bullet \ell } - p_{\bullet \bar{j} 1 \bullet })_+ \leq R(P_\mathbb{S}^{134}) + R(P_\mathbb{S}^{123}).
    \]
    We see from~\eqref{Eq:Chain} that we may take $F=4\times 4 + 16 + 3 \times 8 = 56$. In the $\texttt{R}$ output, the halfspace representation of $\mathcal{P}_\mathbb{S}^{0,*} + \mathcal{P}_\mathbb{S}^{\mathrm{cons},**}$ has 93 rows, 13 of which are equality constraints coming from the consistency conditions, 24 of which are non-negativity constraints, and the remaining 56 correspond to essential facets reflected in our expression $R(P_\mathbb{S})$ above. Here $\mathcal{P}_\mathbb{S}^0$ has 32 essential facets.
    \item[(iv)] When $\mathbb{S} = \bigl\{ \{1,2,3\}, \{1,4\}, \{2,4\}, \{3,4\} \bigr\}$, we have compatibility if and only if $P_\mathbb{S}^{124},P_\mathbb{S}^{134},P_\mathbb{S}^{234}$ are compatible and
    \[
        \tilde{p}_{ijk \ell} := p_{ijk \bullet } + p_{\bar{i} \bullet \bullet \ell } + p_{\bullet \bar{j} \bullet \ell } + p_{\bullet \bullet \bar{k} \ell } - p_{\bullet \bullet \bullet \ell } \geq 0
    \]
    for all $i,j,k,\ell \in [2]$. These conditions are non-redundant, so that $\mathcal{P}_\mathbb{S}^0$ has $3 \times 4 + 16 = 28$ essential facets. Further,
    \begin{align*}
        &R(P_\mathbb{S})= \max\biggl[ R(P_\mathbb{S}^{124}), R(P_\mathbb{S}^{134}), R(P_\mathbb{S}^{234}), -\frac{3}{2} \min_{i,j,k,\ell \in [2]} \tilde{p}_{ijk \ell}, -\min_{i,j,k,\ell \in [2]}( \tilde{p}_{ijk \ell} + \tilde{p}_{\bar{i}\bar{j}\bar{k}\bar{\ell}})  \\
        & - \min_{i,j,k,\ell \in [2]} \bigl\{\tilde{p}_{\bar{i}\bar{j}\bar{k}\bar{\ell}}  + \min(p_{i j \bullet \bullet } - p_{i \bullet \bullet \ell} + p_{\bullet \bar{j} \bullet \ell }, p_{\bullet jk \bullet } - p_{\bullet \bullet k \ell } +p_{\bullet \bar{j} \bullet \ell } , p_{i \bullet k \bullet } - p_{\bullet \bullet k  \ell } + p_{\bar{i} \bullet \bullet \ell } ) \bigr\} \biggr],
    \end{align*}
    so that we may take $F=3 \times 4 + 2 \times 16 + 3 \times 16 = 92$. From the above expression it also follows that 
    \[
        R(P_\mathbb{S}) \leq \max\bigl\{ R(P_\mathbb{S}^{124}), R(P_\mathbb{S}^{134}), R(P_\mathbb{S}^{234}) \bigr\} + 2 \max_{i,j,k,\ell \in [2]}(-\tilde{p}_{ijk\ell})_+.
    \]
    \item[(v)] When $\mathbb{S} = \bigl\{ \{1,2,3\}, \{1,2,4\}, \{1,3,4\}, \{2,3,4\} \bigr\}$ the marginal polytope $\mathcal{P}_\mathbb{S}^0$ has 32 essential facets. Indeed, writing $\mathbb{S}_J := \mathbb{S} \setminus ([4] \setminus \{J\})$, we have that $P_\mathbb{S}$ is compatible if and only if $P_{\mathbb{S}_J|x_J}$ is compatible for all $J \in [4]$ and $x_J \in [2]$.  Moreover,
    \begin{align*}
        R(P_\mathbb{S}) &= \frac{1}{2}\max_{J \in [4]} \max_{x_J \in [2]}\bigl\{ 3p^J(x_J) R(P_{\mathbb{S}_J | x_J}) + p^J(\bar{x}_J) R(P_{\mathbb{S}_J | \bar{x}_J}) \bigr\} \\
        & \leq 2 \max_{J \in [4]} \max_{x_J \in [2]} p^J(x_J) R(P_{\mathbb{S}_J | x_J}),
    \end{align*}
    and $F=4 \times 2 \times 4 \times 4 = 128$. To see where these numbers come from, consider $J=1$ and $x_J=1$, and note that
    \begin{align*}
        &\frac{1}{2} \{3p^1(1) R(P_{\mathbb{S}_1 | 1}) + p^1(2) R(P_{\mathbb{S}_1 | 2})\} \\
        & = -\min \Bigl\{ 0, 3 \min_{j,k \in [2]} ( p_{1jk \bullet } + p_{1, \bar{j}\bullet 1} - p_{1 \bullet k 1 }) \Bigr\} - \min\Bigl\{ 0, \min_{j',k' \in [2]} (p_{2j'k' \bullet } + p_{2, \bar{j}'\bullet 2} - p_{2 \bullet k' 2 }) \Bigr\}.
    \end{align*}
    This is the maximum of $5 \times 5$ linear functionals of $P_{\mathbb{S}_J}$, but all those where $0$ is chosen in the first term are redundant in the final expression for $R(P_\mathbb{S})$, as are all those where $(j',k')=(\bar{j},\bar{k})$. Thus, for each value of $(J,x_J)$, there are $4 \times 4$ non-redundant essential facets.
\end{itemize}
\end{example}

\section{Mixed discrete and continuous variables}
\label{Sec:Continuous}

In this section, we consider a setting of mixed discrete and continuous variables, where there exist positive integers $d_0 \leq d$ such that $\mathcal{X} = [0,1)^{d_0} \times \prod_{j=d_0+1}^d [m_j]$, with $m_1,\ldots,m_d \in \mathbb{N} \cup \{\infty\}$.  We assume that we observe independent random variables $(X_{S,i}:S \in \mathbb{S},i \in [n_S])$, with $X_{S,i} \sim P_S$ taking values in $\mathcal{X}_S := [0,1)^{S \cap [d_0]} \times \prod_{j \in S \cap ([d] \setminus [d_0])} [m_j]$.  Given a vector of bandwidths $h = (h_1,\ldots,h_{d_0}) \in (0,\infty)^{d_0}$, we partition $[0,1)^{d_0} \times \prod_{j \in [d] \setminus [d_0]} [m_j]$ as 
\[
[0,1)^{d_0} \times \prod_{j \in [d] \setminus [d_0]} [m_j] = \bigcup_{(k_1,\ldots,k_d) \in \mathcal{K}_h} \biggl(\prod_{j=1}^{d_0} I_{h_j,k_j} \times \prod_{j \in [d] \setminus [d_0]} \{k_j\}\biggr),
\]
where $\mathcal{K}_h := [\lceil 1/h_1 \rceil] \times \cdots \times [\lceil 1/h_{d_0} \rceil] \times \prod_{j \in [d] \setminus [d_0]} [m_j]$ and
\[
I_{h_j,k_j} := \bigl[(k_j-1)h_j,(k_jh_j) \wedge 1\bigr).
\]
Let $\mathcal{G}_{\mathbb{S},h}^+$ denote the set of sequences of functions $(f_S:S \in \mathbb{S})$ where each $f_S:\mathcal{X}_S \rightarrow [-1,\infty)$ is piecewise constant on all sets of the form $\prod_{j \in S \cap [d_0]} I_{h_j,k_j} \times \prod_{j \in S \cap ([d] \setminus [d_0])} \{k_j\}$ and where the sequence satisfies
\[
\inf_{x \in \mathcal{X}} \sum_{S \in \mathbb{S}} f_S(x_S) \geq 0.
\]
We further define
\[
R_h(P_\mathbb{S}) := \sup_{f_\mathbb{S} \in \mathcal{G}_{\mathbb{S},h}^+} R(P_\mathbb{S},f_\mathbb{S}).
\]
Recalling our definition of $C_\alpha^\mathrm{min}$ from Section~\ref{Sec:ImprovedTest}, in this mixed continuous and discrete setting, we reject the null hypothesis that $P_\mathbb{S} = (P_S:S \in \mathbb{S}) \in \mathcal{P}_\mathbb{S}^0$ at the level $\alpha \in (0,1)$ if
\[
R_h(\hat{P}_\mathbb{S}) \geq C_\alpha^\mathrm{min}\bigl(\lceil 1/h_1\rceil,\ldots,\lceil 1/h_{d_0}\rceil,m_{d_0+1},\ldots,m_d,\mathbb{S},(n_S:S \in \mathbb{S})\bigr) =: C_\alpha^*,
\]
where $\hat{P}_S$ is the empirical distribution of $X_{S,1},\ldots,X_{S,n_S}$ for $S \in \mathbb{S}$, and $\hat{P}_\mathbb{S} = (\hat{P}_S:S \in \mathbb{S})$.

For $d' \in \mathbb{N}$, $r = (r_1,\ldots,r_{d'}) \in (0,1]^{d'}$ and $L > 0$, let $\mathcal{H}_{d'}(r,L)$ denote the class of functions that are $(r,L)$-H\"older on $[0,1)^{d'}$, i.e. the set of functions $p:[0,1)^{d'} \rightarrow \mathbb{R}$ satisfying
\[
|p(z_1,\ldots,z_{d'}) - p(z_1',\ldots,z_{d'}')| \leq L \sum_{j=1}^{d'} |z_j-z_j'|^{r_j}
\]
 for all $(z_1,\ldots,z_{d'}), (z_1',\ldots,z_{d'}') \in [0,1)^{d'}$.  Now let $\mathcal{P}_{\mathbb{S},r,L}$ denote the set of sequences of distributions $(P_S:S \in \mathbb{S})$ where $P_S$ is a distribution on $\mathcal{X}_S$ having density $p_S$ with respect to the Cartesian product of Lebesgue measure on $[0,1)^{S \cap [d_0]}$ and counting measure on $\prod_{j \in S \cap ([d] \setminus [d_0])} [m_j]$ satisfying the condition that the conditional density $x_{S \cap [d_0]} \mapsto p_S(x_{S \cap [d_0]} | x_{S \cap ([d] \setminus [d_0])})$ belongs to $\mathcal{H}_{|S \cap [d_0]|}(r,L)$ for all $x_{S \cap ([d] \setminus [d_0])} \in \prod_{j \in S \cap ([d] \setminus [d_0])} [m_j]$.
\begin{thm}
\label{Thm:ContinuousUpperBound}
In the above setting, let $\alpha, \beta \in (0,1)$ and suppose that $P_\mathbb{S} \in \mathcal{P}_{\mathbb{S},r,L}$.  Then the probability of a Type I error for our test is at most $\alpha$.  Moreover, if
\[
R(P_\mathbb{S}) \geq L (|\mathbb{S}|-1)\sum_{j=1}^{d_0} h_j^{r_j} + C_\alpha^* + C_\beta^*,
\]
then the probability of a Type II error is at most $\beta$.
\end{thm}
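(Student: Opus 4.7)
The plan is to reduce the mixed continuous/discrete problem to the discrete setting of Sections~\ref{Sec:Discrete}--\ref{Sec:ImprovedTest} by cellwise discretisation, and to control the resulting approximation error via the H\"older smoothness assumption. For any $Q_\mathbb{S} \in \mathcal{P}_\mathbb{S}$, let $\tilde{Q}_\mathbb{S}$ denote the discretisation to cell labels, that is, the family of discrete distributions on $\mathcal{K}_{h,S} := \prod_{j \in S \cap [d_0]} [\lceil 1/h_j \rceil] \times \prod_{j \in S \cap ([d] \setminus [d_0])} [m_j]$ given by $\tilde{Q}_S(k_S) := Q_S(C_{k,S})$, where $C_{k,S} := \prod_{j \in S \cap [d_0]} I_{h_j,k_j} \times \prod_{j \in S \cap ([d] \setminus [d_0])} \{k_j\}$.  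Because cells of $\mathcal{X}$ factorise as $C_k = C_{k,S} \times C_{k,S^c}$, piecewise-constant functions in $\mathcal{G}_{\mathbb{S},h}^+$ are in bijection with $[-1,\infty)$-valued functions on $\mathcal{K}_{h,S}$ satisfying the analogous discrete constraint $\inf_{k \in \mathcal{K}_h} \sum_S f_S(k_S) \geq 0$, and $\int_{\mathcal{X}_S} f_S \, dQ_S = \sum_{k_S \in \mathcal{K}_{h,S}} f_S(k_S) \tilde{Q}_S(k_S)$ for any such $f_S$.  Consequently $R_h(Q_\mathbb{S}) = R(\tilde{Q}_\mathbb{S})$, where the right-hand side is the discrete incompatibility index applied on $\mathcal{K}_{h,\mathbb{S}}$.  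In particular, $R_h(\hat{P}_\mathbb{S}) = R(\hat{\tilde{P}}_\mathbb{S})$, where $\hat{\tilde{P}}_S$ is the empirical distribution of the cell labels of $X_{S,1},\ldots,X_{S,n_S}$, which form an independent sample from $\tilde{P}_S$.

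For Type I error control, whenever $P_\mathbb{S} \in \mathcal{P}_\mathbb{S}^0$ and $P \in \mathcal{F}_\mathbb{S}(P_\mathbb{S})$, the cellwise discretisation $\tilde{P}$ of $P$ on $\mathcal{K}_h$ has $S$-marginal $\tilde{P}_S$, so $\tilde{P}_\mathbb{S}$ is itself compatible on the discrete alphabets.  The discrete size guarantees of Proposition~\ref{Prop:DiscreteTest1} and Theorem~\ref{Prop:DiscreteTest} (combined through the $C_\alpha^\mathrm{min}$ construction) applied on $\mathcal{K}_{h,\mathbb{S}}$ then yield $\mathbb{P}_{P_\mathbb{S}}\bigl(R(\hat{\tilde{P}}_\mathbb{S}) \geq C_\alpha^*\bigr) \leq \alpha$, which is the required Type I bound.

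For Type II error, the crux is the approximation estimate
\[
R(P_\mathbb{S}) - R(\tilde{P}_\mathbb{S}) \leq L(|\mathbb{S}|-1) \sum_{j=1}^{d_0} h_j^{r_j},
\]
which I would establish in three sub-steps.  First, the supremum in~\eqref{Eq:RPS} may be restricted to $f_\mathbb{S}$ with $f_S \in [-1,|\mathbb{S}|-1]^{\mathcal{X}_S}$: truncating via $\bar{f}_S := \min(f_S,|\mathbb{S}|-1)$ still satisfies $\bar{f}_\mathbb{S} \in \mathcal{G}_\mathbb{S}^+$ (at any $x$ where some coordinate exceeds $|\mathbb{S}|-1$, the lower bound $-1$ on the other $|\mathbb{S}|-1$ coordinates already forces the truncated sum to be non-negative) and only increases $R(P_\mathbb{S},f_\mathbb{S})$.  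Second, let $\tilde{P}^\mathrm{unif}_\mathbb{S}$ denote the family with the same cell probabilities as $P_\mathbb{S}$ but with mass spread uniformly within each cell; I claim $R(\tilde{P}_\mathbb{S}^\mathrm{unif}) = R(\tilde{P}_\mathbb{S})$.  Indeed, for any $f_\mathbb{S} \in \mathcal{G}_\mathbb{S}^+$, replacing each $f_S$ by its cellwise average $\bar{f}_S$ leaves $\int f_S \, d\tilde{P}_S^\mathrm{unif}$ unchanged, while integrating the inequality $\sum_S f_S(x_S) \geq 0$ over $x \in C_k$ and exploiting the factorisation $C_k = C_{k,S} \times C_{k,S^c}$ via Fubini gives $\sum_S \bar{f}_S(k_S) \geq 0$, so $\bar{f}_\mathbb{S} \in \mathcal{G}^+_{\mathbb{S},h}$.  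Third, for $f_S \in [-1,|\mathbb{S}|-1]^{\mathcal{X}_S}$ the bound $|f_S| \leq |\mathbb{S}|-1$ produces
\[
R(P_\mathbb{S},f_\mathbb{S}) - R(\tilde{P}^\mathrm{unif}_\mathbb{S},f_\mathbb{S}) \leq \frac{|\mathbb{S}|-1}{|\mathbb{S}|} \sum_{S \in \mathbb{S}} \int_{\mathcal{X}_S} |p_S - \tilde{p}^\mathrm{unif}_S|,
\]
and since $\tilde{p}_S^\mathrm{unif}$ is the cellwise average of $p_S$, the H\"older assumption on the conditional continuous densities yields $\int_{\mathcal{X}_S} |p_S - \tilde{p}^\mathrm{unif}_S| \leq L \sum_{j \in S \cap [d_0]} h_j^{r_j}$.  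Interchanging summations (each $j \in [d_0]$ lies in at most $|\mathbb{S}|$ sets $S \in \mathbb{S}$) then yields the displayed estimate.

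Putting the pieces together, if $R(P_\mathbb{S}) \geq L(|\mathbb{S}|-1) \sum_j h_j^{r_j} + C_\alpha^* + C_\beta^*$ then $R(\tilde{P}_\mathbb{S}) \geq C_\alpha^* + C_\beta^*$, so the discrete power guarantee applied on $\mathcal{K}_{h,\mathbb{S}}$ (Proposition~\ref{Prop:DiscreteTest1} combined with Theorem~\ref{Prop:DiscreteTest}) gives $\mathbb{P}_{P_\mathbb{S}}\bigl(R(\hat{\tilde{P}}_\mathbb{S}) \geq C_\alpha^*\bigr) \geq 1-\beta$, which is the required Type II bound.  The most delicate point is the cell-averaging step: the product structure $C_k = C_{k,S} \times C_{k,S^c}$ is indispensable for converting the pointwise constraint $\sum_S f_S(x_S) \geq 0$ into its piecewise-constant analogue via Fubini, and without it the identity $R(\tilde{P}^\mathrm{unif}_\mathbb{S}) = R(\tilde{P}_\mathbb{S})$ would fail.
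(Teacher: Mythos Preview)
Your proposal is correct and follows essentially the same route as the paper: discretise to cells, show that compatibility passes to the discretisation for the size bound, and for the power bound use cell-averaging together with the product structure $C_k = C_{k,S} \times C_{k,S^c}$ (via Fubini) to stay inside the admissible class, with the H\"older assumption controlling the approximation error.  The only cosmetic difference is dual: the paper averages the \emph{function} and compares $R(P_\mathbb{S},f_{\mathbb{S},h})$ with $R(P_\mathbb{S},f_\mathbb{S})$, whereas you average the \emph{measure} and compare $R(P_\mathbb{S},f_\mathbb{S})$ with $R(\tilde P^{\mathrm{unif}}_\mathbb{S},f_\mathbb{S})$; these are the same computation because $\int f_{S,h}\,dP_S = \int f_S\,d\tilde P^{\mathrm{unif}}_S$, and in fact you only need the inequality $R(\tilde P^{\mathrm{unif}}_\mathbb{S}) \leq R(\tilde P_\mathbb{S})$ (which is the direction your cell-averaging argument establishes), not the full equality you claim.
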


We now specialise the upper bound of Theorem~\ref{Thm:ContinuousUpperBound} to our main three-dimensional example. When $\mathbb{S}=\bigl\{\{1,2\},\{2,3\},\{1,3\}\bigr\}$ and $\mathcal{X} = [0,1)^2 \times \{1,2\}$ we have for $h_1,h_2 \in (0,1)$ that
\[
    L(|\mathbb{S}|-1)(h_1^{r_1} + h_2^{r_2}) + C_\alpha^* + C_\beta^* \lesssim_{L,|\mathbb{S}|,\alpha,\beta} h_1^{r_1} + h_2^{r_2} + \Bigl( \frac{1/h_1 + 1/h_2}{\min_{S \in \mathbb{S}} n_S} \Bigr)^{1/2},
\]
and we can choose $h_1,h_2$ to minimise this right-hand side. We can take $h_1=h_2 = n^{- \frac{1}{1+2(r_1 \wedge r_2)}}$ and $\alpha=\beta=1/4$ to deduce the minimax upper bound
\[
    \rho^*(n_\mathbb{S}) \lesssim_{L,|\mathbb{S}|} \Bigl( \min_{S \in \mathbb{S}} n_S \Bigr)^{-\frac{r_1 \wedge r_2}{1+ 2(r_1 \wedge r_2)}}.
\]

\section{Numerical studies}
\label{Sec:Numerics}

The tests introduced in Section~\ref{Sec:TestingCompatibility} provide finite-sample Type I error control over the entire null hypothesis parameter space $\mathcal{P}_\mathbb{S}^0$. However, this may lead to conservative tests in particular examples, so we first present an alternative, Monte Carlo-based approach to constructing the critical value for our test.  The first part of Proposition~\ref{Prop:L1Projection} and the dual formulation~\eqref{Eq:InfR} mean that we can write
\[
    \hat{P}_\mathbb{S} = \{1-R(\hat{P}_\mathbb{S})\} \hat{Q}_\mathbb{S} + R(\hat{P}_\mathbb{S}) \hat{T}_\mathbb{S},
\]
where $\hat{Q}_\mathbb{S} \in \mathcal{P}_\mathbb{S}^0$ and $\hat{T}_\mathbb{S} \in \mathcal{P}_\mathbb{S}$. Here $\hat{Q}_\mathbb{S}$ can be thought of as a closest compatible sequence of marginal distributions to $\hat{P}_\mathbb{S}$ (in particular, if $\hat{P}_\mathbb{S} \in \mathcal{P}_\mathbb{S}^0$, then $\hat{Q}_\mathbb{S} = \hat{P}_\mathbb{S}$). Moreover, $\hat{Q}_\mathbb{S}$ can be computed straightforwardly at the same time as our test statistic $R(\hat{P}_\mathbb{S})$. It is therefore natural to generate a critical value by drawing $B$ bootstrap samples from $\hat{Q}_\mathbb{S}$, computing the corresponding empirical distributions $\hat{Q}_\mathbb{S}^{(1)},\ldots,\hat{Q}_\mathbb{S}^{(B)}$ and test statistics $R(\hat{Q}_\mathbb{S}^{(1)}),\ldots,R(\hat{Q}_\mathbb{S}^{(B)})$, and rejecting $H_0'$ at significance level $\alpha \in (0,1)$ if and only if
\[
    1 + \sum_{b=1}^B \mathbbm{1}_{\{R(\hat{Q}_\mathbb{S}^{(b)}) \leq R(\hat{Q}_\mathbb{S})\} } \leq \alpha(B+1).
\]
In our first experiments, we took $\mathbb{S}=\bigl\{\{1,2\},\{2,3\},\{1,3\}\bigr\}$ with $\mathcal{X}=[r] \times [2]^2$ and $r \in \{2,4,6\}$; we fix $P_\mathbb{S} \in \mathcal{P}_\mathbb{S}^\mathrm{cons}$ by setting, for each $i \in [r]$,
\begin{equation}
    \label{Eq:SimEqualities}
    p_{i \bullet \bullet } = \frac{1}{r}, \quad p_{\bullet 1 \bullet } = p_{\bullet \bullet 1 } = \frac{1}{2}, \quad p_{i \bullet 1}=\frac{1}{2r}, \quad p_{i1 \bullet } = \frac{ 1+(-1)^i}{2r}
\end{equation}
and varying $p_{\bullet 2 1}$ to adjust the incompatibility index. Indeed, with these choices, we have $R(P_\mathbb{S})=2(p_{\bullet 2 1} - 1/4)_+$ by Theorem~\ref{Prop:rs2example}.  Our Monte Carlo test was applied with $n_\mathbb{S}=(200,200,200)$, $B=99$ and $\alpha=0.05$, and we repeated our experiments 5000 times in each setting. 

We are not aware of alternative methods that can be applied directly in this context, but the test of \citet{fuchs1982maximum} can be used if there are also complete cases available.  In order to provide some comparison, then, we gave the Fuchs method access to an additional $n_{\{1,2,3\}} = 200$ observations from the distribution on $\mathcal{X}$ with mass function $p_{ijk}=\{1+(-1)^{i+j}\}/(4r)$ for $i \in [r]$ and $j,k \in [2]$, which ensures that $\mathbb{A}p$ is a closest compatible sequence to $P_\mathbb{S}$, in our terminology above.  In particular, $\mathbb{A}p$ satisfies all equalities in~\eqref{Eq:SimEqualities}, as well as $(\mathbb{A}p)_{\bullet 21} = 1/4$.  We emphasise that these complete cases were not accessed by our method.   


\begin{figure}
         \centering
         \includegraphics[width=\textwidth]{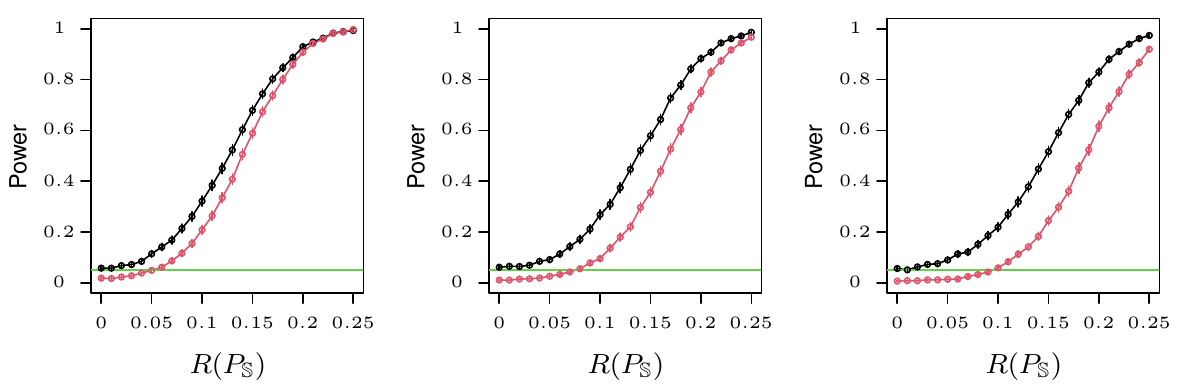}
        \caption{Power curves for our Monte Carlo test (black) and Fuchs's test (red). Error bars show three standard errors.  Here, $\mathbb{S}=\bigl\{\{1,2\},\{2,3\},\{1,3\}\bigr\}$ with $\mathcal{X}=[r] \times [2]^2$ and $r =2$ (left), $r=4$ (middle) and $r=6$ (right).}
        \label{Fig:Simulations}
\end{figure}
Figure~\ref{Fig:Simulations} presents the power curves of the two tests for the three different choices of~$r$.  We see that both tests have good control of the size of the test, and in fact the Fuchs test is slightly conservative.  Despite the extra complete cases that are available to the Fuchs method, though, our test is significantly more powerful, with the difference in power increasing as $r$ increases.

In our second set of experiments, we took $d=5$, $n_\mathbb{S}=(500,500,500,500,500)$, $\mathcal{X} = [2]^5$ and $\mathbb{S} = \bigl\{\{1,2,3,4\},\{1,2,3,5\},\{1,2,4,5\},\{1,3,4,5\},\{2,3,4,5\}\bigr\}$.  For $\epsilon \in [0.2,0.35]$ and $i,j,k,\ell,m \in [2]$, we set
\begin{align*}
p_{ijk\ell\bullet} &= \frac{1 + \epsilon(-1)^{i+j+k+\ell}}{16}, \quad p_{ijk\bullet m} = \frac{1 + \epsilon(-1)^{i+j+k+m}}{16}, \quad p_{ij\bullet\ell m} = \frac{1 + \epsilon(-1)^{i+j+\ell+m}}{16} \\  p_{i\bullet k\ell m} &= \frac{1 + \epsilon(-1)^{i+k+\ell+m}}{16}, \quad p_{\bullet j k\ell m} = \frac{1 - \epsilon(-1)^{j+k+\ell+m}}{16},
\end{align*}
for which $R(P_{\mathbb{S}}) = (5\epsilon-1)_+/4$.  In this case, we applied the Fuchs test for several different choices of the number of complete cases, namely $n_{\{1,2,3,4,5\}} \in \{25,50,100,200\}$. The complete case distribution $p$ was chosen so that $\mathbb{A} p$ was a closest compatible sequence to $P_\mathbb{S}$. Figure~\ref{Fig:Simulations2} shows the corresponding power curves, along with that of our test.  In this example, our test is the only one that controls the Type I error at the nominal level, so none of Fuchs tests are reliable here.  We also see that the additional complete cases are crucial for the power of the Fuchs test, and that the power of our test remains competitive even without these observations.
\begin{figure}
         \centering
         \includegraphics[width=0.6\textwidth]{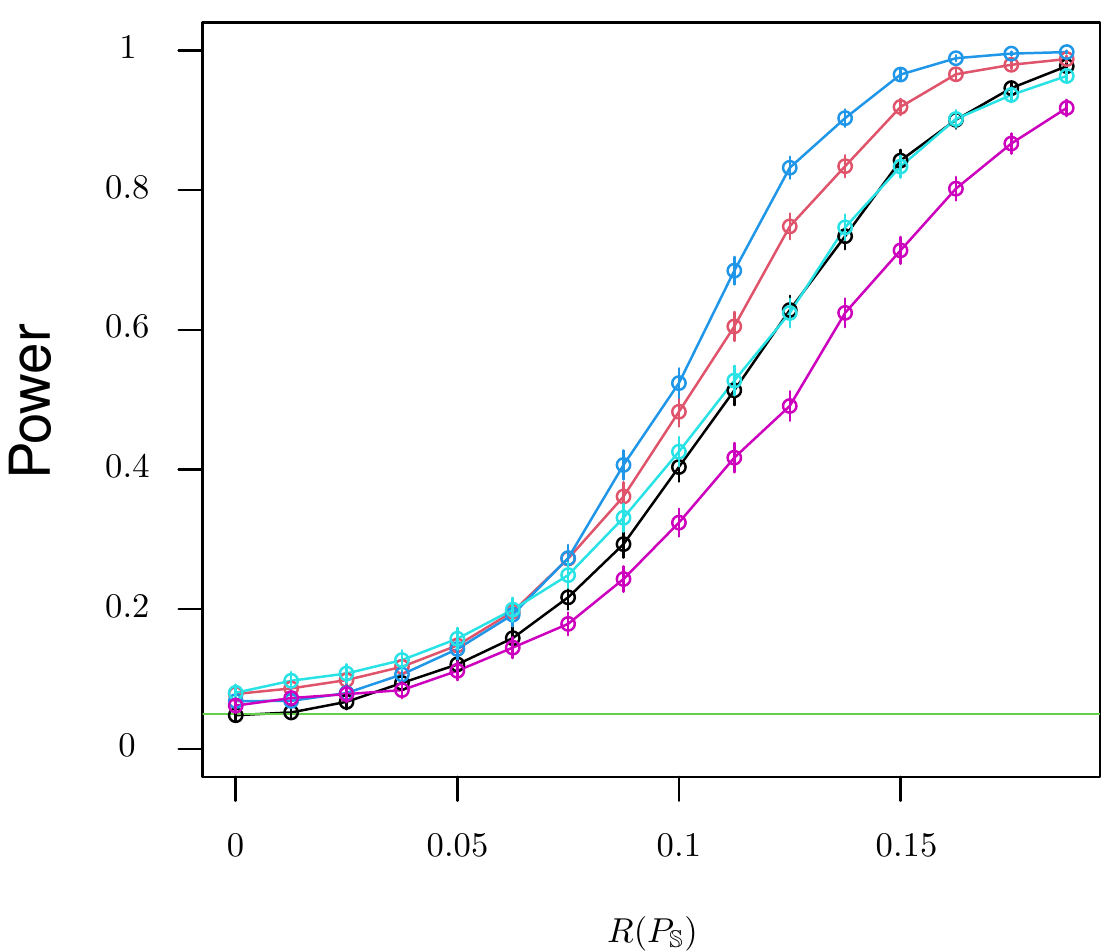}
     \caption{Power curves for our Monte Carlo test (black) and Fuchs's test with the latter test being applied with an additional $n_{\{1,2,3,4,5\}} = 25$ (magenta), $50$ (cyan), $100$ (red) and $200$ (blue) complete cases. Error bars show three standard errors.  Here, $\mathbb{S}=\bigl\{\{1,2,3,4\},\{1,2,3,5\},\{1,2,4,5\},\{1,3,4,5\},\{2,3,4,5\}\bigr\}$, with $\mathcal{X}=[2]^5$.}
        \label{Fig:Simulations2}
\end{figure}

\section{Proofs and auxiliary results}
\label{Sec:Proofs}

\begin{proof}[Proof of Theorem~\ref{Thm:DualRepresentation}]
We apply the idea of \emph{Alexandroff (one-point) compactification}  \citep{alexandroff1924metrisation}.    Specifically, writing $\mathcal{J} := \{j \in [d]: \mathcal{X}_j \text{ is not compact}\}$, for each $j \in \mathcal{J}$, we can construct a one-point enlarged space $\mathcal{X}_j^* := \mathcal{X}_j \cup \{\infty_j\}$ (where $\infty_j \notin \mathcal{X}_j$), and take as a topology on $\mathcal{X}_j^*$ all open subsets of $\mathcal{X}_j$ together with all sets of the form $(\mathcal{X}_j \setminus K) \cup \{\infty_j\}$, where $K$ is compact in~$\mathcal{X}_j$.  With this topology, $\mathcal{X}_j^*$ is a compact, Hausdorff space \citep[][Proposition~4.36]{folland1999real}.  We also set $\mathcal{X}_j^* := \mathcal{X}_j$ for $j \in [d] \setminus \mathcal{J}$.  We can extend each probability measure $P_S$ to a Borel probability measure $P_S^*$ on $\mathcal{X}_S^* := \prod_{j \in S} \mathcal{X}_j^*$ (equipped with the product topology) by setting $P_S^*(B) := P_S(B \cap \mathcal{X}_S)$ for all Borel subsets $B$ of~$\mathcal{X}_S^*$.

Now, suppose that $f_\mathbb{S} \in \mathcal{G}_\mathbb{S}^+(\mathcal{X}_\mathbb{S})$ satisfies $f_S \leq |\mathbb{S}|-1$ for all $S \in \mathbb{S}$.  We extend each $f_S$ to a function $f_S^*$ on $\mathcal{X}_S^*$ by defining
\[
f_S^*(x_S^*) := \left\{ \begin{array}{ll} f_S(x_S^*) & \mbox{if $x_j^* \in \mathcal{X}_j$ \text{for all} $j \in S$} \\ |\mathbb{S}|-1 & \mbox{otherwise.} \end{array} \right.
\]
To see that $f_S^*$ is upper semi-continuous,  first suppose that $x_S^* \in \mathcal{X}_S$ and $y > f_S^*(x_S^*) = f_S(x_S^*)$.  Since $f_S$ is upper semi-continuous and all sets that are open in $\mathcal{X}_S$ are open in $\mathcal{X}_S^*$, there exists a neighbourhood $U \subseteq \mathcal{X}_S^*$ of $x_S^*$ such that $f_S^*(x_S) < y$ for all $x_S \in U$.    On the other hand, if $x_S^* \in \mathcal{X}_S^* \setminus \mathcal{X}_S$ and $y > f_S^*(x_S^*) = |\mathbb{S}|-1$, then we can take the neighbourhood $U = \mathcal{X}_S^*$ to see that $f_S^*(x_S) < y$ for all $x_S \in U$.  This establishes that $f_S^*$ is indeed upper semi-continuous.  Writing $\mathcal{X}^* := \prod_{j \in [d]} \mathcal{X}_j^*$, we also have that 
\[
\inf_{x^* \in \mathcal{X}^*} \sum_{S \in \mathbb{S}} f_S^*(x_S^*) \geq \min \biggl\{ 0,  \inf_{x \in \mathcal{X}} \sum_{S \in \mathbb{S}} f_S(x_S) \biggr\} = 0,
\]
so $f_\mathbb{S}^* \in \mathcal{G}_\mathbb{S}^+(\mathcal{X}_\mathbb{S}^*)$.  Moreover,
\begin{equation}
\label{Eq:XXstar}
R_{\mathcal{X}_\mathbb{S}}(P_\mathbb{S},f_\mathbb{S}) = R_{\mathcal{X}_\mathbb{S}^*}(P_\mathbb{S}^*,f_\mathbb{S}^*).
\end{equation}
In the other direction, given any $f_\mathbb{S}^* \in \mathcal{G}_\mathbb{S}^+(\mathcal{X}_\mathbb{S}^*)$, we can define $f_\mathbb{S} = (f_S:S \in \mathbb{S})$ on $\mathcal{X}_\mathbb{S}$ by defining each $f_S$ to be the restriction of $f_S^*$ to $\mathcal{X}_S$.  Then, for each $t \in \mathbb{R}$, 
\[
(f_S)^{-1}\bigl([t,\infty)\bigr) = (f_S^*)^{-1}\bigl([t,\infty)\bigr) \cap \mathcal{X}_S,
\]
so $(f_S)^{-1}\bigl([t,\infty)\bigr)$ is a closed subset of $\mathcal{X}_S$ and $f_S$ is upper semi-continuous.  Moreover,
\[
\inf_{x \in \mathcal{X}} \sum_{S \in \mathbb{S}} f_S(x_S) \geq \inf_{x^* \in \mathcal{X}^*} \sum_{S \in \mathbb{S}} f_S^*(x_S^*) \geq 0,
\]
so $f_\mathbb{S} \in \mathcal{G}_\mathbb{S}^+(\mathcal{X}_\mathbb{S})$.  Again, the equality~\eqref{Eq:XXstar} holds.  We deduce that
\begin{align}
\label{Eq:RXXstar}
R_{\mathcal{X}_\mathbb{S}}(P_\mathbb{S}) &= \sup\bigl\{R_{\mathcal{X}_\mathbb{S}}(P_\mathbb{S},f_\mathbb{S}):f_{\mathbb{S}} \in \mathcal{G}_\mathbb{S}^+(\mathcal{X}_\mathbb{S})\bigr\} \nonumber \\
&= \sup\bigl\{R_{\mathcal{X}_\mathbb{S}^*}(P_\mathbb{S}^*,f_\mathbb{S}^*):f_{\mathbb{S}}^* \in \mathcal{G}_\mathbb{S}^+(\mathcal{X}_\mathbb{S}^*)\bigr\} = R_{\mathcal{X}_\mathbb{S}^*}(P_\mathbb{S}^*).
\end{align}
Now let $\mathcal{C}_\mathbb{S}^+(\mathcal{X}_\mathbb{S}^*)$ denote the subset of continuous functions in  $\mathcal{G}_\mathbb{S}^+(\mathcal{X}_\mathbb{S}^*)$.  Since compact Hausdorff spaces are completely regular, by \citet[][Proposition~1.33 and an inspection of the proof of  Proposition~3.13]{kellerer1984duality}, we have
\[
    R_{\mathcal{X}_\mathbb{S}}(P_\mathbb{S}) = R_{\mathcal{X}_\mathbb{S}^*}(P_\mathbb{S}^*) = \sup \bigl\{ R_{\mathcal{X}_\mathbb{S}^*}(P_\mathbb{S}^*,f_\mathbb{S}^*) : f_\mathbb{S}^* \in \mathcal{C}_\mathbb{S}^+(\mathcal{X}_\mathbb{S}^*) \bigr\}.
\]
Having established that $R_{\mathcal{X}_\mathbb{S}}(P_\mathbb{S})$ may be computed as a supremum over functions defined on compact spaces, we now consider the implications for the dual representation of the one-point compactification.  Suppose that $\epsilon \in [0,1]$ is such that $P_\mathbb{S} \in (1-\epsilon)\mathcal{P}_\mathbb{S}^0(\mathcal{X}_\mathbb{S}) + \epsilon \mathcal{P}_\mathbb{S}(\mathcal{X}_\mathbb{S})$.  Then $P_\mathbb{S} = (1-\epsilon)Q_\mathbb{S} + \epsilon T_\mathbb{S}$, where $Q_\mathbb{S} \in \mathcal{P}_\mathbb{S}^0(\mathcal{X}_\mathbb{S})$ and $T_\mathbb{S} \in \mathcal{P}_\mathbb{S}(\mathcal{X}_\mathbb{S})$.  For each $S \in \mathbb{S}$, we define probability measures $Q_S^*,T_S^*$ on $\mathcal{X}_S^*$ by $Q_S^*(B) := Q_S(B \cap \mathcal{X}_S)$ and $T_S^*(B) := T_S(B \cap \mathcal{X}_S)$ for all Borel subsets $B$ of~$\mathcal{X}_S^*$.  Then $Q_\mathbb{S}^* \in \mathcal{P}_\mathbb{S}^0(\mathcal{X}_\mathbb{S}^*)$, because $R_{\mathcal{X}_\mathbb{S}^*}(Q_\mathbb{S}^*) = R_{\mathcal{X}_\mathbb{S}}(Q_\mathbb{S}) = 0$ from~\eqref{Eq:RXXstar} and the fact that $Q_\mathbb{S} \in \mathcal{P}_\mathbb{S}^0(\mathcal{X}_\mathbb{S})$.  Hence $P_\mathbb{S}^* = (1-\epsilon)Q_\mathbb{S}^* + \epsilon T_\mathbb{S}^* \in (1-\epsilon)\mathcal{P}_\mathbb{S}^0(\mathcal{X}_\mathbb{S}^*) + \epsilon \mathcal{P}_\mathbb{S}(\mathcal{X}_\mathbb{S}^*)$. 

Conversely, suppose initially that $\epsilon \in (0,1)$ is such that $P_\mathbb{S}^* \in (1-\epsilon)\mathcal{P}_\mathbb{S}^0(\mathcal{X}_\mathbb{S}^*) + \epsilon \mathcal{P}_\mathbb{S}(\mathcal{X}_\mathbb{S}^*)$, so that $P_\mathbb{S}^* = (1-\epsilon)Q_\mathbb{S}^* + \epsilon T_\mathbb{S}^*$, where $Q_\mathbb{S}^* \in \mathcal{P}_\mathbb{S}^0(\mathcal{X}_\mathbb{S}^*)$ and $T_\mathbb{S}^* \in \mathcal{P}_\mathbb{S}(\mathcal{X}_\mathbb{S}^*)$.  Observe that we must have $Q_S^*(B) = Q_S^*(B \cap \mathcal{X}_S)$ and $T_S^*(B) = T_S^*(B \cap \mathcal{X}_S)$ for all $S \in \mathbb{S}$ and all Borel subsets $B \subseteq \mathcal{X}_\mathbb{S}^*$, because $P_S^*$ does not put any mass outside $\mathcal{X}_S$.  Then we can define families of probability measures $Q_\mathbb{S} = (Q_S:S \in \mathbb{S})$ and $T_\mathbb{S} = (T_S:S \in \mathbb{S})$ by $Q_S(B) := Q_S^*(B)$ and $T_S(B) := T_S^*(B)$ for each $S \in \mathbb{S}$ and each Borel subset $B$ of $\mathcal{X}_\mathbb{S}$, and have $P_\mathbb{S} = (1-\epsilon)Q_\mathbb{S} + \epsilon T_\mathbb{S} \in (1-\epsilon)\mathcal{P}_\mathbb{S}^0(\mathcal{X}_\mathbb{S}) + \epsilon \mathcal{P}_\mathbb{S}(\mathcal{X}_\mathbb{S})$.  The boundary cases $\epsilon \in \{0,1\}$ can also be handled similarly, and we deduce that   
\[
\inf \bigl\{ \epsilon \in [0,1] \!:\! P_\mathbb{S} \in (1-\epsilon) \mathcal{P}_\mathbb{S}^0(\mathcal{X}_\mathbb{S}) + \epsilon \mathcal{P}_\mathbb{S}(\mathcal{X}_\mathbb{S})\bigr\} = \inf \bigl\{ \epsilon \in [0,1] \!:\! P_\mathbb{S}^* \in (1-\epsilon) \mathcal{P}_\mathbb{S}^0(\mathcal{X}_\mathbb{S}^*) + \epsilon \mathcal{P}_\mathbb{S}(\mathcal{X}_\mathbb{S}^*)\bigr\}.
\]

The upshot of this argument is that we may assume without loss of generality that each~$\mathcal{X}_j$ is a compact Hausdorff space (not just locally compact), so that 
\[
R(P_\mathbb{S}) = \sup \bigl\{ R(P_\mathbb{S},f_\mathbb{S}) : f_\mathbb{S} \in \mathcal{C}_\mathbb{S}^+\bigr\},
\]
where we now have suppressed the dependence of these quantities on $\mathcal{X}_\mathbb{S}$.  We now seek to apply \citet[][Theorem~2.3]{isii1964inequalities} to rewrite this expression for $R(P_\mathbb{S})$ in its dual form; this will require some further definitions.  Let
\begin{align*}
    X &:= \{g_\mathbb{S} = (g_S : S \in \mathbb{S}) : g_S: \mathcal{X}_S \rightarrow [0,\infty) \text{ is continuous for all } S \in \mathbb{S} \},
\end{align*}
let $Z$ denote the set of real-valued, continuous functions on $\mathcal{X}$ endowed with the supremum norm topology, 
let $\mathcal{C} \subseteq Z$ denote those elements of $Z$ that are non-negative, let $\psi:X \rightarrow Z$ be given by $\psi(g_\mathbb{S})(x) :=(1/|\mathbb{S}|) \sum_{S \in \mathbb{S}} g_S(x_S)$, and let $\phi: X \rightarrow \mathbb{R}$ be given by $\phi(g_\mathbb{S}) := - (1/|\mathbb{S}|) \sum_{S \in \mathbb{S}} \int g_S \, dP_S$. Now $\mathcal{C}$ is a convex cone with non-empty interior. Moreover, for any $g \in Z$ we can take $g_\mathbb{S} = \|g\|_\infty$ and $g' = \|g\|_\infty - g \in \mathcal{C}$ to see that $\psi(g_\mathbb{S}) - g' = \|g\|_\infty - g' = g$, and so $\psi(X) - \mathcal{C} = Z$. This shows that Assumption A of \citet{isii1964inequalities} holds. Since $X$ is a convex cone and $\phi$ and $\psi$ are linear we see that the conditions of  \citet[][Theorem~2.3]{isii1964inequalities} are satisfied. Now, $\mathcal{X}$ is compact by Tychanov's theorem~\citep[e.g.][Theorem~4.42]{folland1999real} (which is equivalent to the axiom of choice), so by a version of the Riesz representation theorem~\citep[e.g.][Theorem~7.2]{folland1999real}, the set of non-negative elements of the continuous dual $Z^*$ of $Z$ is the set of Radon measures on $\mathcal{X}$, denoted $\mathcal{M}_+(\mathcal{X})$.  Thus, writing $\mu^S$ for the marginal measure on $\mathcal{X}_S$ of $\mu \in \mathcal{M}_+(\mathcal{X})$, we have
\begin{align}
\label{Eq:Duality}
    R(P_\mathbb{S}) &= 1 + \sup\{ \phi(g_\mathbb{S}) : g_\mathbb{S} \in X, \psi(g_\mathbb{S}) - 1 \geq 0 \} \nonumber \\
    & = 1 + \inf\bigl\{ z^*( -1) : z^* \in Z^*, z^* \geq 0, z^*\bigl( \psi(g_\mathbb{S})\bigr) + \phi(g_\mathbb{S}) \leq 0 \text{ for all } g_\mathbb{S} \in X \bigr\} \nonumber \\
    & = 1 + \inf \biggl\{ - \mu(\mathcal{X}) : \mu \in \mathcal{M}_+(\mathcal{X}), \int_{\mathcal{X}} \biggl(\sum_{S \in \mathbb{S}} g_S \biggr) d \mu \leq \sum_{S \in \mathbb{S}} \int_{\mathcal{X}_S} g_S \, dP_S \text{ for all } g_\mathbb{S} \in X\biggr\} \nonumber \\
    &= 1 - \sup\biggl\{\mu(\mathcal{X}) : \mu \in \mathcal{M}_+(\mathcal{X}), \int_{\mathcal{X}_S} g_S \, d \mu^S \leq \int_{\mathcal{X}_S} g_S \, dP_S \text{ for all } S \in \mathbb{S}, g_\mathbb{S} \in X\biggr\}. 
\end{align}
We finally claim that this last display is equal to the claimed form in the statement of the result.  Let $\epsilon \in [0,1]$ be such that $P_\mathbb{S} \in (1-\epsilon) \mathcal{P}_\mathbb{S}^0 + \epsilon \mathcal{P}_\mathbb{S}$.  Then there exists a probability measure $\mu$ on $\mathcal{X}$ with marginals $\mu_\mathbb{S} := (\mu^S:S \in \mathbb{S})$ for which we can write $P_\mathbb{S} = (1-\epsilon)\mu_\mathbb{S} + \epsilon Q_\mathbb{S}$, where $Q_\mathbb{S} \in \mathcal{P}_\mathbb{S}$.  Since every open set in $\mathcal{X}$ is $\sigma$-compact, the probability measure $\mu$ is necessarily Radon \citep[][Theorem~7.8]{folland1999real}.  Now for all $S \in \mathbb{S}$, and $g_\mathbb{S} \in X$,
\[
    (1-\epsilon) \int_{\mathcal{X}_S} g_S \, d \mu^S = \int_{\mathcal{X}_S} g_S \, d(P_S - \epsilon Q_S) \leq \int_{\mathcal{X}_S} g_S \, dP_S
\]
so $(1-\epsilon) \mu$ is feasible and we deduce from~\eqref{Eq:Duality} that $R(P_\mathbb{S}) \leq \epsilon$.  Hence $R(P_\mathbb{S}) \leq \inf\bigl\{ \epsilon \in [0,1] : P_\mathbb{S} \in (1-\epsilon) \mathcal{P}_\mathbb{S}^0 + \epsilon \mathcal{P}_\mathbb{S}\bigr\}$.  For the bound in the other direction, first suppose that $R(P_\mathbb{S}) = 1$.  Then, from~\eqref{Eq:Duality}, the only element $\mu$ of $\mathcal{M}_+(\mathcal{X})$ satisfying $\int_{\mathcal{X}_S} g_S \, d \mu^S \leq \int_{\mathcal{X}_S} g_S \, dP_S$ for all $S \in \mathbb{S}, g_\mathbb{S} \in X$ is the zero measure on $\mathcal{X}$.  If $\epsilon \in [0,1]$ is such that $P_\mathbb{S} = (1-\epsilon)Q_\mathbb{S} + \epsilon T_\mathbb{S}$ with $Q_\mathbb{S} \in \mathcal{P}_\mathbb{S}^0$ and $T_\mathbb{S} \in \mathcal{P}_\mathbb{S}$, then for any $S \in \mathbb{S}$ and $g_\mathbb{S} \in X$,
\[
\int_{\mathcal{X}_S} g_S \, d(1-\epsilon)Q_S \leq\int_{\mathcal{X}_S} g_S \, dP_S.
\]
It follows that $(1-\epsilon)Q_\mathbb{S} \in \mathcal{M}_+(\mathcal{X})$ must be the zero measure, so $\epsilon = 1$.  Hence, when $R(P_\mathbb{S}) = 1$, we also have $\inf\bigl\{ \epsilon \in [0,1] : P_\mathbb{S} \in (1-\epsilon) \mathcal{P}_\mathbb{S}^0 + \epsilon \mathcal{P}_\mathbb{S}\bigr\} = 1$.  Now suppose that $R(P_\mathbb{S}) < 1$, so by~\eqref{Eq:Duality}, given $\delta \in \bigl(0,1-R(P_\mathbb{S})\bigr)$, we can find $\mu \in \mathcal{M}_+(\mathcal{X})$ with marginals $(\mu^S:S \in \mathbb{S})$ that satisfies $\int_{\mathcal{X}_S} g_S \, d \mu^S \leq \int_{\mathcal{X}_S} g_S \, dP_S$ for all $S \in \mathbb{S}, g_\mathbb{S} \in X$ and $\mu(\mathcal{X}) = 1 - R(P_\mathbb{S}) - \delta$. Writing $\epsilon := 1 - \mu(\mathcal{X}) = R(P_\mathbb{S}) + \delta$, let $Q_\mathbb{S} := (\mu^S/(1-\epsilon) : S \in \mathbb{S}) \in \mathcal{P}_\mathbb{S}^0$, and let $T_\mathbb{S} := \epsilon^{-1}\bigl(P_\mathbb{S} - (1-\epsilon) Q_\mathbb{S}\bigr)$.  Then $T_S(\mathcal{X}_S) = 1$ for all $S \in \mathbb{S}$, and for any $S \in \mathbb{S}$ and $g_\mathbb{S} \in X$,
\[
    \int_{\mathcal{X}_S} g_S \, dT_S = \frac{1}{\epsilon} \int_{\mathcal{X}_S} g_S \, d(P_S - \mu^S) \geq 0.
\]
Thus $T_S$ is a probability measure on $\mathcal{X}_S$ for all $S \in \mathbb{S}$, so $T_\mathbb{S} \in \mathcal{P}_\mathbb{S}$ and $P_\mathbb{S} \in (1-\epsilon) \mathcal{P}_\mathbb{S}^0 + \epsilon \mathcal{P}_\mathbb{S}$.  Since $\delta \in \bigl(0,1-R(P_\mathbb{S})\bigr)$ was arbitrary, we deduce that $\inf\bigl\{ \epsilon \in [0,1] : P_\mathbb{S} \in (1-\epsilon) \mathcal{P}_\mathbb{S}^0 + \epsilon \mathcal{P}_\mathbb{S}\bigr\} \leq R(P_\mathbb{S})$.  This completes the proof.
\end{proof}

\begin{proof}[Proof of Proposition~\ref{Prop:DiscreteTest1}]

Our strategy here is to apply results on the concentration properties and the mean of the supremum $R( \hat{P}_{\mathbb{S}})$ of the empirical process 
\begin{equation}
\label{Eq:EmpiricalProcess}
R( \hat{P}_{\mathbb{S}},f_{\mathbb{S}}) = - \frac{1}{|\mathbb{S}|} \sum_{S \in \mathbb{S}} \frac{1}{n_S}\sum_{i=1}^{n_S} f_S(X_{S,i})
\end{equation}
over $f_{\mathbb{S}} \in \mathcal{G}_{\mathbb{S}}^+$. If $f_{\mathbb{S}} \in \mathcal{G}_{\mathbb{S}}^+$, then $\min(f_{\mathbb{S}},|\mathbb{S}|-1) \in \mathcal{G}_{\mathbb{S}}^+$, because if this were not the case, then there would exist $x^0 = (x_S^0 : S \in \mathbb{S}) \in \mathcal{X}$ and $S_0 \in \mathbb{S}$ with $f_{S_0}(x_{S_0}^0) > |\mathbb{S}|-1$ such that
\[
\sum_{S \in \mathbb{S}} \min\bigl\{f_S(x_S^0), |\mathbb{S}|-1\bigr\} < 0. 
\]
But, since $f_{\mathbb{S}} \geq -1$, we would then have
\[
\sum_{S \in \mathbb{S}} \min\bigl\{f_S(x_S^0), |\mathbb{S}|-1\bigr\} > |\mathbb{S}|-1 + \sum_{S \in \mathbb{S}:S \neq S_0} f_S(x_S^0) \geq 0,
\]
a contradiction.  Since $R\bigl(P_{\mathbb{S}},\min(f_{\mathbb{S}},|\mathbb{S}|-1)\bigr) \geq R(P_{\mathbb{S}},f_{\mathbb{S}})$, it follows that, in seeking a maximiser in~\eqref{Eq:EmpiricalProcess}, we may restrict our optimisation to $\bigl\{f_{\mathbb{S}} \in \mathcal{G}_{\mathbb{S}}^+: f_{\mathbb{S}} \leq |\mathbb{S}|-1\bigr\}$. 

Writing $V:=\sum_{S \in \mathbb{S}} n_S^{-1}$, by \citet[][Theorem~12.1]{boucheron2013concentration} --- a consequence of the bounded differences (McDiarmid's) inequality --- for any collection $P_{\mathbb{S}}$ and $\lambda \in \mathbb{R}$, we have
\[
    \log \mathbb{E}\exp\bigl( \lambda \bigl\{ R(\hat{P}_{\mathbb{S}}) - \mathbb{E} R (\hat{P}_{\mathbb{S}})\bigr\}\bigr) \leq \frac{V \lambda^2}{8}.
\]
In particular, by the usual sub-Gaussian tail bound,
\begin{align}
\label{Eq:DiscreteConcentration}
    \max \bigl\{ \mathbb{P}\bigl( R(\hat{P}_{\mathbb{S}}) - \mathbb{E} R (\hat{P}_{\mathbb{S}}) \leq - t \bigr), \mathbb{P}\bigl( R( \hat{P}_{\mathbb{S}}) - \mathbb{E} R (\hat{P}_{\mathbb{S}}) \geq t \bigr) \bigr\} &\leq \exp \Bigl( - \frac{2t^2}{V} \Bigr) \nonumber \\
    &= \exp \biggl( - \frac{2 t^2}{\sum_{S \in \mathbb{S}} n_S^{-1}} \biggr)
\end{align}
for all $t \geq 0$.  Moreover, by the triangle inequality and two applications of Cauchy--Schwarz,
\begin{align*}
\label{Eq:Expectation}
    \mathbb{E}\biggl\{ \sup_{f_{\mathbb{S}} \in \mathcal{G}_{\mathbb{S}}^+: f_\mathbb{S} \leq |\mathbb{S}|-1} \bigl| R(\hat{P}_{\mathbb{S}},f_{\mathbb{S}}) &- R(P_{\mathbb{S}},f_{\mathbb{S}}) \bigr| \biggr\} \\
    &= \frac{1}{|\mathbb{S}|} \mathbb{E}  \biggl\{ \sup_{f_{\mathbb{S}} \in \mathcal{G}_{\mathbb{S}}^+: f_\mathbb{S} \leq |\mathbb{S}|-1} \biggl| \sum_{S \in \mathbb{S}} \sum_{x_S \in \mathcal{X}_S} f_S(x_S) \bigl\{ \hat{P}_S(\{x_S\}) - P_S(\{x_S\}) \bigr\} \biggr| \biggr\} \nonumber \\
    & \leq\frac{1}{2} \sum_{S \in \mathbb{S}} \sum_{x_S \in \mathcal{X}_S} \mathbb{E}\bigl|\hat{P}_S(\{x_S\}) - P_S(\{x_S\})\bigr| \nonumber \\
    & \leq \frac{1}{2}\sum_{S \in \mathbb{S}} \frac{1}{n_S^{1/2}} \sum_{x_S \in \mathcal{X}_S} \bigl[ P_S(\{x_S\})\bigl\{1-P_S(\{x_S\})\bigr\} \bigr]^{1/2} \nonumber \\
    &\leq \frac{1}{2}\sum_{S \in \mathbb{S}} \Bigl( \frac{|\mathcal{X}_S| -1}{n_S} \Bigr)^{1/2}.
\end{align*}
Thus, 
\begin{equation}
\label{Eq:NullExpectation}
    \bigl|\mathbb{E} R(\hat{P}_{\mathbb{S}}) - R(P_{\mathbb{S}})\bigr| \leq \frac{1}{2} \sum_{S \in \mathbb{S}} \Bigl( \frac{|\mathcal{X}_S| -1}{n_S} \Bigr)^{1/2}. 
\end{equation}
It follows from~\eqref{Eq:NullExpectation} and~\eqref{Eq:DiscreteConcentration} that under $H_0'$, i.e.~when $R(P_\mathbb{S}) = 0$, we have
\[
    \mathbb{P}\bigl( R( \hat{P}_{\mathbb{S}}) \geq C_\alpha \bigr) \leq \mathbb{P}\biggl( R(\hat{P}_{\mathbb{S}}) - \mathbb{E} R(\hat{P}_{\mathbb{S}}) \geq \biggl\{ \frac{1}{2} \log(1/\alpha) \sum_{S \in \mathbb{S}} \frac{1}{n_S} \biggr\}^{1/2} \biggr) \leq \alpha.
\]
On the other hand, if $R(P_\mathbb{S}) \geq C_\alpha + C_\beta$, then from~\eqref{Eq:NullExpectation} and~\eqref{Eq:DiscreteConcentration} again,
\begin{align*}
    \mathbb{P}\bigl( R(\hat{P}_{\mathbb{S}}) \geq C_\alpha\bigr) &\geq \mathbb{P}\biggl( R(\hat{P}_{\mathbb{S}}) - R(P_{\mathbb{S}}) \geq -\frac{1}{2} \sum_{S \in \mathbb{S}} \Bigl( \frac{|\mathcal{X}_S| \!-\! 1}{n_S} \Bigr)^{1/2} - \biggl\{ \frac{1}{2} \log(1/\beta) \sum_{S \in \mathbb{S}} \frac{1}{n_S} \biggr\}^{1/2} \biggr) \\
    & \geq \mathbb{P}\biggl( R(\hat{P}_{\mathbb{S}}) - \mathbb{E} R(\hat{P}_{\mathbb{S}}) \geq - \biggl\{ \frac{1}{2} \log(1/\beta) \sum_{S \in \mathbb{S}} \frac{1}{n_S} \biggr\}^{1/2} \biggr) \geq 1 - \beta,
\end{align*}
as required.

\end{proof}

\begin{proof}[Proof of Proposition~\ref{Prop:L1Projection}] 
By the same argument given at the start of the proof of Proposition~\ref{Prop:DiscreteTest1}, in seeking a maximiser in~\eqref{Eq:RPS}, we may restrict our optimisation to $\bigl\{f_{\mathbb{S}} \in \mathcal{G}_{\mathbb{S}}^+: f_{\mathbb{S}} \leq |\mathbb{S}|-1\bigr\}$.  But $\bigl[-1,|\mathbb{S}|-1\bigr]^{d_{\mathbb{S}}}$ is a compact subset of $\mathbb{R}^{d_{\mathbb{S}}}$, and we may regard $f_{\mathbb{S}} \mapsto R(P_{\mathbb{S}},f_{\mathbb{S}})$ as a continuous function on this set, so the supremum in~\eqref{Eq:RPS} is attained.

By specialising Theorem~\ref{Thm:DualRepresentation} to the discrete case we see that
\begin{align*}
    R(P_\mathbb{S}) &= \sup\bigl\{ \epsilon \in [0,1] : p_\mathbb{S} = \epsilon q_\mathbb{S} + (1-\epsilon) r_\mathbb{S}, q_\mathbb{S} \in \mathcal{P}_\mathbb{S}^0, r_\mathbb{S} \in \mathcal{P}_\mathbb{S} \bigr\}.
\end{align*}
When $R(P_\mathbb{S})=0$ we can trivially attain the supremum by taking $q_\mathbb{S}=p_\mathbb{S} \in \mathcal{P}_\mathbb{S}^0$, since we already know that $R(P_\mathbb{S})=0$ if and only if $\mathcal{P}_\mathbb{S} \in \mathcal{P}_\mathbb{S}^0$. Supposing that $R(P_\mathbb{S})>0$, for each $m \geq 1/R(P_\mathbb{S})$ we can find $q_{\mathbb{S}}^{(m)} \in \mathcal{P}_\mathbb{S}^0$,  $r_\mathbb{S}^{(m)} \in \mathcal{P}_\mathbb{S}$, and $\epsilon^{(m)} \in [R(P_\mathbb{S}),R(P_\mathbb{S})-1/m]$ such that $p_\mathbb{S} = \epsilon^{(m)} q_\mathbb{S}^{(m)} + (1-\epsilon^{(m)}) r_\mathbb{S}^{(m)}$. There exists a subsequence $(m_k)_{k \in \mathbb{N}}$, $q_\mathbb{S} \in \mathcal{P}_\mathbb{S}^0$, and $r_\mathbb{S} \in \mathcal{P}_\mathbb{S}$ such that $q_\mathbb{S}^{(m_k)} \rightarrow q_\mathbb{S}$ and $r_\mathbb{S}^{(m_k)} \rightarrow r_\mathbb{S}$ as $k \rightarrow \infty$. We see that we must have
\[
    p_\mathbb{S} = R(P_\mathbb{S}) q_\mathbb{S} + \{1-R(P_\mathbb{S})\} r_\mathbb{S},
\]
so that the supremum in~\eqref{Thm:DualRepresentation} is indeed attained.

\medskip

We now turn to the final part of the result. From Theorem~\ref{Thm:DualRepresentation} we know that for any $\epsilon>0$ we have $R(P_\mathbb{S}) \leq \epsilon$ if and only if $P_\mathbb{S} \in (1-\epsilon) \mathcal{P}_\mathbb{S}^0 + \epsilon \mathcal{P}_\mathbb{S}$. Now suppose that $P_\mathbb{S} \in \mathcal{P}_{\mathbb{S}}^{\mathrm{cons}}$ satisfies $R(P_\mathbb{S}) \leq \epsilon$. Then there exist $Q_\mathbb{S}^0 \in \mathcal{P}_\mathbb{S}^0$ and $Q_\mathbb{S} \in \mathcal{P}_\mathbb{S}$ such that $P_\mathbb{S} = (1-\epsilon) Q_\mathbb{S}^0 + \epsilon Q_\mathbb{S}$. Since $\mathcal{P}_\mathbb{S}^0 \subseteq \mathcal{P}_\mathbb{S}^\mathrm{cons}$, it follows that if $S_1,S_2 \in \mathbb{S}$ have $S_1 \cap S_2 \neq \emptyset$, then
\[
Q_{S_1}^{S_1 \cap S_2} = \frac{1}{\epsilon}\bigl\{P_{S_1}^{S_1 \cap S_2} - (1-\epsilon)Q_{S_1}^{0,S_1 \cap S_2}\bigr\} = \frac{1}{\epsilon}\bigl\{P_{S_2}^{S_1 \cap S_2} - (1-\epsilon)Q_{S_2}^{0,S_1 \cap S_2}\bigr\} = Q_{S_2}^{S_1 \cap S_2};
\]
in other words, $Q_\mathbb{S} \in \mathcal{P}_\mathbb{S}^\mathrm{cons}$.  Thus, if $P_\mathbb{S} \in \mathcal{P}_{\mathbb{S}}^{\mathrm{cons}}$, then $R(P_\mathbb{S}) \leq \epsilon$ if and only if $
P_\mathbb{S} \in (1-\epsilon) \mathcal{P}_\mathbb{S}^0 + \epsilon \mathcal{P}_\mathbb{S}^\mathrm{cons}$, which holds if and only if 
\begin{equation}
\label{Eq:iff}
P_\mathbb{S} \in \mathcal{P}_\mathbb{S}^{0,*} + \epsilon \mathcal{P}_\mathbb{S}^\mathrm{cons,**} = \epsilon ( \mathcal{P}_\mathbb{S}^{0,*} + \mathcal{P}_\mathbb{S}^{\mathrm{cons},**}) =: \mathcal{P}_\mathbb{S}^{\epsilon,*}.
\end{equation}
Now $\mathcal{P}_\mathbb{S}^{1,*}$ is a convex polyhedral set, so there exist $B \in \mathbb{R}^{F \times \mathcal{X}_\mathbb{S}}$ and $b \in \mathbb{R}^F$ such that
\[
    \mathcal{P}_\mathbb{S}^{\epsilon,*} = \bigl\{ p_{\mathbb{S}} \equiv P_\mathbb{S} \in \mathcal{P}_\mathbb{S}^{\mathrm{cons},*} : B p_\mathbb{S} \geq -\epsilon b \bigl\},
\]
where the equivalence here indicates that $p_{\mathbb{S}}$ is the probability mass sequence corresponding to $P_{\mathbb{S}}$.  Since $0_{\mathbb{S}} \in \mathcal{P}_\mathbb{S}^{\epsilon,*}$, we must have $b \in [0,\infty)^F$ and, by rescaling the rows of $B$ if necessary, we may assume that $b \in \{0,1\}^F$.  We may therefore partition $B = \begin{pmatrix} B_1 \\ B_2 \end{pmatrix}$, where $B_1 \in \mathbb{R}^{(F-m) \times \mathcal{X}_\mathbb{S}}$ and $B_2 \in \mathbb{R}^{m \times \mathcal{X}_\mathbb{S}}$ are such that
\begin{equation}
\label{Eq:PSepsilon*}
     \mathcal{P}_\mathbb{S}^{\epsilon,*} = \bigl\{ p_{\mathbb{S}} \equiv P_\mathbb{S} \in \mathcal{P}_\mathbb{S}^{\mathrm{cons},*} : B_1 p_\mathbb{S} \geq -\epsilon , B_2 p_\mathbb{S} \geq 0 \bigl\}.
\end{equation}
In fact, however, we claim that $m=0$, so that $b = 1_F$.  To see this, note first that $(\mathcal{P}_\mathbb{S}^{\epsilon,*})_{\epsilon \geq 0}$ is an increasing family, by~\eqref{Eq:PSepsilon*}.  Moreover, if $\lambda \geq 0$ and $P_\mathbb{S} \in \mathcal{P}_\mathbb{S}^{\mathrm{cons}}$, then $\lambda \cdot P_{\mathbb{S}} \in \lambda \mathcal{P}_\mathbb{S}^{\mathrm{cons},**} \subseteq \lambda(\mathcal{P}_\mathbb{S}^{0,*} + \mathcal{P}_\mathbb{S}^{\mathrm{cons},**}) = \mathcal{P}_{\mathbb{S}}^{\lambda,*}$, and hence $\bigcup_{\epsilon \geq 0} \mathcal{P}_\mathbb{S}^{\epsilon,*} = \mathcal{P}_\mathbb{S}^{\mathrm{cons},*}$.  But
\[
\bigcup_{\epsilon \geq 0} \mathcal{P}_\mathbb{S}^{\epsilon,*} = \bigl\{ p_{\mathbb{S}} \equiv P_\mathbb{S} \in \mathcal{P}_\mathbb{S}^{\mathrm{cons},*} : B_2 p_\mathbb{S} \geq 0 \bigl\},
\]
and we conclude that $m=0$, as required.  Therefore, by~\eqref{Eq:iff}, when $p_{\mathbb{S}} \equiv P_\mathbb{S} \in \mathcal{P}_\mathbb{S}^\mathrm{cons}$, we have
\begin{equation}
\label{Eq:RPS2}
    R(P_\mathbb{S}) = \inf\{\epsilon > 0: P_\mathbb{S} \in \mathcal{P}_{\mathbb{S}}^{\epsilon,*}\} = \|Bp_\mathbb{S}\|_\infty.
\end{equation}

We now argue that $f_\mathbb{S}^{(1)},\ldots,f_\mathbb{S}^{(F)}$ can be taken to be scalar multiples of the rows of $B$.  We may regard $\mathcal{P}_\mathbb{S}^{\mathrm{cons},*}$ as a convex cone in $[0,\infty)^{\mathcal{X}_{\mathbb{S}}}$, this cone is not full-dimensional (due to the consistency constraints), but if instead we regard it as a subset of its affine hull, then we will be able to express it uniquely as an intersection of halfspaces.  To see this, note that the consistency constraints are linear, so there exist $d_0 \leq |\mathcal{X}_\mathbb{S}|$ and $U \in \mathbb{R}^{\mathcal{X}_\mathbb{S} \times d_0}$ of full column rank such that
\[
    \mathcal{P}_\mathbb{S}^{\mathrm{cons},*} = \{ Uy : Uy \geq 0, y \in \mathbb{R}^{d_0} \}.
\]
Writing $f_\mathbb{S}^{(1)},\ldots,f_\mathbb{S}^{(M)}$ for the extreme points of $\{f_{\mathbb{S}} \in \mathcal{G}_\mathbb{S}^+:f_{\mathbb{S}} \leq |\mathbb{S}| - 1\}$, we have
\begin{align*}
    \mathcal{Y}^{1,*} := \{y \in \mathbb{R}^{d_0} : Uy \in \mathcal{P}_\mathbb{S}^{1,*} \} &= \{ y \in \mathbb{R}^{d_0}: Uy \geq 0, BUy \geq -1 \} \\
    &= \Bigl\{ y \in \mathbb{R}^{d_0}: Uy \geq 0, \min_{\ell \in [M]} (f_\mathbb{S}^{(\ell)})^T Uy  \geq -|\mathbb{S}|\Bigr\}.
\end{align*}
Since $\mathcal{Y}^{1,*}$ is a  full-dimensional, convex subset of $\mathbb{R}^{d_0}$, the uniqueness of halfspace representations means that by relabelling if necessary, we may assume that each row of $BU$ is $(f_\mathbb{S}^{(\ell)})^T U / |\mathbb{S}|$ for some $\ell\in [F]$. Hence $\mathcal{Y}^{1,*}=\bigl\{y\in \mathbb{R}^{d_0}:Uy \geq 0, \min_{\ell \in [F]} (f_\mathbb{S}^{(\ell)})^T Uy  \geq -|\mathbb{S}|\bigr\}$, and
\[
    \mathcal{P}_\mathbb{S}^{1,*} = \Bigl\{ p_\mathbb{S} \in \mathcal{P}_\mathbb{S}^{\mathrm{cons},*} : \min_{\ell \in [F]} (f_\mathbb{S}^{(\ell)})^T p_\mathbb{S} \geq -|\mathbb{S}| \Bigr\}.
\]
It therefore follows from~\eqref{Eq:RPS2} that, when $P_\mathbb{S} \in \mathcal{P}_\mathbb{S}^\mathrm{cons}$, we have
\begin{equation}
\label{Eq:FacetMax}
    R(P_\mathbb{S}) = \max_{\ell \in [F]} R(P_\mathbb{S}, f_\mathbb{S}^{(\ell)})_+.
\end{equation}
Having characterised the incompatibility index for consistent distributions, we finally prove the given bounds on this index in the general case.  To see the lower bound, let $S_1,S_2 \in \mathbb{S}$ be such that $S_1 \cap S_2 \neq \emptyset$, and let $E \subseteq \mathcal{X}_{S_1 \cap S_2}$.  Define $f_\mathbb{S}^{S_1,S_2,E} = (f_S^{S_1,S_2,E}:S \in \mathbb{S}) \in \mathcal{G}_{\mathbb{S}}$ by
\[
    f_S^{S_1,S_2,E}(x_S) := \left\{\begin{array}{ll}
      1   & \text{ if } S=S_1, \, x_{S_1 \cap S_2} \in E  \\
        -1 & \text{ if } S=S_2, \, x_{S_1 \cap S_2} \in E \\
        0 & \text{ otherwise.}
    \end{array} \right.
\]
It is straightforward to check that $f_\mathbb{S}^{S_1,S_2,E} \in \mathcal{G}_\mathbb{S}^+$: if $x \in \mathcal{X}$ is such that $x_{S_1 \cap S_2} \in E$ then
\[
    \sum_{S \in \mathbb{S}} f_S^{S_1,S_2,E}(x_S) = f_{S_1}^{S_1,S_2,E}(x_{S_1}) + f_{S_2}^{S_1,S_2,E}(x_{S_2}) = 1 - 1 =0,
\]
and if $x$ is such that $x_{S_1 \cap S_2} \not\in E$ then $f_S^{S_1,S_2,E}(x_S)=0$ for all $S \in \mathbb{S}$. We also have that
\begin{align*}
     R(P_\mathbb{S}, f_\mathbb{S}^{S_1,S_2,E}) &= - \frac{1}{|\mathbb{S}|} \biggl\{ \sum_{x_{S_1} \in \mathcal{X}_{S_1} : x_{S_1 \cap S_2} \in E} P_{S_1}(\{x_{S_1}\}) - \sum_{x_{S_2} \in \mathcal{X}_{S_2} : x_{S_1 \cap S_2} \in E} P_{S_2}(\{x_{S_2}\}) \biggr\} \\
    &= \frac{1}{|\mathbb{S}|} \bigl\{ P_{S_2}^{S_1 \cap S_2}(E) - P_{S_1}^{S_1 \cap S_2}(E) \bigr\}.
\end{align*}
We conclude that
\begin{align*}
    R(P_\mathbb{S}) &\geq \max \biggl\{ \max_{\ell \in [F]} R(P_\mathbb{S}, f_\mathbb{S}^{(\ell)})_+, \max_{S_1,S_2 \in \mathbb{S} : S_1 \cap S_2 \neq \emptyset} \max_{E \subseteq \mathcal{X}_{S_1 \cap S_2}} R(P_\mathbb{S}, f_\mathbb{S}^{S_1,S_2,E}) \biggr\} \\
    &= \max\biggl\{ \max_{\ell\in [F]} R(P_\mathbb{S}, f_\mathbb{S}^{(\ell)})_+, \frac{1}{|\mathbb{S}|} \max_{S_1,S_2 \in \mathbb{S} : S_1 \cap S_2 \neq \emptyset} d_\mathrm{TV} \bigl( P_{S_1}^{S_1 \cap S_2}, P_{S_2}^{S_1 \cap S_2} \bigr) \biggr\}.
\end{align*}
This establishes the lower bound, and we now turn to the upper bound.  Given sequences of signed measures $P_\mathbb{S},Q_\mathbb{S} \in \{\lambda_1\mathcal{P}_\mathbb{S} - \lambda_2\mathcal{P}_\mathbb{S}:\lambda_1,\lambda_2\geq 0\}$, we define their total variation distance by
\[
d_{\mathrm{TV}}(P_\mathbb{S},Q_{\mathbb{S}}) := \sum_{S \in \mathbb{S}} \sup_{A_S \in \mathcal{A}_S} |P_S(A_S) - Q_S(A_S)|.
\]
Now, given any $P_\mathbb{S} \in \mathcal{P}_\mathbb{S}$ and $P_\mathbb{S}^{\mathrm{cons},*} \in \mathcal{P}_\mathbb{S}^{\mathrm{cons},*}$, 
we have by~\eqref{Eq:FacetMax} and the fact (quoted at the start of the proof) that all extreme points of $\mathcal{G}_\mathbb{S}^+$ take values in $[-1,|\mathbb{S}|-1]^{\mathcal{X}_\mathbb{S}}$ that
\begin{align}
\label{Eq:RDecomp}
    R(P_\mathbb{S}) &= \frac{1}{|\mathbb{S}|} \sup_{f_\mathbb{S} \in \mathcal{G}_\mathbb{S}^+}\{ - f_\mathbb{S}^T (p_\mathbb{S} - p_\mathbb{S}^{\mathrm{cons},*} + p_\mathbb{S}^{\mathrm{cons},*}) \} \nonumber \\
    & \leq \frac{1}{|\mathbb{S}|} \biggl[ \sup_{f_\mathbb{S} \in \mathcal{G}_\mathbb{S}^+}\{ - f_\mathbb{S}^T (p_\mathbb{S} - p_\mathbb{S}^{\mathrm{cons},*}) \}  + \sup_{f_\mathbb{S} \in \mathcal{G}_\mathbb{S}^+} (- f_\mathbb{S}^T p_\mathbb{S}^{\mathrm{cons},*}) \biggr] \nonumber \\
    & = \frac{1}{|\mathbb{S}|} \biggl[ \sup_{f_\mathbb{S} \in \mathcal{G}_\mathbb{S}^+}\{ - f_\mathbb{S}^T (p_\mathbb{S} - p_\mathbb{S}^{\mathrm{cons},*}) \} + \max_{\ell \in [F]} \{-  (f_\mathbb{S}^{(\ell)})^T (p_\mathbb{S}^{\mathrm{cons},*} - p_\mathbb{S} + p_\mathbb{S}) \} \biggr] \nonumber \\
    & \leq \frac{1}{|\mathbb{S}|} \biggl[ \sup_{f_\mathbb{S} \in \mathcal{G}_\mathbb{S}^+}\{ - f_\mathbb{S}^T (p_\mathbb{S} - p_\mathbb{S}^{\mathrm{cons},*}) \} + \sup_{f_\mathbb{S} \in \mathcal{G}_\mathbb{S}^+}\{ - f_\mathbb{S}^T ( p_\mathbb{S}^{\mathrm{cons},*} - p_\mathbb{S}) \} +\max_{\ell \in [F]} \{-  (f_\mathbb{S}^{(\ell)})^T p_\mathbb{S} \} \biggr] \nonumber\\
    & \leq 2d_{\mathrm{TV}}(P_\mathbb{S},P_\mathbb{S}^{\mathrm{cons},*}) + \frac{1}{|\mathbb{S}|} \max_{\ell \in [F]} \{-  (f_\mathbb{S}^{(\ell)})^T p_\mathbb{S} \}.
\end{align}
We proceed by constructing an element of $\mathcal{P}_\mathbb{S}^{\mathrm{cons},*}$ whose total variation distance to $P_\mathbb{S}$ can be controlled.  For $\omega \in \{0,1\}^\mathbb{S}$, write $T_\omega:= \cap_{S: \omega_S=1}S$ and $|\omega| :=\sum_{S \in \mathbb{S}} \omega_S$. Define $\tilde{p}_\mathbb{S} \in \mathbb{R}^{\mathcal{X}_\mathbb{S}}$ by
\[
    \tilde{p}_{S_0}(x_{S_0}) := p_{S_0}(x_{S_0}) + \sum_{\omega \in \{0,1\}^\mathbb{S}: \omega_{S_0}=1, T_\omega \neq \emptyset} \frac{\lambda_{|\omega|} |\mathcal{X}_{T_\omega}|}{ |\mathcal{X}_{S_0}|} \sum_{S: \omega_S=1}\{p_S^{T_\omega}(x_{T_\omega}) - p_{S_0}^{T_\omega}(x_{T_\omega})\}
\]
with $\lambda_{|\omega|} := \frac{(-1)^{|\omega|}}{|\omega|(|\omega|-1)} \mathbbm{1}_{\{|\omega| \geq 2\}}$. Although $\tilde{p}_\mathbb{S}$ may take negative values, we will see that it satisfies all the linear constraints of consistency.  To see this, let $S_1,S_2 \in \mathbb{S}$ be such that $S_1 \cap S_2 \neq \emptyset$ and $x_{S_1 \cap S_2} \in \mathcal{X}_{S_1 \cap S_2}$, and write $\Omega_{\mathbb{S}}^{a,b} := \{\omega \in \{0,1\}^{\mathbb{S}}:T_\omega \neq \emptyset, \omega_{S_1} = a, \omega_{S_2} = b\}$ for $a,b \in \{0,1\}$.  Observe that if $A \subseteq B \subseteq [d]$, then $|\mathcal{X}_B|/|\mathcal{X}_A| = |\mathcal{X}_{B \cap A^c}|$.  Thus, in particular, when $\omega \in \Omega_\mathbb{S}^{1,0}$ for instance, we have
\[
    \frac{|\mathcal{X}_{T_\omega}| | \mathcal{X}_{S_1 \cap S_2^c \cap T_\omega^c}| |\mathcal{X}_{S_1 \cap S_2}|}{|\mathcal{X}_{T_\omega \cap S_2}||\mathcal{X}_{S_1}|} = \frac{|\mathcal{X}_{T_\omega \cap S_2^c}||\mathcal{X}_{S_1 \cap S_2^c \cap T_\omega^c}|}{|\mathcal{X}_{S_1 \cap S_2^c}|} = \frac{|\mathcal{X}_{T_\omega \cap S_2^c}|}{|\mathcal{X}_{S_1 \cap S_2^c \cap T_\omega}|} = 1.
\]
Hence
\begin{align*}
    &\tilde{p}_{S_1}^{S_1 \cap S_2}(x_{S_1 \cap S_2}) - \tilde{p}_{S_2}^{S_1 \cap S_2}(x_{S_1 \cap S_2}) = \sum_{\substack{x_{S_1} \in \mathcal{X}_{S_1} : \\ (x_{S_1})_{S_1 \cap S_2} = x_{S_1 \cap S_2}}} \tilde{p}_{S_1}(x_{S_1}) - \sum_{\substack{x_{S_2} \in \mathcal{X}_{S_2} : \\ (x_{S_2})_{S_1 \cap S_2} = x_{S_1 \cap S_2}}} \tilde{p}_{S_2}(x_{S_2}) \\
    & = p_{S_1}^{S_1 \cap S_2}(x_{S_1 \cap S_2}) - p_{S_2}^{S_1 \cap S_2}(x_{S_1 \cap S_2}) + \sum_{\omega \in \Omega_{\mathbb{S}}^{1,1}} \lambda_{|\omega|} \sum_{S : \omega_S=1} \biggl[ \frac{|\mathcal{X}_{T_\omega}|}{|\mathcal{X}_{S_1}|} |\mathcal{X}_{S_1\cap S_2^c}| \{p_S^{T_\omega}(x_{T_\omega}) - p_{S_1}^{T_\omega}(x_{T_\omega})\} \\
    &  \hspace{250pt} - \frac{|\mathcal{X}_{T_\omega}|}{|\mathcal{X}_{S_2}|} |\mathcal{X}_{S_1^c\cap S_2}| \{p_S^{T_\omega}(x_{T_\omega}) - p_{S_2}^{T_\omega}(x_{T_\omega})\} \biggr] \\
    & \hspace{100pt}+ \sum_{\omega \in \Omega_{\mathbb{S}}^{1,0}} \lambda_{|\omega|} \sum_{S : \omega_S=1} \frac{|\mathcal{X}_{T_\omega}|}{|\mathcal{X}_{S_1}|} |\mathcal{X}_{S_1 \cap S_2^c \cap T_\omega^c}| \{p_S^{T_\omega \cap S_2}(x_{T_\omega \cap S_2}) - p_{S_1}^{T_\omega \cap S_2}(x_{T_\omega \cap S_2}) \} \\
    & \hspace{100pt}- \sum_{\omega \in \Omega_{\mathbb{S}}^{0,1}} \lambda_{|\omega|} \sum_{S : \omega_S=1} \frac{|\mathcal{X}_{T_\omega}|}{|\mathcal{X}_{S_2}|} |\mathcal{X}_{S_1^c \cap S_2 \cap T_\omega^c}| \{p_S^{T_\omega \cap S_1}(x_{T_\omega \cap S_1}) - p_{S_2}^{T_\omega \cap S_1}(x_{T_\omega \cap S_1}) \} \\
    & = p_{S_1}^{S_1 \cap S_2}(x_{S_1 \cap S_2}) - p_{S_2}^{S_1 \cap S_2}(x_{S_1 \cap S_2}) - \sum_{\omega \in \Omega_{\mathbb{S}}^{1,1}} |\omega| \lambda_{|\omega|}   \frac{|\mathcal{X}_{T_{\omega}}|}{|\mathcal{X}_{S_1 \cap S_2}|} \{ p_{S_1}^{T_{\omega}}(x_{T_{\omega}}) - p_{S_2}^{T_{\omega}}(x_{T_{\omega}}) \} \\
    & \hspace{100pt}+ \sum_{\omega' \in\Omega_{\mathbb{S}}^{1,1}} \lambda_{|\omega'|-1} \frac{|\mathcal{X}_{T_{\omega'}}|}{|\mathcal{X}_{S_1 \cap S_2}|} \sum_{S: \omega'_S=1}(1-\mathbbm{1}_{\{S=S_2\}} ) \{p_{S}^{T_{\omega'}}(x_{T_{\omega'}}) - p_{S_1}^{T_{\omega'}}(x_{T_{\omega'}}) \}\\
    & \hspace{100pt}- \sum_{\omega'\in \Omega_{\mathbb{S}}^{1,1}} \lambda_{|\omega'|-1} \frac{|\mathcal{X}_{T_{\omega'}}|}{|\mathcal{X}_{S_1 \cap S_2}|} \sum_{S: \omega'_S=1}(1-\mathbbm{1}_{\{S=S_1\}} ) \{p_{S}^{T_{\omega'}}(x_{T_{\omega'}}) - p_{S_2}^{T_{\omega'}}(x_{T_{\omega'}}) \} \\
    & = p_{S_1}^{S_1 \cap S_2}(x_{S_1 \cap S_2}) - p_{S_2}^{S_1 \cap S_2}(x_{S_1 \cap S_2}) - \sum_{\omega \in \Omega_{\mathbb{S}}^{1,1}} |\omega| \lambda_{|\omega|}   \frac{|\mathcal{X}_{T_\omega}|}{|\mathcal{X}_{S_1 \cap S_2}|} \{ p_{S_1}^{T_\omega}(x_{T_\omega}) - p_{S_2}^{T_\omega}(x_{T_\omega}) \} \\
    & \hspace{20pt}- \sum_{\omega \in \Omega_{\mathbb{S}}^{1,1}} \lambda_{|\omega|-1} (|\omega|-2) \frac{|\mathcal{X}_{T_{\omega}}|}{|\mathcal{X}_{S_1 \cap S_2}|} \{p_{S_1}^{T_{\omega}}(x_{T_{\omega}}) - p_{S_2}^{T_{\omega}}(x_{T_{\omega}}) \}=0,
\end{align*}
where the final equality holds because $(\lambda_r)$ satisfies $\lambda_2=1/2$ and $r\lambda_r=-(r-2)\lambda_{r-1}$ for $r \geq 3$. The total negative mass of $\tilde{p}_\mathbb{S}$ satisfies
\begin{align}
\label{Eq:NegMassBound}
    \sum_{S_0 \in \mathbb{S}} &\sum_{x_{S_0} \in \mathcal{X}_{S_0}} \tilde{p}_{S_0}(x_{S_0})_- \leq d_\mathrm{TV}(P_\mathbb{S},\tilde{P}_\mathbb{S}) \nonumber \\
    &\leq \sum_{S_0 \in \mathbb{S}}  \sum_{\substack{\omega:|\omega| \geq 2,\\ \omega_{S_0}=1, T_\omega \neq \emptyset}} \frac{ |\mathcal{X}_{T_\omega}|}{|\mathcal{X}_{S_0}||\omega|(|\omega|-1)} \sum_{S : \omega_S=1} \sum_{x_{S_0} \in \mathcal{X}_{S_0}} \bigl[(-1)^{|\omega|} \{p_S^{T_\omega}(x_{T_\omega}) - p_{S_0}^{T_\omega}(x_{T_\omega}) \}\bigr]_-  \nonumber \\
    &\leq \sum_{S_0 \in \mathbb{S}} \sum_{\substack{\omega : |\omega|\geq 2, \\ \omega_{S_0}=1,T_\omega \neq \emptyset}} \frac{1}{|\omega|-1} \max_{S : \omega_S=1} d_\mathrm{TV}( p_S^{T_\omega}, p_{S_0}^{T_\omega}) \nonumber \\
    & \leq \biggl( \sum_{\omega: |\omega| \geq 2, T_\omega \neq \emptyset} \frac{|\omega|}{|\omega|-1} \biggr) \max_{S,S_0 \in \mathbb{S}: S \cap S_0 \neq \emptyset} d_\mathrm{TV}(p_{S}^{S \cap S_0}, p_{S_0}^{S \cap S_0}) \nonumber \\
    &\leq 2^{|\mathbb{S}|+1} \max_{S,S_0 \in \mathbb{S}: S \cap S_0 \neq \emptyset} d_\mathrm{TV}(p_{S}^{S \cap S_0}, p_{S_0}^{S \cap S_0}).
\end{align}
Now define $\check{P}_{\mathbb{S}} \in \{\lambda \cdot \mathcal{P}_\mathbb{S} : \lambda \geq 0 \}$ with mass function $\check{p}_{\mathbb{S}}$ given by
\[
\check{p}_\mathbb{S} := \tilde{p}_\mathbb{S} +\mathbb{A}\biggl(\sum_{x \in \mathcal{X}} \delta_x \sum_{S \in \mathbb{S}} \frac{\tilde{p}_S(x_S)_-}{|\mathcal{X}_{S^c}|}  \biggr) 
\]
where $\delta_y \in \{0,1\}^\mathcal{X}$ denotes a Dirac point mass on $y \in \mathcal{X}$. We see that this is non-negative by writing
\[
\check{p}_S(x_S)=\tilde{p}_S(x_S) + \sum_{y:y_S=x_S} \sum_{T \in \mathbb{S}} \frac{\tilde{p}_T(y_T)_-}{|\mathcal{X}_{T^c}|} \geq \tilde{p}_S(x_S) + \tilde{p}_S(x_S)_- \geq 0.
\]
Since $\tilde{p}_\mathbb{S}$ satisfies the consistency constraints and $\check{p}_\mathbb{S}$ is formed by adding a compatible sequence of marginal measures to it, we have $\check{P}_{\mathbb{S}} \in \mathcal{P}_\mathbb{S}^{\mathrm{cons},*}$.  Moreover, $\check{p}_\mathbb{S} \geq \tilde{p}_\mathbb{S}$ and
\begin{align*}
\sum_{S \in \mathbb{S}}\sum_{x_S \in \mathcal{X}_S} \bigl\{\check{p}_S(x_S) - \tilde{p}_S(x_S)\bigr\} &= 1_{\mathcal{X}_\mathbb{S}}^T \mathbb{A} \biggl(\sum_{x \in \mathcal{X}} \delta_x \sum_{S \in \mathbb{S}} \frac{\tilde{p}_S(x_S)_-}{|\mathcal{X}_{S^c}|} \biggr) \\
    & = |\mathbb{S}| 1_\mathcal{X}^T \biggl(\sum_{x \in \mathcal{X}} \delta_x \sum_{S \in \mathbb{S}} \frac{\tilde{p}_S(x_S)_-}{|\mathcal{X}_{S^c}|} \biggr) \\
    & = |\mathbb{S}| \sum_{x \in \mathcal{X}} \sum_{S \in \mathbb{S}} \frac{\tilde{p}_S(x_S)_-}{|\mathcal{X}_{S^c}|} \\ 
    & = |\mathbb{S}| \sum_{S \in \mathbb{S}} \sum_{x_S \in \mathcal{X}_S} \tilde{p}_S(x_S)_- \\
    & \leq |\mathbb{S}| 2^{|\mathbb{S}|+1} \max_{S_1,S_2 \in \mathbb{S}: S_1 \cap S_2 \neq \emptyset} d_\mathrm{TV}(p_{S_1}^{S_1 \cap S_2}, p_{S_2}^{S_1 \cap S_2}).
\end{align*}
From this and~\eqref{Eq:NegMassBound}, we conclude that
\[
    d_{\mathrm{TV}}(P_\mathbb{S}, \mathcal{P}^{\mathrm{cons},*}) \leq |\mathbb{S}| 2^{|\mathbb{S}|+2} \max_{S_1,S_2 \in \mathbb{S}: S_1 \cap S_2 \neq \emptyset} d_\mathrm{TV}(p_{S_1}^{S_1 \cap S_2}, p_{S_2}^{S_1 \cap S_2}),
\]
and the result follows.
\end{proof}

\begin{proof}[Proof of Theorem~\ref{Prop:DiscreteTest}]
 We prove the result when $F' \geq 1$, and note that if $F'=0$ then simpler arguments apply. By Proposition~\ref{Prop:L1Projection} and the discussion after~\eqref{Eq:Equivalence}, we have
\begin{align}
\label{Eq:SizeDecomp}
&\mathbb{P}\bigl( R(\hat{P}_\mathbb{S}) \geq C_\alpha'\bigr) \nonumber \\ 
&\leq \mathbb{P}\biggl(D_R  \max_{\ell \in [F']}  R(\hat{P}_\mathbb{S}, f_\mathbb{S}^{(\ell),'} ) \geq \frac{C_\alpha'}{2} \biggr) + \mathbb{P} \biggl( \max_{S_1,S_2 \in \mathbb{S}} d_\mathrm{TV}( \hat{P}_{S_1}^{S_1 \cap S_2}, \hat{P}_{S_2}^{S_1 \cap S_2} ) \geq \frac{C_\alpha'}{|\mathbb{S}| 2^{|\mathbb{S}|+3}} \biggr) \nonumber \\
    & \leq F' \max_{\ell \in [F']} \mathbb{P}\biggl( R(\hat{P}_\mathbb{S}, f_\mathbb{S}^{(\ell),'} ) \geq \frac{C_\alpha'}{2D_R } \biggr) + \frac{|\mathbb{S}|(|\mathbb{S}| \!-\! 1)}{2} \! \max_{S_1,S_2 \in \mathbb{S}} \! \mathbb{P} \biggl( d_\mathrm{TV}( \hat{P}_{S_1}^{S_1 \cap S_2}, \hat{P}_{S_2}^{S_1 \cap S_2} ) \geq \frac{C_\alpha'}{|\mathbb{S}| 2^{|\mathbb{S}|+3}} \biggr).
\end{align}
Observe that when $P_\mathbb{S} \in \mathcal{P}_\mathbb{S}^0$, we have for any $f_\mathbb{S} \in \mathcal{G}_\mathbb{S}^+$ that $\mathbb{E} R(\hat{P}_\mathbb{S}, f_\mathbb{S}) = R(P_\mathbb{S}, f_\mathbb{S}) \leq 0$.  By~\eqref{Eq:EmpiricalProcess} and Hoeffding's inequality, whenever $P_\mathbb{S} \in \mathcal{P}_\mathbb{S}^0$, we have for any $\ell \in [F']$ that
\begin{align*}
    \mathbb{P}\biggl( R(\hat{P}_\mathbb{S}, f_\mathbb{S}^{(\ell),'} ) \geq \frac{C_\alpha'}{2D_R} \biggr) &\leq \mathbb{P}\biggl( R(\hat{P}_\mathbb{S}, f_\mathbb{S}^{(\ell),'} ) - \mathbb{E} R(\hat{P}_\mathbb{S},f_\mathbb{S}^{(\ell),'}) \geq \frac{C_\alpha'}{2D_R} \biggr) \\
    &\leq |\mathbb{S}| \max_{S \in \mathbb{S}} \mathbb{P} \biggl( -\frac{1}{n_S} \sum_{i=1}^{n_S} \bigl\{ f_S^{(\ell),'}(X_{S,i}) - \mathbb{E} f_S^{(\ell),'}(X_{S,i}) \bigr\} \geq \frac{C_\alpha'}{2D_R} \biggr) \\
    & \leq |\mathbb{S}| \max_{S \in \mathbb{S}} \exp \biggl( - \frac{n_S (C_\alpha'/D_R)^2}{2 |\mathbb{S}|^2} \biggr) \leq \frac{\alpha}{2F'}.
\end{align*}
For the second term in~\eqref{Eq:SizeDecomp}, under $H_0'$, for any $S_1,S_2 \in \mathbb{S}$ with $S_1 \neq S_2$ and $S_1 \cap S_2 \neq \emptyset$, we have
\begin{align*}
    &\mathbb{P} \biggl( d_\mathrm{TV}( \hat{P}_{S_1}^{S_1 \cap S_2}, \hat{P}_{S_2}^{S_1 \cap S_2} ) \geq \frac{C_\alpha'}{|\mathbb{S}| 2^{|\mathbb{S}|+3}} \biggr) \\
    &= \mathbb{P} \biggl( \max_{A \subseteq \mathcal{X}_{S_1 \cap S_2}} \bigl| \hat{P}_{S_1}^{S_1 \cap S_2}(A) - P_{S_1}^{S_1 \cap S_2}(A) + P_{S_2}^{S_1 \cap S_2}(A) - \hat{P}_{S_2}^{S_1 \cap S_2}(A) \bigr| \geq \frac{C_\alpha'}{|\mathbb{S}| 2^{|\mathbb{S}|+3}} \biggr) \\
    & \leq 2^{|\mathcal{X}_{S_1 \cap S_2}|} \max_{A \subseteq \mathcal{X}_{S_1 \cap S_2}} \max_{k \in \{1,2\}} \mathbb{P} \biggl( \bigl| \hat{P}_{S_k}^{S_1 \cap S_2}(A) - P_{S_k}^{S_1 \cap S_2}(A)\bigr| \geq \frac{C_\alpha'}{|\mathbb{S}| 2^{|\mathbb{S}|+4}} \biggr) \\
    & \leq 2^{|\mathcal{X}_{S_1 \cap S_2}|+1} \exp \biggl( - \frac{(n_{S_1} \wedge n_{S_2}) (C_\alpha')^2 }{|\mathbb{S}|^2 2^{2|\mathbb{S}| + 7}} \biggr) \leq \frac{\alpha}{|\mathbb{S}|(|\mathbb{S}|-1)},
\end{align*}
where we have used the fact that $\bigl| \hat{P}_{S_k}^{S_1 \cap S_2}(A) - P_{S_k}^{S_1 \cap S_2}(A)\bigr| = \bigl| \hat{P}_{S_k}^{S_1 \cap S_2}(A^c) - P_{S_k}^{S_1 \cap S_2}(A^c)\bigr|$, and where the penultimate bound follows from Hoeffding's inequality. We have now established that $\mathbb{P}\bigl( R(\hat{P}_\mathbb{S}) \geq C_\alpha'\bigr) \leq \alpha$ whenever $P_\mathbb{S} \in \mathcal{P}_\mathbb{S}^0$. 

We now turn to the final part of Proposition~\ref{Prop:DiscreteTest}. Very similar arguments to those above based on Hoeffding's inequality show that
\[
    \mathbb{P} \biggl( \max_{\ell \in [F']} R(\hat{P}_\mathbb{S}, f_\mathbb{S}^{(\ell),'} ) < C_\alpha' \biggr) \leq \beta
\]
whenever
\[
    \max_{\ell \in [F']} R(P_\mathbb{S}, f_\mathbb{S}^{(\ell),'}) \geq C_\alpha' + |\mathbb{S}| \biggl\{ \frac{2 \log(F'|\mathbb{S}|/\beta)}{\min_{S \in \mathbb{S}} n_S} \biggr\}^{1/2}.
\]
Likewise, for any $S_1,S_2 \in \mathbb{S}$ with $S_1 \cap S_2 \neq \emptyset$,
\[
    \mathbb{P} \Bigl( d_\mathrm{TV}( \hat{P}_{S_1}^{S_1 \cap S_2}, \hat{P}_{S_2}^{S_1 \cap S_2} ) < |\mathbb{S}| C_\alpha' \Bigr) \leq \beta
\]
whenever
\[
    d_\mathrm{TV}( P_{S_1}^{S_1 \cap S_2}, P_{S_2}^{S_1 \cap S_2} ) \geq |\mathbb{S}| C_\alpha' +  \biggl\{ \frac{2}{n_{S_1} \wedge n_{S_2}} \log \biggl( \frac{2^{|\mathcal{X}_{S_1 \cap S_2}|+1}}{\beta} \biggr) \biggr\}^{1/2}.
\]
Now, by Proposition~\ref{Prop:L1Projection}, if $R(P_\mathbb{S}) \geq M(C_\alpha'+C_\beta')$ then we must either have
\[
    \max_{\ell \in [F']} R(P_\mathbb{S}, f_\mathbb{S}^{(\ell),'}) \geq \frac{M}{2D_R}(C_\alpha'+C_\beta') \quad \text{or} \quad \max_{S_1,S_2 \in \mathbb{S}} d_\mathrm{TV} \bigl( P_{S_1}^{S_1 \cap S_2}, P_{S_2}^{S_1 \cap S_2} \bigr) \geq \frac{M}{2^{|\mathbb{S}|+3}|\mathbb{S}|}(C_\alpha'+C_\beta').
\]
Since
\[
    C_\beta' \asymp_{|\mathbb{S}|} |\mathbb{S}| D_R \biggl\{ \frac{2 \log(F'|\mathbb{S}|/\beta)}{\min_{S \in \mathbb{S}} n_S} \biggr\}^{1/2} + \max_{\substack{S_1,S_2 \in \mathbb{S}:\\S_1 \cap S_2 \neq \emptyset}} \biggl\{ \frac{2}{n_{S_1} \wedge n_{S_2}} \log \biggl( \frac{2^{|\mathcal{X}_{S_1 \cap S_2}|+1}}{\beta} \biggr) \biggr\}^{1/2},
\]
the result follows.
\end{proof}

\begin{proof}[Proof of Theorem~\ref{Prop:rs2example}]
We establish the equality~\eqref{Eq:RPSd3} by providing matching upper and lower bounds, first providing the required lower bound on $R(P_\mathbb{S})$.  Given $A \subseteq [r]$ and $B \subseteq [s]$, we construct $f_\mathbb{S} \in \mathcal{G}_\mathbb{S}$ as follows.  Writing, for example, $f_{ij \bullet} := f_{\{1,2\}}(i,j)$, define 
\[
    f_{ij \bullet} := \left\{ \begin{array}{cc}
      2   & \text{ if } (i,j) \in A \times B  \\
      -1   & \text{ if } (i,j) \in A \times B^c \\
      -1   & \text{ if } (i,j) \in A^c \times B \\
      2   & \text{ if } (i,j) \in A^c \times B^c
    \end{array} \right., \quad (f_{i \bullet 1}, f_{i \bullet 2}) := \left\{ \begin{array}{cc}
     (-1, 2)    & \text{ if } i \in A  \\
     (2, -1)    & \text{ if } i \in A^c
    \end{array} \right.,
\]
and
\[
    (f_{\bullet j1},f_{\bullet j2}):= \left\{ \begin{array}{cc}
     (-1, 2)    & \text{ if } j \in B  \\
     (2, -1)    & \text{ if } j \in B^c
    \end{array} \right..
\]
It is straightforward to check that 
$f_\mathbb{S} \in \mathcal{G}_\mathbb{S}^+$ because, for instance, if $i \in A$ and $j \in B$, then
\[
\min(f_{ij\bullet} + f_{\bullet j1} + f_{i \bullet 1},f_{ij\bullet} + f_{\bullet j2} + f_{i \bullet 2})  = \min(2 -1 -1, 2+2+2) =0.
\]
Hence
\begin{align}
\label{Eq:rs2lowerbound}
    &3R(P_\mathbb{S}) \geq 3R(P_\mathbb{S}, f_\mathbb{S}) =- \sum_{i=1}^r \sum_{j=1}^s p_{ij \bullet} f_{ij \bullet} -\sum_{i=1}^r(p_{i\bullet 1} f_{i\bullet 1} + p_{i\bullet 2} f_{i\bullet 2}) - \sum_{j=1}^s(p_{\bullet j1} f_{\bullet j1} + p_{\bullet j2} f_{\bullet j2}) \nonumber \\
    &= -2(p_{AB\bullet} + p_{A^c B^c \bullet}) + (p_{A^cB\bullet} + p_{AB^c\bullet}) - 2(p_{A^c \bullet 1} + p_{A \bullet 2}) \nonumber \\
    & \hspace{100pt} + (p_{A \bullet 1} + p_{A^c \bullet 2}) - 2(p_{\bullet B^c 1} + p_{\bullet B 2}) + (p_{\bullet B1} + p_{\bullet B^c 2}) \nonumber \\
    & = -2( 2 p_{AB \bullet} + p_{\bullet \bullet \bullet} - p_{A \bullet \bullet} - p_{\bullet B \bullet}) + (p_{\bullet B \bullet} + p_{A \bullet \bullet} - 2p_{A B \bullet}) - 2(p_{\bullet \bullet 1}+p_{A\bullet \bullet} - 2p_{A \bullet 1}) \nonumber \\
    & \hspace{30pt} +(2p_{A \bullet 1} + p_{\bullet \bullet \bullet} -p_{A \bullet \bullet} - p_{\bullet \bullet 1}) - 2(p_{\bullet \bullet 1} + p_{\bullet B \bullet} - 2p_{\bullet B 1}) + (2p_{\bullet B 1} + p_{\bullet \bullet \bullet} - p_{\bullet B \bullet} - p_{\bullet \bullet 1}) \nonumber \\
    & = -6(p_{AB \bullet} + p_{\bullet \bullet 1} - p_{A \bullet 1} - p_{\bullet B 1}).
\end{align}
Since $A \subseteq [r], B \subseteq [s]$ were arbitrary, and since $f_\mathbb{S} \equiv 0 \in \mathcal{G}_\mathbb{S}^+$, the desired lower bound follows.

We now give the matching upper bound on $R(P_\mathbb{S})$. When $P_\mathbb{S} \in \mathcal{P}_{\mathbb{S}}^0$ we automatically have $R(P_\mathbb{S})=0$.  On the other hand, when $P_\mathbb{S} \notin \mathcal{P}_{\mathbb{S}}^0$, we relate $R(P_\mathbb{S})$ to the maximum two-commodity flow through the network shown in Figure~\ref{fig:rs2network}.  Recalling the matrix $\mathbb{A} = (\mathbb{A}_{(S,y_S),x})_{(S,y_S) \in \mathcal{X}_\mathbb{S},x \in \mathcal{X}} \in \{0,1\}^{\mathcal{X}_{\mathbb{S}} \times \mathcal{X}}$ from~\eqref{Eq:A}, for any $P_{\mathbb{S}} = (P_S:S \in \mathbb{S})$ with corresponding probability mass sequence  $p_{\mathbb{S}} = (p_{(S,y_S)}) \in [0,1]^{\mathcal{X}_{\mathbb{S}}}$, we may write
\begin{align}
\label{Eq:RPSEquality}
    R(P_\mathbb{S}) &= - \frac{1}{|\mathbb{S}|} \min \bigl\{ p_\mathbb{S}^T f_\mathbb{S}:f_\mathbb{S} \geq -1, \mathbb{A}^T f_\mathbb{S} \geq 0 \bigr\} \nonumber \\
    & = 1 - \frac{1}{|\mathbb{S}|} \min \bigl\{ p_\mathbb{S}^T y:y \in [0,\infty)^{\mathcal{X}_{\mathbb{S}}}, \mathbb{A}^T y \geq |\mathbb{S}| \cdot 1_{\mathcal{X}} \bigr\} \nonumber \\
    &= 1 - \min \bigl\{ p_\mathbb{S}^T z:z \in [0,\infty)^{\mathcal{X}_{\mathbb{S}}}, \mathbb{A}^T z \geq 1_{\mathcal{X}} \bigr\} \nonumber \\
    & = 1 - \max \bigl\{1_{\mathcal{X}}^T p: p \in [0,\infty)^{\mathcal{X}}, \mathbb{A}p \leq p_\mathbb{S} \bigr\}.
\end{align}
Here, the final equality follows from the strong duality theorem for linear programming \citep[e.g.][p.~83]{matousek2007understanding}, where we note that both the primal and dual problems have feasible solutions. 
It follows from this that
\begin{align}
\label{Eq:1minusdual}
    1 - R(P_\mathbb{S}) &= \max\bigl\{ 1_\mathcal{X}^T p : p \in [0,\infty)^\mathcal{X}, \mathbb{A}p \leq p_\mathbb{S} \bigr\}, \nonumber \\
    &= \max \biggl\{ \sum_{i=1}^r \sum_{j=1}^s (q_{ij1}+q_{ij2}) : \min_{i,j,k} q_{ijk} \geq 0, \, \max_{i,j} (q_{ij1}+q_{ij2} -p_{ij\bullet} ) \leq 0,  \nonumber \\
    &\hspace{3.5cm} \max_{i,k} \biggl( \sum_{j=1}^s q_{ijk} - p_{i \bullet k} \biggr) \leq 0, \max_{j,k} \biggl( \sum_{i=1}^r q_{ijk} - p_{\bullet jk} \biggr) \leq 0 \biggr\}.
\end{align}
Figure~\ref{fig:rs2network} represents a flow network where, for $k \in \{1,2\}$, commodity $k$ is transferred from source $s_k$ to sink $t_k$.  We think of $q_{ijk}$ as the flow of commodity $k$ from node $x_{ik}$ to node $y_{ij}^{(1)}$, and $\sum_{i=1}^r \sum_{j=1}^s q_{ijk}$ as being the total flow of commodity $k$ from source~$s_k$ to sink $t_k$.  Of this flow, at most $p_{i \bullet k}$ may go through $x_{ik}$, for each $i \in [r]$, corresponding to the constraint $\sum_{j=1}^s q_{ijk} \leq p_{i \bullet k}$. For each $i \in [r], j \in [s]$, the combined flow of both commodities from $x_{i1}$ and $x_{i2}$ through to $y_{ij}^{(2)}$ is bounded above by $p_{ij \bullet}$, corresponding to the constraint $q_{ij1} + q_{ij2} \leq p_{ij\bullet}$. For each $j \in [s]$ and $k\in\{1,2\}$, the subsequent flow of commodity $k$ through node $z_{jk}$ to $t_k$ is bounded by $p_{\bullet j k}$, corresponding to the constraint $\sum_{i=1}^r q_{ijk} \leq p_{\bullet jk}$.
 \begin{figure}
    \begin{tikzpicture}
    \centering
  \node (v1) at (0,-2) {};
    \node (v2) at (0,-3) {};
    \node (v3) at (3,0) {};
    \node (v4) at (3,-1) {};
    \node (v5) at (3,-4) {};
    \node (v6) at (3,-5) {};
    \node (v7) at (6,0) {};
    \node (v8) at (6,-4) {};
    \node (v9) at (8,0) {};
    \node (v10) at (8,-4) {};
    \node (v11) at (12,0) {};
    \node (v12) at (12,-4) {};
    \node (v13) at (14,0) {};
    \node (v14) at (14,-4) {};
    \node (v15) at (7.5,-6) {};
    \node (v16) at (8.5,-6) {};
    \node (v17) at (13.5,-6) {};
    \node (v18) at (14.5,-6) {};
    \node (v19) at (10.25,-8) {};
    \node (v20) at (11.75,-8) {};
    \fill (v1) circle (0.1) node [left] {$s_1$};
    \fill (v2) circle (0.1) node [left] {$s_2$};
    \fill (v3) circle (0.1) node [above] {$x_{11}$};
    \fill (v4) circle (0.1) node [above] {$x_{12}$};
    \fill (v5) circle (0.1) node [above] {$x_{r1}$};
    \fill (v6) circle (0.1) node [above] {$x_{r2}$};
    \fill (v7) circle (0.1) node [above] {$y_{11}^{(1)}$};
    \fill (v8) circle (0.1) node [above] {$y_{r1}^{(1)}$};
    \fill (v9) circle (0.1) node [above] {$y_{11}^{(2)}$};
    \fill (v10) circle (0.1) node [above] {$y_{r1}^{(2)}$};
    \fill (v11) circle (0.1) node [above] {$y_{1s}^{(1)}$};
    \fill (v12) circle (0.1) node [above] {$y_{rs}^{(1)}$};
    \fill (v13) circle (0.1) node [above] {$y_{1s}^{(2)}$};
    \fill (v14) circle (0.1) node [above] {$y_{rs}^{(2)}$};
    \fill (v15) circle (0.1) node [left] {$z_{11}$};
    \fill (v16) circle (0.1) node [right] {$z_{12}$};
    \fill (v17) circle (0.1) node [left] {$z_{s1}$};
    \fill (v18) circle (0.1) node [right] {$z_{s2}$};
    \fill (v19) circle (0.1) node [below] {$t_1$};
    \fill (v20) circle (0.1) node [below] {$t_2$};
    \draw [-{Latex[scale=1.5]}](0.3,-1.8) -- (2.7,-0.2);
    \draw [-{Latex[scale=1.5]}](0.3,-2.2) -- (2.7,-3.8);
    \draw [-{Latex[scale=1.5]}](0.3,-2.8) -- (2.7,-1.2);
    \draw [-{Latex[scale=1.5]}](0.3,-3.2) -- (2.7,-4.8);
    \draw[loosely dashed](3,-1.5) -- (3,-3.4);
    \draw [-{Latex[scale=1.5]}](3.3,0) -- (5.7,0);
    \draw [-{Latex[scale=1.5]}](3.3,-0.9) -- (5.7,-0.1);
    \draw [-{Latex[scale=1.5]}](3.3,-4) -- (5.7,-4);
    \draw [-{Latex[scale=1.5]}](3.3,-4.9) -- (5.7,-4.1);
    \draw[loosely dashed](6,-0.5) -- (6,-3.2);
    \draw [-{Latex[scale=1.5]}](6.3,0) -- (7.7,0);
    \draw [-{Latex[scale=1.5]}](6.3,-4) -- (7.7,-4);
    \draw[loosely dashed](8,-0.5) -- (8,-3.2);
    \draw[loosely dashed](8.4,0) -- (11.6,0);
    \draw[loosely dashed](8.4,-4) -- (11.6,-4);
    \draw[loosely dashed](8.4,-0.4) -- (11.6,-3.6);
    \draw[loosely dashed](12,-0.5) -- (12,-3.2);
    \draw[loosely dashed](14,-0.5) -- (14,-3.2);
    \draw [-{Latex[scale=1.5]}](12.3,0) -- (13.7,0);
    \draw [-{Latex[scale=1.5]}](12.3,-4) -- (13.7,-4);
    \draw [-{Latex[scale=1.5]}] ($(v3) + (0.3,0.1)$) to[out=20,in=160] ($(v11) + (-0.3,0.1)$);
    \draw [-{Latex[scale=1.5]}] ($(v4) + (0.3,-0.1)$) to[out=-10,in=200] ($(v11) + (-0.3,-0.1)$);
    \draw [-{Latex[scale=1.5]}] ($(v5) + (0.3,0.1)$) to[out=20,in=160] ($(v12) + (-0.3,0.1)$);
    \draw [-{Latex[scale=1.5]}] ($(v6) + (0.3,-0.1)$) to[out=-10,in=200] ($(v12) + (-0.3,-0.1)$);
    \draw [-{Latex[scale=1.5]}](7.95,-4.2) -- (7.55,-5.8);
    \draw [-{Latex[scale=1.5]}](8.05,-4.2) -- (8.45,-5.8);
    \draw [-{Latex[scale=1.5]}](13.95,-4.2) -- (13.55,-5.8);
    \draw [-{Latex[scale=1.5]}](14.05,-4.2) -- (14.45,-5.8);
    \draw[loosely dashed](9.5,-6) -- (12.5,-6);
    \draw [-{Latex[scale=1.5]}] ($(v9) + (-0.1,-0.3)$) to[out=250,in=100] ($(v15) + (-0.1,0.3)$);
    \draw [-{Latex[scale=1.5]}] ($(v9) + (0.1,-0.3)$) to[out=290,in=80] ($(v16) + (0.1,0.3)$);
    \draw [-{Latex[scale=1.5]}] ($(v13) + (-0.1,-0.3)$) to[out=250,in=100] ($(v17) + (-0.1,0.3)$);
    \draw [-{Latex[scale=1.5]}] ($(v13) + (0.1,-0.3)$) to[out=290,in=80] ($(v18) + (0.1,0.3)$);
    \draw[-{Latex[scale=1.5]}] (7.775,-6.2) -- (10.1125,-7.9);
    \draw[-{Latex[scale=1.5]}] (8.825,-6.2) -- (11.5875,-7.9);
    \draw[-{Latex[scale=1.5]}] (13.175,-6.2) -- (10.4125,-7.9);
    \draw[-{Latex[scale=1.5]}] (14.225,-6.2) -- (11.8875,-7.9);
\end{tikzpicture}
\caption{\label{fig:rs2network}Illustration of the flow network described in the proof of Theorem~\ref{Prop:rs2example}.  The capacity constraints are $c(s_k,x_{ik}) = p_{i\bullet k}$, $c(x_{ik},y_{ij}^{(1)
}) = \infty$, $c(y_{ij}^{(1)},y_{ij}^{(2)}) = p_{ij\bullet}$, $c(y_{ij}^{(2)},z_{jk}) = \infty$ and $c(z_{jk},t_2) = p_{\bullet jk}$ for $i \in [r]$, $j \in [s]$ and $k \in [2]$.}
\end{figure}
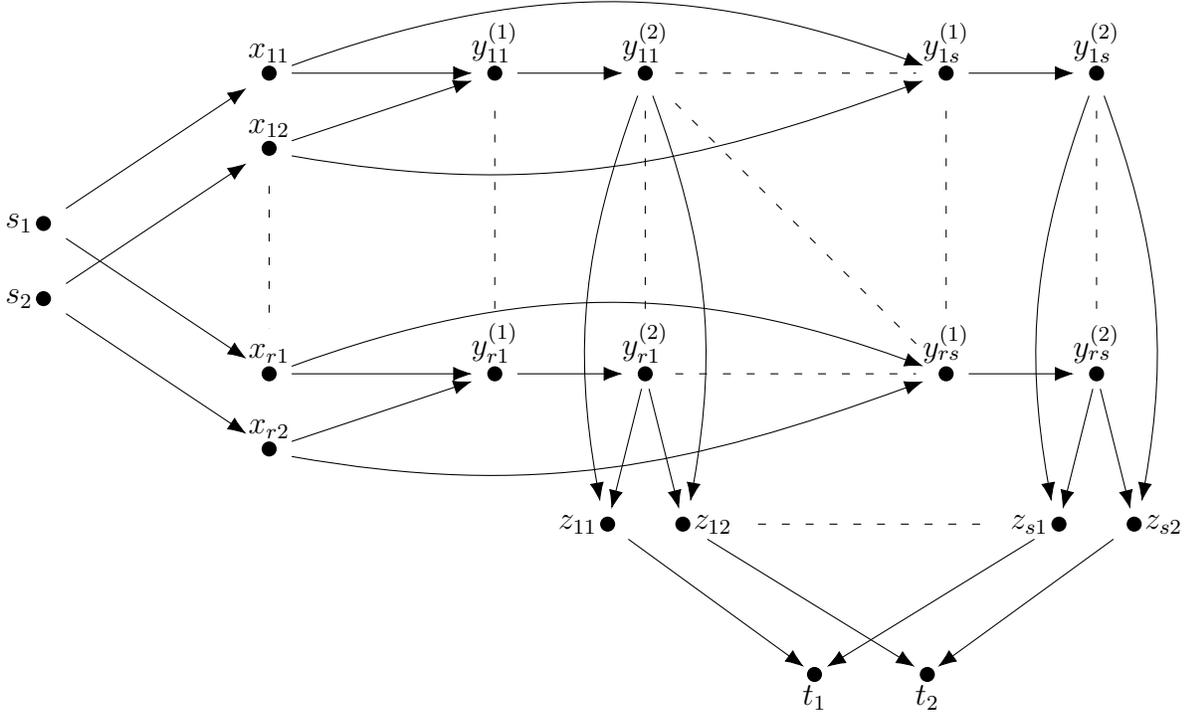

Having established the link between $R(P_\mathbb{S})$ and this network flow problem, we proceed to find a total flow that matches the upper bound implied by~\eqref{Eq:rs2lowerbound} and~\eqref{Eq:1minusdual}, i.e.
\begin{equation}
\label{Eq:ABObjFun}
    1 + 2 \min_{A \subseteq [r], B \subseteq [s]} (p_{AB \bullet} + p_{\bullet \bullet 1} - p_{A \bullet 1} - p_{\bullet B 1}) = \min_{A \subseteq [r], B \subseteq [s]}( p_{A^c \bullet 1} + p_{A \bullet 2} + p_{\bullet B^c 1} + p_{\bullet B 2} + p_{AB \bullet} + p_{A^c B^c \bullet}).
\end{equation}
The fact that the left-hand side of~\eqref{Eq:ABObjFun} is equal to the right-hand side relies on the consistency of $p_\mathbb{S}$. Let $A \subseteq [r]$ and $B \subseteq [s]$ be minimising sets in the above display, observing that the same choices minimise both left- and right-hand sides.  Then, for $i \in A$, we have
\[
    p_{AB \bullet} - p_{A \bullet 1} \leq p_{A\setminus \{i\} B \bullet} - p_{A \setminus \{i\} \bullet 1} = p_{AB \bullet} - p_{i B \bullet} - p_{A \bullet 1} + p_{i \bullet 1},
\]
so that $p_{i B \bullet} \leq p_{i \bullet 1}$. It is therefore possible to send a flow of commodity 1 of $p_{ij\bullet}$ from $s_1$ through $x_{i1}$ to $y_{ij}^{(2)}$, for each $(i,j) \in A \times B$. Similarly, by considering $i \in A^c$ and repeating the calculation above with $A \cup \{i\}$ in place of $A \setminus \{i\}$, we see that $p_{i B^c \bullet} \leq p_{i \bullet 2}$.  Hence a flow of commodity 2 of $p_{ij \bullet}$ can be sent from $s_2$ through $x_{i 2}$ to $y_{ij}^{(2)}$ for each $(i,j) \in A^c \times B^c$.  So far, then, we have shown how to send a flow of commodity 1 of $p_{AB\bullet}$ from $s_1$ to $\{z_{j1} : j \in B\}$, and a flow of commodity 2 of $p_{A^c B^c \bullet}$ from $s_2$ to $\{z_{j2}: j \in B^c\}$. 

We now claim that, for each $i \in A^c$, we may send a flow of commodity 1 of $p_{i\bullet 1}$ from~$s_1$ through $x_{i1}$ and $y_{iB}^{(2)}:=\{y_{ij}^{(2)}:j \in B\}$ to $z_{B1}:=\{z_{j1}:j\in B\}$, and that this flow together with the previous flow of commodity 1 can pass through $z_{B1}$ to $t_1$. 
To do this we use a generalisation of Hall's marriage theorem to one-commodity flows due to~\cite{gale1957theorem}. Each~$z_{j1}$ for $j \in B$ already has an incoming flow of $p_{Aj\bullet}$, so has a remaining capacity of $p_{\bullet j1} - p_{A j \bullet}$.  By Gale's theorem, the desired flow is therefore feasible if and only if, for every $A' \subseteq A^c$ and $B' \subseteq B$, we have
\[
    \sum_{i \in A'} p_{i \bullet 1} - \sum_{j \in B \setminus B'} (p_{\bullet j 1} - p_{A j \bullet}) \leq \sum_{i \in A'} \sum_{j \in B'}  p_{ij \bullet}.
\]
This condition is equivalent to the condition that, for all $A' \subseteq A^c$ and $B' \subseteq B$ we have
\[
    p_{(A \cup A')B' \bullet} - p_{(A \cup A') \bullet 1} - p_{\bullet B' 1} \geq p_{AB \bullet} - p_{A \bullet 1} - p_{\bullet B 1},
\]
but we know that this is true because $(A,B)$ are minimisers of the left-hand side of~\eqref{Eq:ABObjFun}.  Thus, the desired flow of commodity 1 is feasible.  Similarly, for each $i \in A$, we may send a flow of $p_{i \bullet 1}$ of commodity 2 from $s_2$ through $x_{i1}$ and $y_{iB^c}^{(2)}:=\{y_{ij}^{(2)}:j\in B^c\}$ to $z_{B^c1}:=\{z_{j1}:j\in B^c\}$, and this flow can pass through to $t_2$.  We have therefore now shown that we can send a combined flow of $p_{AB \bullet} + p_{A^c B^c \bullet} + p_{A^c \bullet 1} + p_{A \bullet 2}$ from the sources to the sinks.

Until this point, no flow has been routed through $z_{B2}$ or $z_{B^c 1}$.  To conclude our proof, then, we now claim that it is possible to introduce an additional flow of $p_{\bullet B 2}$ of commodity 2, as well as $p_{\bullet B^c 1}$ of commodity 1 into the network, to put all edges from $z_{B2}$ to $t_2$ and from $z_{B^c1}$ to $t_1$ at full capacity.  Consider any maximal flow in the network; we wish to determine the maximal amount of commodity~2 that can be sent from $s_2$ through $x_{A^c2}$ and $y_{A^c B}^{(2)}$ to $z_{B2}$ and thus to $t_2$, in addition to the existing flow.  To this end, suppose that there exists $j \in B$ with the edge from $z_{j2}$ to $t_2$ at less than full capacity. Then, since the flow is maximal, it must be the case that for each $i \in A^c$, the flow of commodity 2 from $s_1$ to $x_{i2}$ is full (i.e.~equal to $p_{i \bullet 2})$, or the flow from $y_{ij}^{(1)}$ to $y_{ij}^{(2)}$ is full. However, if the flow from $s_1$ to $x_{i2}$ is equal to $p_{i \bullet 2}$, then the total flow from $\{x_{i1},x_{i2}\}$ must be equal to $p_{i \bullet 1} + p_{i \bullet 2} = p_{i \bullet \bullet} = \sum_{j'=1}^s p_{ij' \bullet}$. In this case, the edge from $y_{ij}^{(1)}$ to $y_{ij}^{(2)}$ must be full.  So, if the edge from $z_{j2}$ to $t_2$ is not full, then the edge from $y_{ij}^{(1)}$ to $y_{ij}^{(2)}$ is full for each $i \in A^c$ (and each $i \in A$ from the earlier flow).  It follows that, in this case, there is a flow of $\sum_{i=1}^r p_{ij \bullet} = p_{\bullet j \bullet} = p_{\bullet j 1} + p_{\bullet j 2}$ from $y_{[r]j}^{(1)}$ to $y_{[r]j}^{(2)}$.  But such a flow would put both edges $z_{j1}$ to $t_1$ and $z_{j2}$ to $t_2$ at full capacity, contradicting our original hypothesis.  Hence, at any maximal flow, all edges from $z_{B2}$ to $t_2$ are full, and similarly, all edges from $z_{B^c1}$ to $t_1$ are full.  Thus, we can indeed send the desired additional flow through the network, and we deduce that the total capacity of the network is at least the expression on the right-hand side of~\eqref{Eq:ABObjFun}.  We conclude from~\eqref{Eq:1minusdual} and~\eqref{Eq:ABObjFun} that
\[
R(P_\mathbb{S}) \leq 2 \max \Bigl\{ 0, \max_{A \subseteq [r], B \subseteq[s]} (-p_{AB \bullet} +p_{A \bullet 1} + p_{\bullet B 1} - p_{\bullet \bullet 1}) \Bigr\},
\]
and this completes the proof of the first part of the theorem.

We now turn to the second part of our result.  We first show
that $p_\mathbb{S}^* \in \mathcal{P}_\mathbb{S}^{0,*} + \mathcal{P}_\mathbb{S}^{\mathrm{cons},**}$ if and only if $p_\mathbb{S}^* \in \mathcal{P}_\mathbb{S}^{\mathrm{cons},*}$ and
\[
    \max\bigl\{ 1_\mathcal{X}^T p : p \in [0,\infty)^\mathcal{X}, \mathbb{A}p \leq p_\mathbb{S}^* \bigr\} \geq (p_{\bullet \bullet \bullet}^*-1)_+.
\]
If $p_{\bullet \bullet \bullet}^* \leq 1$, then $p_\mathbb{S}^* \in \mathcal{P}_\mathbb{S}^{\mathrm{cons},**}$ and there is nothing to prove, so we assume that $p_{\bullet \bullet \bullet}^* >1$. If $p_\mathbb{S}^* \in \mathcal{P}_\mathbb{S}^{0,*} + \mathcal{P}_\mathbb{S}^{\mathrm{cons},**}$, then we may write $p_\mathbb{S}^* = \mathbb{A}p + r_\mathbb{S}$ with $p \in [0,\infty)^\mathcal{X}$ and $r_\mathbb{S} \in \mathcal{P}_\mathbb{S}^{\mathrm{cons},**}$. Then
\[
     \max\bigl\{ 1_\mathcal{X}^T p' : p' \in [0,\infty)^\mathcal{X}, \mathbb{A}p' \leq p_\mathbb{S}^* \bigr\} \geq 1_\mathcal{X}^T p = \frac{1}{|\mathbb{S}|} \biggl( \sum_{S \in \mathbb{S}} 1_S \biggr)^T \mathbb{A}p = p_{\bullet \bullet \bullet}^* - r_{\bullet \bullet \bullet}\geq p_{\bullet \bullet \bullet}^* -1.
\]
On the other hand, suppose that $p_\mathbb{S}^* \in \mathcal{P}_\mathbb{S}^{\mathrm{cons},*}$ and that there exists $p\in [0,\infty)^\mathcal{X}$ with $\mathbb{A}p \leq p_\mathbb{S}^*$ and $1_\mathcal{X}^T p \geq p_{\bullet \bullet \bullet}^*-1$.  Then we certainly have $r_\mathbb{S}=p_\mathbb{S}^* - \mathbb{A}p \in \mathcal{P}_\mathbb{S}^{\mathrm{cons},*}$.  But since we also have $r_{\bullet \bullet \bullet} = p_{\bullet \bullet \bullet}^* - 1_\mathcal{X}^Tp \leq 1$, it follows that $r_\mathbb{S} \in \mathcal{P}_\mathbb{S}^{\mathrm{cons},**}$, and we have proved our claim. Now, the proof of the first part of the result shows that
\begin{align*}
    \max\bigl\{ 1_\mathcal{X}^T p : p \in [0,\infty)^\mathcal{X}, \mathbb{A}p \leq p_\mathbb{S}^* \bigr\} &= \min_{A \subseteq [r], B \subseteq [s]}( p_{A^c \bullet 1}^* + p_{A \bullet 2}^* + p_{\bullet B^c 1}^* + p_{\bullet B 2}^* + p_{AB \bullet}^* + p_{A^c B^c \bullet}^*) \\
    &= p_{\bullet \bullet \bullet}^* + 2 \min_{A \subseteq [r], B \subseteq [s]} (p_{AB \bullet}^* + p_{\bullet \bullet 1}^* - p_{A \bullet 1}^* - p_{\bullet B 1}^*).
\end{align*}
When $p_{\bullet \bullet \bullet}^* \geq 1$, we therefore have $p_\mathbb{S}^* \in \mathcal{P}_\mathbb{S}^{0,*} + \mathcal{P}_\mathbb{S}^{\mathrm{cons},**}$ if and only if $p_\mathbb{S}^* \in \mathcal{P}_\mathbb{S}^{\mathrm{cons},*}$ and 
\[
    1 + 2 \min_{A \subseteq [r], B \subseteq [s]} (p_{AB \bullet}^* + p_{\bullet \bullet 1}^* - p_{A \bullet 1}^* - p_{\bullet B 1}^*) \geq 0,
\]
as claimed.  On the other hand, when $p_{\bullet \bullet \bullet}^* <1$ and $p_\mathbb{S}^* \in \mathcal{P}_\mathbb{S}^{\mathrm{cons},*}$, we always have $p_\mathbb{S}^* \in \mathcal{P}_\mathbb{S}^{\mathrm{cons},**} \subseteq \mathcal{P}_\mathbb{S}^{0,*} + \mathcal{P}_\mathbb{S}^{\mathrm{cons},**}$, and moreover
\begin{align*}
    1 + 2 \min_{A \subseteq [r], B \subseteq [s]} (p_{AB \bullet}^* + p_{\bullet \bullet 1}^* - p_{A \bullet 1}^* - p_{\bullet B 1}^*) &> p_{\bullet \bullet \bullet}^* + 2 \min_{A \subseteq [r], B \subseteq [s]} (p_{AB \bullet}^* + p_{\bullet \bullet 1}^* - p_{A \bullet 1}^* - p_{\bullet B 1}^*) \\
    &= \max\bigl\{ 1_\mathcal{X}^T p : p \in [0,\infty)^\mathcal{X}, \mathbb{A}p \leq p_\mathbb{S}^* \bigr\} \geq 0.
\end{align*}
Combining both cases, we have now shown that
\[
    \mathcal{P}_\mathbb{S}^{0,*} + \mathcal{P}_\mathbb{S}^{\mathrm{cons},**} = \Bigl\{p_\mathbb{S}^* \in \mathcal{P}_\mathbb{S}^{\mathrm{cons},*} : 1 + 2 \min_{A \subseteq [r], B \subseteq [s]} (p_{AB \bullet}^* + p_{\bullet \bullet 1}^* - p_{A \bullet 1}^* - p_{\bullet B 1}^*) \geq 0 \Bigr\},
\]
as required.
\end{proof}

The proof of our lower bound in Theorem~\ref{Prop:rs2lowerbound} relies on the following lemma, which is an extension of both \citet[][Lemma~3]{wu2016minimax} and \citet[][Lemma~32]{jiao2018minimax}.
\begin{lemma}
\label{Lemma:PoissonLemma}
Let $V,V'$ be random variables supported on $[\lambda/2-M,\lambda/2+M]$ for some $M \leq \lambda/2$, and suppose that $\mathbb{E}(V^\ell) = \mathbb{E}\bigl((V')^\ell\bigr)$ for $\ell \in [L]$.  Let $Q$ denote the distribution on $\mathbb{Z}^2$ of $(W_1,W_2)^T$, where, conditional on $V=v$, we have that $W_1$ and $W_2$ are independent, with $W_1|V=v \sim \mathrm{Poi}(v)$ and $W_2|V=v \sim \mathrm{Poi}(\lambda-v)$.  Define $Q'$ in terms of $V'$ analogously.  Then
\[
    d_\mathrm{TV}(Q,Q') \leq \frac{2^{1/2}}{\pi^{1/4}} \Bigl( \frac{2eM^2}{\lambda(L+1)} \Bigr)^{(L+1)/2}
\]
whenever $L + 2 \geq 8M^2/\lambda$.
\end{lemma}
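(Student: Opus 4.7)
The plan is to use a chi-squared comparison against the product Poisson reference $R := \mathrm{Poi}(\lambda/2)^{\otimes 2}$, which is close to both $Q$ and $Q'$ because $V,V'$ are concentrated around $\lambda/2$.

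First I would observe that the joint pmf of $Q$ factorises as
\[
q(w_1,w_2) = \frac{e^{-\lambda}}{w_1!\,w_2!}\,\mathbb{E}[V^{w_1}(\lambda-V)^{w_2}],
\]
and similarly for $q'$. Since the integrand $V^{w_1}(\lambda-V)^{w_2}$ is a polynomial in $V$ of degree $w_1+w_2$, the moment-matching hypothesis immediately yields $q(w_1,w_2)=q'(w_1,w_2)$ whenever $w_1+w_2\leq L$, so only pairs with $w_1+w_2\geq L+1$ contribute to the total variation distance.

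Next I would apply Cauchy--Schwarz to get $\bigl(2d_{\mathrm{TV}}(Q,Q')\bigr)^2 \leq \chi^2 := \sum_{w_1,w_2}(q-q')^2/R$. Introducing the rescaled variables $T := 2(V-\lambda/2)/\lambda$ and $T' := 2(V'-\lambda/2)/\lambda$, both supported on $[-2M/\lambda,2M/\lambda]$, the ratio simplifies to $q(w_1,w_2)/R(w_1,w_2) = \mathbb{E}[(1+T)^{w_1}(1-T)^{w_2}]$. Taking two independent copies $T_1,T_2$ of $T$ and $T'_1,T'_2$ of $T'$, all mutually independent, the Poisson m.g.f.\ identity gives
\[
\sum_{w_1,w_2}R(w_1,w_2)\bigl[(1+T_1)(1+T_2)\bigr]^{w_1}\bigl[(1-T_1)(1-T_2)\bigr]^{w_2} = e^{-\lambda}e^{\lambda(1+T_1T_2)} = e^{\lambda T_1T_2},
\]
where the key cancellation $(1+T_1)(1+T_2)+(1-T_1)(1-T_2) = 2(1+T_1T_2)$ eliminates the linear terms. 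Expanding the exponentials term by term and using independence collapses the chi-squared to the clean form
\[
\chi^2 = \mathbb{E}[e^{\lambda T_1T_2}] - 2\mathbb{E}[e^{\lambda T_1T'_2}] + \mathbb{E}[e^{\lambda T'_1T'_2}] = \sum_{k\geq L+1}\frac{\lambda^k}{k!}\bigl(\mathbb{E}[T^k]-\mathbb{E}[(T')^k]\bigr)^2,
\]
the truncation to $k\geq L+1$ being a direct consequence of moment matching for $T,T'$.

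Finally I would estimate the tail. Using $|T|,|T'|\leq 2M/\lambda$, each squared moment difference is at most $4(2M/\lambda)^{2k}$, giving $\chi^2 \leq 4\sum_{k\geq L+1}(4M^2/\lambda)^k/k!$. The hypothesis $L+2\geq 8M^2/\lambda$ makes successive terms decay by a factor of at most $1/2$, so the tail is bounded by twice its leading term, and Stirling's bound $(L+1)! \geq \sqrt{2\pi(L+1)}\bigl((L+1)/e\bigr)^{L+1}$ combined with $d_{\mathrm{TV}}\leq \chi/2$ yields a bound of the announced shape. To match the precise constant $2eM^2$ in the base (rather than $4eM^2$), the crude bound on each squared moment difference should be replaced by a Chebyshev-type best-polynomial-approximation bound: for example, the error in approximating $x^{L+1}$ by degree-$L$ polynomials on $[-1,1]$ equals $2^{-L}$, and this kind of sharpening exactly saves the required factor of $2^{(L+1)/2}$ in the exponent.

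The main obstacle is the explicit evaluation of $\chi^2$ via the Poisson m.g.f.\ identity: introducing independent copies $(T_1,T_2,T'_1,T'_2)$ and exploiting the cancellation that turns the sum into $e^{\lambda T_1T_2}$ is what produces the $\lambda^k/k!$ weights, whose factorial denominator is ultimately what beats the growth of $\lambda^k$ and produces the crucial $\lambda^{-(L+1)/2}$ decay in the final bound; a naive pointwise bound on $|q-q'|$ would only deliver decay of order $(2M)^{L+1}/(L+1)!$ with no $\lambda$ gain.
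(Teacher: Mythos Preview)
Your $\chi^2$ approach is correct and follows a genuinely different route from the paper. The paper works directly in $\ell_1$: after writing $V=\lambda/2+MU$ with $|U|\le 1$, it expands $(1+2MU/\lambda)^i(1-2MU/\lambda)^j$ binomially, uses moment matching to truncate to powers $k\ge L+1$ with the crude bound $|\mathbb{E}[U^k-(U')^k]|\le 2$, and then applies Cauchy--Schwarz \emph{termwise in $k$}. This produces $\mathbb{E}\bigl|\sum_{m}(-1)^m\binom{k}{m}(Y)_m(Z)_{k-m}\bigr|$ with $Y,Z\sim\mathrm{Poi}(\lambda/2)$ independent, and the second moment of this alternating sum is then evaluated via explicit falling-factorial identities to give $k!\,\lambda^k$. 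Your generating-function identity $\sum_{w_1,w_2}R(w_1,w_2)\,[(1+T_1)(1+T_2)]^{w_1}[(1-T_1)(1-T_2)]^{w_2}=e^{\lambda T_1T_2}$ is a cleaner packaging of exactly that second-moment computation, and in fact yields a bound at least as tight: writing $c_k=(4M^2/\lambda)^k/k!$, the paper's termwise Cauchy--Schwarz gives $\sum_{k\ge L+1}c_k^{1/2}$ whereas your global Cauchy--Schwarz gives $\bigl(\sum_{k\ge L+1}c_k\bigr)^{1/2}$, which is never larger by subadditivity of the square root.

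The closing paragraph on a Chebyshev ``fix'' is where you go wrong. First, it is unnecessary: the paper also uses only the crude bound $|\mathbb{E}[U^k-(U')^k]|\le 2$, with no polynomial-approximation refinement, so on this point your method already matches the paper's argument and there is nothing for you to repair. Second, the fix as you sketch it does not work: the best degree-$L$ uniform approximation error of $x^k$ on $[-1,1]$ equals $2^{-L}$ only for $k=L+1$, so you cannot extract a uniform $2^{-k}$-type saving across the whole tail $k\ge L+1$ this way.
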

\begin{proof}[Proof of Lemma~\ref{Lemma:PoissonLemma}]
Let $U := (V-\lambda/2)/M$ and $U' := (V'-\lambda/2)/M$, and for $m \in \mathbb{N}$ and $x \in \mathbb{R}$, let $(x)_m := x(x-1)\ldots(x-m+1)$ for the falling factorial (with $(x)_0 := 1$).  Letting $Y,Z \sim \mathrm{Poi}(\lambda/2)$ be  independent, we have
\begin{align}
\label{Eq:FirstStep}
    &d_\mathrm{TV}(Q,Q') = \frac{1}{2} e^{-\lambda} \sum_{i,j=0}^\infty \frac{1}{i! j!} \Bigl| \mathbb{E} \bigl\{V^i (\lambda - V)^j - (V')^i(\lambda-V')^j \bigr\} \Bigr| \nonumber \\
    & = \frac{1}{2} e^{-\lambda} \sum_{i,j=0}^\infty \frac{(\lambda/2)^{i+j}}{i! j!} \Bigl| \mathbb{E} \bigl\{(1+2MU/\lambda)^i (1 - 2MU/\lambda)^j - (1+2MU'/\lambda)^i(1-2MU'/\lambda)^j \bigr\} \Bigr| \nonumber \\
     & = \frac{1}{2} e^{-\lambda} \sum_{i,j=0}^\infty \frac{(\lambda/2)^{i+j}}{i! j!} \biggl| \mathbb{E} \sum_{k=0}^{i+j} \Bigl( \frac{2M}{\lambda} \Bigr)^k \{U^k - (U')^k \} \sum_{m=0}^k \binom{i}{m} \binom{j}{k-m} (-1)^{k-m} \biggr| \nonumber \\
     & \leq e^{-\lambda} \sum_{i,j=0}^\infty \frac{(\lambda/2)^{i+j}}{i! j!} \sum_{k=0}^{i+j} \Bigl( \frac{2M}{\lambda} \Bigr)^k \mathbbm{1}_{\{k \geq L+1\}} \biggl| \sum_{m=0}^k \binom{i}{m} \binom{j}{k-m} (-1)^{k-m} \biggr| \nonumber \\
     & =  \sum_{k=L+1}^\infty \frac{1}{k!} \Bigl( \frac{2M}{\lambda} \Bigr)^k \mathbb{E} \biggl| \sum_{m=0}^k (-1)^m \binom{k}{m} (Y)_m (Z)_{k-m} \biggr| \nonumber \\
     & \leq \sum_{k=L+1}^\infty \frac{1}{k!} \Bigl( \frac{2M}{\lambda} \Bigr)^k \mathbb{E}^{1/2} \biggl[ \biggl\{ \sum_{m=0}^k (-1)^m \binom{k}{m} (Y)_m (Z)_{k-m} \biggr\}^2 \biggr].
\end{align}
We now bound this second moment using the facts that $(x)_m(x)_n = \sum_{\ell=0}^m \binom{m}{\ell} \binom{n}{\ell} \ell! (x)_{m+n-\ell}$ and $\mathbb{E}(Y)_m = (\lambda/2)^m$ for all $m,n \in \mathbb{N}_0$ to write
\begin{align}
\label{Eq:SecondStep}
    \mathbb{E} \biggl[ \biggl\{ \sum_{m=0}^k (-1)^m &\binom{k}{m} (Y)_m (Z)_{k-m} \biggr\}^2 \biggr] \nonumber \\
    &= \sum_{m,n=0}^k (-1)^{m+n} \binom{k}{m} \binom{k}{n} \mathbb{E} \{ (Y)_m (Y)_n \} \mathbb{E}\{ (Z)_{k-m} (Z)_{k-n} \} \nonumber \\
    &= \sum_{m,n=0}^k (-1)^{m+n} \binom{k}{m} \binom{k}{n} \sum_{\ell,r=0}^\infty \binom{m}{\ell} \binom{n}{\ell } \binom{k-m}{r} \binom{k-n}{r} \ell! r! (\lambda/2)^{2k - \ell - r} \nonumber \\
    & = \sum_{\ell,r=0}^\infty \ell! r! (\lambda/2)^{2k-\ell -r } \biggl\{ \sum_{m=0}^k (-1)^m \binom{k}{m} \binom{m}{\ell}  \binom{k-m}{r} \biggr\}^2.
\end{align}
Now, terms with $\ell+r > k$ are zero, because either $\ell>m$ or $r > k-m$. We can think of $\binom{m}{\ell} \binom{k-m}{r}$ as a polynomial of degree $\ell+r$ in $m$, and use the fact that $\sum_{m=0}^k (-1)^m \binom{k}{m} m^s = 0$ for non-negative integers $s < k$ to conclude that the only non-zero terms are those with $\ell+r=k$.  We now use the fact that $\sum_{m=0}^k (-1)^m \binom{k}{m} m^k = (-1)^k k!$ to see that
\begin{align}
\label{Eq:ThirdStep}
    \sum_{m=0}^k (-1)^m \binom{k}{m} \binom{m}{\ell}  \binom{k-m}{r} &= \frac{\mathbbm{1}_{\{\ell+r=k\}}}{\ell! r!} \sum_{m=0}^k (-1)^m \binom{k}{m} (m)_\ell (k-m)_r \nonumber \\
    &= \frac{\mathbbm{1}_{\{\ell+r=k\}}}{\ell! r!} (-1)^{r+k} k!.
\end{align}
From~\eqref{Eq:FirstStep},~\eqref{Eq:SecondStep} and~\eqref{Eq:ThirdStep} together with Stirling's inequality \citep[e.g.][p.~847]{dumbgen2021bounding}, we deduce that when $L +2 \geq 8M^2/\lambda$, we have
\begin{align*}
    d_\mathrm{TV}(Q,Q')     & \leq \sum_{k=L+1}^\infty \frac{1}{k!} \Bigl( \frac{2M}{\lambda} \Bigr)^k \biggl\{ \sum_{\ell,r=0}^\infty \ell! r! (\lambda/2)^{2k-\ell-r} \frac{\mathbbm{1}_{\{\ell+r=k\}}}{(\ell!)^2 (r!)^2} (k!)^2 \biggr\}^{1/2} \\
    & = \sum_{k=L+1}^\infty \frac{1}{k!} \Bigl( \frac{2M}{\lambda} \Bigr)^k \biggl\{ k! (\lambda/2)^k \sum_{\ell=0}^k \binom{k}{\ell} \biggr\}^{1/2} = \sum_{k=L+1}^\infty \frac{1}{(k!)^{1/2}} \Bigl( \frac{2M^2}{\lambda} \Bigr)^{k/2} \\
    & \leq \frac{2}{\{(L+1)!\}^{1/2}} \Bigl( \frac{2M^2}{\lambda} \Bigr)^{(L+1)/2} \leq \frac{2}{\{2 \pi (L+1)\}^{1/4}} \Bigl( \frac{2eM^2}{\lambda(L+1)} \Bigr)^{(L+1)/2} \\
    & \leq \frac{2^{1/2}}{\pi^{1/4}} \Bigl( \frac{2eM^2}{\lambda(L+1)} \Bigr)^{(L+1)/2},
\end{align*}
as required.
\end{proof}

\begin{proof}[Proof of Theorem~\ref{Prop:rs2lowerbound}]
Assume without loss of generality that $n_{\{1,2\}} \leq n_{\{1,3\}}$.  We will start by showing that we may work in a Poisson sampling model without changing the separation rates.  Extending our previous setting, let $(X_{S,i})_{S \in \mathbb{S},i \in \mathbb{N}}$ denote independent random variables, with $X_{S,i} \sim P_S$, and let $N_{\mathbb{S}}:=(N_S : S \in \mathbb{S})$ be an independent sequence of Poisson random variables, independent of $(X_{S,i})_{S \in \mathbb{S},i \in \mathbb{N}}$, with $\mathbb{E} N_S = n_S$ for all $S \in \mathbb{S}$.  Let $\Psi'_\mathbb{S}$ denote the set of sequences of tests of the form $(\psi'_{n_{\mathbb{S}}'} \in \Psi_{n_{\mathbb{S}}'}:n_\mathbb{S}' \in \mathbb{N}_0^{\mathbb{S}})$, and write
\[
\mathcal{R}^\mathrm{Poi}(n_\mathbb{S},\rho) := \inf_{\psi'_{\mathbb{S}} \in \Psi'_{\mathbb{S}}}\biggl\{ \sup_{P_\mathbb{S} \in \mathcal{P}_\mathbb{S}^0} \mathbb{E}_{P_\mathbb{S}}(\psi'_{N_{\mathbb{S}}}) + \sup_{P_\mathbb{S} \in \mathcal{P}_\mathbb{S}(\rho) }\mathbb{E}_{P_\mathbb{S}}(1-\psi'_{N_{\mathbb{S}}})\biggr\}.
\]
Here, the expectations are taken over the randomness both in the data and in the sample sizes.  Since $\mathcal{R}( n_{\mathbb{S}}', \rho) \leq \mathcal{R}(n_{\mathbb{S}}'',\delta)$ whenever $n_S' \geq n_S''$ for all $S \in \mathbb{S}$, we have that 
\begin{align*}
\mathcal{R}^\mathrm{Poi}(n_\mathbb{S},\rho) &= \inf_{\psi'_{\mathbb{S}} \in \Psi'_{\mathbb{S}}}\biggl\{\sup_{P_\mathbb{S} \in \mathcal{P}_\mathbb{S}^0} \sum_{n_\mathbb{S}' \in \mathbb{N}_0^\mathbb{S}} \mathbb{P}(N_\mathbb{S} \!=\! n_\mathbb{S}') \mathbb{E}_{P_\mathbb{S}} (\psi'_{n_\mathbb{S}'}) +\sup_{P_\mathbb{S} \in \mathcal{P}_\mathbb{S}(\rho) } \sum_{n_\mathbb{S}' \in \mathbb{N}_0^\mathbb{S}} \mathbb{P}(N_\mathbb{S} \!=\! n_\mathbb{S}') \mathbb{E}_{P_\mathbb{S}} (1-\psi'_{n_\mathbb{S}'}) \biggr\} \\
 &\leq \inf_{\psi'_{\mathbb{S}} \in \Psi'_{\mathbb{S}}} \sum_{n_\mathbb{S}' \in \mathbb{N}_0^\mathbb{S}} \mathbb{P}(N_\mathbb{S} = n_\mathbb{S}') \biggl\{  \sup_{P_\mathbb{S} \in \mathcal{P}_\mathbb{S}^0} \mathbb{E}_{P_\mathbb{S}} (\psi'_{n_\mathbb{S}'}) + \sup_{P_\mathbb{S} \in \mathcal{P}_\mathbb{S}(\rho) }\mathbb{E}_{P_\mathbb{S}} (1-\psi'_{n_\mathbb{S}'}) \biggr\} \\
 &= \sum_{n_\mathbb{S}' \in \mathbb{N}_0^\mathbb{S}} \mathbb{P}(N_\mathbb{S} = n_\mathbb{S}')  \mathcal{R}(n_\mathbb{S}',\rho) \\
 &\leq \mathcal{R}(\lceil n_\mathbb{S}/2\rceil,\rho)\prod_{S \in \mathbb{S}} \mathbb{P}(N_S \geq \lceil n_S/2\rceil) + \sum_{S \in \mathbb{S}} \mathbb{P}(N_S < \lceil n_S/2\rceil) \\
 &\leq \mathcal{R}(\lceil n_\mathbb{S}/2\rceil,\rho) + \sum_{S \in \mathbb{S}} e^{-n_S/12}.
 \end{align*}
Here, in the final inequality, we have used the fact that when $W \sim \mathrm{Poi}(\lambda)$, we have
\[
\mathbb{P}(W - \lambda \leq -x) \leq e^{-\frac{x^2}{2(\lambda + x)}}
\]
for all $x \geq 0$.

We will construct priors for consistent $P_\mathbb{S}$ over the null and alternative hypotheses that  satisfy $p_{\bullet 1 \bullet } = p_{\bullet \bullet 1} = 1/2$, $p_{\bullet 21} \geq 1/4$, and $p_{i\bullet \bullet} =1/r$ and $p_{i \bullet 1} =1/(2r)$ for each $i \in [r]$.  By~\eqref{Eq:RS3s2}, for such $P_\mathbb{S}$ we have
\begin{align*}
    R(P_\mathbb{S}) &= 2 \max_{j \in [2]} \biggl\{ p_{\bullet j 1} - \sum_{i=1}^r \min \Bigl( p_{ij\bullet}, \frac{1}{2r} \Bigr) \biggr\}_+ \\
    &=  \max_{j \in [2]} \biggl\{ 2p_{\bullet j 1} - \sum_{i=1}^r \biggl( p_{ij\bullet } + \frac{1}{2r} - \biggl|p_{i j \bullet }- \frac{1}{2r} \biggr| \biggr) \biggr\}_+ \\
    & = \biggl\{ \sum_{i=1}^r \biggl| p_{i1\bullet } - \frac{1}{2r} \biggr| -\frac{1}{2} + \max(2p_{\bullet 1 1} - p_{\bullet 1 \bullet }, 2p_{\bullet 2 1} - p_{\bullet 2 \bullet } ) \biggr\}_+ \\
    & = \biggl\{ \sum_{i=1}^r \biggl| p_{i1\bullet } - \frac{1}{2r} \biggr| -\frac{1}{2} + \max( 1/2 -2p_{\bullet 2 1} , 2p_{\bullet 2 1}  - 1/2 ) \biggr\}_+ \\
    & = \biggl( \sum_{i=1}^r \biggl| p_{i1\bullet } - \frac{1}{2r} \biggr| + 2p_{\bullet 21} -1 \biggr)_+.
\end{align*}
We now construct our priors using results from \citet{jiao2018minimax}; see also \citet{cai2011testing} and \citet{wu2016minimax}. Set $L:= \lceil 2e \log r \rceil$ and let $\nu_0,\nu_1$ be probability distributions on $[-1,1]$ satisfying:
\begin{itemize}
    \item $\nu_0$ and $\nu_1$ are symmetric about $0$;
    \item $\int_{-1}^1 t^\ell \, d\nu_0(t) = \int_{-1}^1 t^\ell \, d\nu_1(t)$ for $\ell=0,1,\ldots,L$;
    \item $\int_{-1}^1 |t| \, d\nu_1(t) - \int_{-1}^1 |t| \, d\nu_0(t) = 2 E_L$,
\end{itemize}
where $E_L \equiv E_L\bigl[ |\cdot|; [-1,1] \bigr]$ is the error in uniform norm of the best degree-$L$ polynomial approximation to the function $x \mapsto |x|$ on $[-1,1]$. The existence of such distributions $\nu_0$ and~$\nu_1$ follows from \citet[][Lemma~29]{jiao2018minimax}. We recall that $E_L = \beta_* L^{-1} \{1+o(1)\}$ as $L \rightarrow \infty$, where $\beta_* \approx 0.2802$ is the Bernstein constant \citep{bernstein1914meilleure}. Define $g:[-1,1] \rightarrow \mathbb{R}$ by
\[
    g(x) := \frac{1}{r} + \delta x, \quad \text{where } \delta:=\frac{1}{r} \wedge \biggl( \frac{\log r}{n_{\{1,2\}} r} \biggr)^{1/2}.
\]
Further, writing $a := 1/r - \delta \geq 0$ and $b := 1/r + \delta \leq 2/r$, define distributions $\mu_0$ and $\mu_1$ on $[a,b]$ by $\mu_j := \nu_j \circ g^{-1}$ for $j=0,1$.  These distributions satisfy
\begin{itemize}
    \item $\int_a^b t \, d\mu_0(t) = \int_a^b t \, d\mu_1(t) = 1/r$;
    \item $\int_a^b t^\ell \, d\mu_0(t) = \int_a^b t^\ell \,  d\mu_1(t)$ for $\ell=2,3,\ldots,L$;
    \item $\int_a^b |t-1/r| \, d\mu_1(t) - \int_a^b |t-1/r | \, d\mu_0(t) = 2 \delta E_L$. 
\end{itemize}
Since $\rho^*(n_\mathbb{S})$ is increasing in $r$, we may assume without loss of generality that $r$ is even.  We will write $\sigma_0$ and $\sigma_1$ for our priors under the null and alternative hypotheses respectively.  For $\sigma_j$ with $j \in \{0,1\}$ and for odd $i \in [r]$, generate $2p_{i1\bullet }$ independently from $\mu_j$. For even $i \in [r]$, set $p_{i 1 \bullet }:=1/r-p_{i-1, 1 \bullet }$ so that $p_{\bullet 1 \bullet} = 1/2$ with probability one. Given $(p_{i1\bullet })_{i=1}^r$, take $p_{i2 \bullet } := 1/r - p_{i1 \bullet }$ and $p_{i \bullet 1 } = p_{i \bullet 2}=1/(2r)$, so that $p_{i \bullet \bullet} = 1/r$ and $p_{\bullet \bullet 1} = 1/2$.  Write
\begin{align*}
    \chi := \mathbb{E}_{\sigma_1} \sum_{i=1}^r \Bigl| p_{i 1 \bullet} - \frac{1}{2r}\Bigr| - \mathbb{E}_{\sigma_0} \sum_{i=1}^r \Bigl|p_{i1 \bullet} - \frac{1}{2r}\Bigr| &=  r \delta E_L
\end{align*}
and set
\[
    \zeta := \frac{1}{2} \mathbb{E}_{\sigma_1} \sum_{i=1}^r \Bigl| p_{i 1 \bullet} - \frac{1}{2r}\Bigr| + \frac{1}{2} \mathbb{E}_{\sigma_0} \sum_{i=1}^r \Bigl|p_{i1 \bullet} - \frac{1}{2r}\Bigr| \leq 1/2.
\]
Our prior distributions are fully specified upon choosing $p_{\bullet 21} := 1/2 - (\zeta-\chi/4)/2 \geq 1/4$.  For $j \in \{0,1\}$, let
\[
\Omega_{0,j} := \biggl\{ (-1)^j \biggl( \sum_{i=1}^r \Bigl|p_{i1 \bullet} -\frac{1}{2r}\Bigr| - \mathbb{E}_{\sigma_j} \sum_{i=1}^r \Bigl|p_{i 1 \bullet} - \frac{1}{2r} \Bigr| \biggr) \leq \frac{\chi}{4} \biggr\}.
\]
Then, noting that the even terms in the sum are equal to the odd terms, by Hoeffding's inequality,
\begin{align*}
    \mathbb{P}_{\sigma_j}(\Omega_{0,j}^c) \leq  \exp \biggl( - \frac{\chi^2 }{16 r \delta^2} \biggr) = e^{-rE_L^2/16}.
\end{align*}
Moreover, on $\Omega_{0,0}$,
\begin{align*}
    \sum_{i=1}^r \biggl| p_{i1\bullet } &- \frac{1}{2r} \biggr| + 2p_{\bullet 21} - 1 \\
    &\leq \mathbb{E}_{\sigma_0} \biggl( \sum_{i=1}^r \biggl| p_{i1\bullet } - \frac{1}{2r} \biggr| + 2p_{\bullet 21} - 1 \biggr) + \chi/4 = \zeta - \chi/2 -(\zeta-\chi/4) + \chi/4 = 0,
\end{align*}
so that $\sigma_0\bigl((\mathcal{P}_\mathbb{S}^0)^c\bigr) = \mathbb{P}_{\sigma_0}\bigl\{R(P_{\mathbb{S}}) > 0\bigr\} \leq e^{-rE_L^2/16}$.  On the other hand,  on $\Omega_{0,1}$, 
\begin{align*}
    \sum_{i=1}^r \biggl| p_{i1\bullet } &- \frac{1}{2r} \biggr| + 2p_{\bullet 21} - 1 \\
     &\geq \mathbb{E}_{\mu_1} \biggl( \sum_{i=1}^r \biggl| p_{i1\bullet } - \frac{1}{2r} \biggr| + 2p_{\bullet 21} - 1 \biggr) - \chi/4 = \zeta + \chi/2 - (\zeta-\chi/4) - \chi/4 = \chi/2, 
\end{align*}
so that $\sigma_1\bigl(\mathcal{P}_\mathbb{S}(\chi/2)^c\bigr) = \mathbb{P}_{\sigma_1}\bigl\{R(P_{\mathbb{S}}) < \chi/2\bigr\} \leq e^{-rE_L^2/16}$. 


We finally bound the total variation distance between the marginal distributions of the data, using similar arguments to those in  \cite{wu2016minimax}.  We have
\begin{align*}
    \mathcal{R}^\mathrm{Poi}(n_\mathbb{S}, \chi/2) &\geq \inf_{\psi_\mathbb{S}' \in \Psi_\mathbb{S}'} \Bigl[ \mathbb{E}_{\sigma_0} \bigl\{ \mathbbm{1}_{\{P_\mathbb{S} \in \mathcal{P}_\mathbb{S}^0\}} \mathbb{E}_{P_\mathbb{S}}(\psi'_{N_\mathbb{S}}) \bigr\} + \mathbb{E}_{\sigma_1} \bigl\{ \mathbbm{1}_{\{P_\mathbb{S} \in \mathcal{P}_\mathbb{S}(\chi/2)\}} \mathbb{E}_{P_\mathbb{S}}(1-\psi'_{N_\mathbb{S}}) \bigr\} \Bigr] \\
    & \geq \inf_{\psi_\mathbb{S}' \in \Psi_\mathbb{S}'} \Bigl[ \mathbb{E}_{\sigma_0} \bigl\{ \mathbb{E}_{P_\mathbb{S}}(\psi'_{N_\mathbb{S}}) \bigr\} + \mathbb{E}_{\sigma_1} \bigl\{  \mathbb{E}_{P_\mathbb{S}}(1-\psi'_{N_\mathbb{S}}) \bigr\} \Bigr] - \sigma_0\bigl((\mathcal{P}_\mathbb{S}^0)^c\bigr) -\sigma_1\bigl(\mathcal{P}_\mathbb{S}(\chi/2)^c\bigr) \\
    & \geq 1- d_\mathrm{TV}\bigl( \mathbb{E}_{\sigma_0} P_\mathbb{S}^{ n_\mathbb{S}}, \mathbb{E}_{\sigma_1} P_\mathbb{S}^{n_\mathbb{S}} \bigr) - 2e^{-rE_L^2/16},
\end{align*}
where, for $j=0,1$, we write $\mathbb{E}_{\sigma_j} P_\mathbb{S}^{n_\mathbb{S}}$ for the marginal distribution of $(X_{S,i})_{S \in \mathbb{S},i \in \mathbb{N}}$ in our Poisson model when the prior distribution for $P_\mathbb{S}$ is $\sigma_j$.  The distributions $P_{\{2,3\}}$ and $P_{\{1,3\}}$ are deterministic and do not change between the two priors, so 
\[
d_\mathrm{TV}\bigl( \mathbb{E}_{\sigma_0} P_\mathbb{S}^{ n_\mathbb{S}}, \mathbb{E}_{\sigma_1} P_\mathbb{S}^{n_\mathbb{S}} \bigr) = d_\mathrm{TV}\bigl( \mathbb{E}_{\sigma_0} P_{\{1,2\}}^{ n_{\{1,2\}}}, \mathbb{E}_{\sigma_1} P_{\{1,2\}}^{n_{\{1,2\}}} \bigr),
\]
where, for $j=0,1$, $\mathbb{E}_{\sigma_j}P_{\{1,2\}}^{n_{\{1,2\}}}$ denotes the marginal distribution of $(X_{\{1,2\},i})_{i \in \mathbb{N}}$ in our Poisson model when the prior distribution for $(p_{i\ell\bullet})_{i \in [r],\ell \in [2]}$ is taken from the construction of $\sigma_j$.  Under our Poisson sampling scheme,  since $(p_{i 1 \bullet })_{i \, \mathrm{ odd}}$ is an independent sequence, it suffices to bound the total variation distance between the distributions of random vectors $(Y_1,Y_2,Y_3,Y_4)$ and $(Z_1,Z_2,Z_3,Z_4)$, where $V \sim n_{\{1,2\}} \mu_0/2$, $V' \sim n_{\{1,2\}} \mu_1/2$, and with $\lambda:=n_{\{1,2\}}/r$, we have
\[
 (Y_1,Y_2,Y_3,Y_4) | V=v \sim  \mathrm{Poi}(v)\otimes \mathrm{Poi}(\lambda-v) \otimes \mathrm{Poi}(\lambda-v) \otimes \mathrm{Poi}(v)
\]
for all $v$, and $(Z_1,Z_2,Z_3,Z_4) | V'=v \overset{d}{=} (Y_1,Y_2,Y_3,Y_4) | V=v$ for all $v$. We now have that
\[
    d_\mathrm{TV}\bigl( \mathbb{E}_{\sigma_0} P_{\{1,2\}}^{ n_{\{1,2\}}}, \mathbb{E}_{\sigma_1} P_{\{1,2\}}^{n_{\{1,2\}}} \bigr) \leq \frac{r}{2}d_\mathrm{TV}\bigl(\mathcal{L}(Y_1,Y_2,Y_3,Y_4),\mathcal{L}(Z_1,Z_2,Z_3,Z_4)\bigr).
\]
Recalling that $V$ and $V'$ have identical $\ell$th moments for $\ell \in [L]$, we have by Lemma~\ref{Lemma:PoissonLemma} above that
\begin{align*}
    d_\mathrm{TV}\bigl(\mathcal{L}(Y_1,Y_2,Y_3,Y_4)&,\mathcal{L}(Z_1,Z_2,Z_3,Z_4)\bigr) \\
    &= \frac{1}{2} \sum_{w,x,y,z=0}^\infty \frac{e^{-2\lambda}}{w!x!y!z!} \bigl| \mathbb{E} \bigl\{ V^{w+z} (\lambda-V)^{x+y} \bigr\} - \mathbb{E} \bigl\{ (V')^{w+z} (\lambda-V')^{x+y} \bigr\} \bigr| \\
    & = \frac{1}{2} \sum_{i,j=0}^\infty e^{-2\lambda} \frac{1}{i! j!} \bigl| \mathbb{E} \bigl\{ (2V)^{i} (2\lambda-2V)^{j} \bigr\} - \mathbb{E} \bigl\{ (2V')^{i} (2\lambda-2V')^{j} \bigr\} \bigr| \\
    &\leq \frac{2^{1/2}}{\pi^{1/4}} \biggl( \frac{e \log r}{L+1} \biggr)^{(L+1)/2}
\end{align*}  
since $L+2 \geq 4 \log r$.  We deduce that with $\rho = \chi/2 = r \delta E_L /2$,
\begin{align*}
    \mathcal{R}(\lceil n_\mathbb{S}/2\rceil,\rho) &\geq \mathcal{R}^\mathrm{Poi}(n_\mathbb{S},\rho) - \sum_{S \in \mathbb{S}} e^{-n_S/12} \\
    &\geq 1 - \frac{r}{2}\cdot \frac{2^{1/2}}{\pi^{1/4}} \biggl( \frac{e \log r}{L+1} \biggr)^{(L+1)/2} - 2e^{-rE_L^2/16} - \sum_{S \in \mathbb{S}} e^{-n_S/12} \\
    &\geq 1 - \frac{r^{1-e\log 2}}{2}\cdot \frac{2^{1/2}}{\pi^{1/4}} - 2e^{-rE_L^2/16} - \sum_{S \in \mathbb{S}} e^{-n_S/12}.
\end{align*}
It follows that there exists a universal constant $r_0>0$ such that when $\min\bigl(r,\min_{S \in \mathbb{S}} n_S\bigr) \geq r_0$ we have $\mathcal{R}(\lceil n_\mathbb{S}/2\rceil,\rho) \geq 1/2$, so
\[
\rho^*(\lceil n_{\mathbb{S}}/2 \rceil) \geq c' \biggl\{ \frac{1}{\log r} \wedge  \Bigl(\frac{r}{(n_{\{1,2\}}\wedge n_{\{1,3\}})\log r}\Bigr)^{1/2} \biggr\}
\]
for some universal constant $c' > 0$.  By reducing $c' > 0$ if necessary, we may therefore conclude that the same lower bound holds for $\rho^*(n_{\mathbb{S}})$.

We now prove that we always have a parametric lower bound, so that the result still holds when $2 \leq r < r_0$. Since $\rho^*$ is increasing in $r$ we assume without loss of generality that $r=2$ and that $n_{\{1,2\}} = \min_{S \in \mathbb{S}} n_S$. Here we use a two-point argument.  For any  $P_\mathbb{S} \in \mathcal{P}_{\mathbb{S}}^{\mathrm{cons}}$ with $p_{1 \bullet \bullet } = p_{\bullet 1 \bullet } = p_{\bullet \bullet 1} =1/2$, we have from~\eqref{Eq:RS3s2} that
\begin{align*}
    R(P_\mathbb{S})= 2 \max \biggl\{0, &\frac{1}{2} -  p_{11\bullet } - p_{\bullet 11} - p_{1 \bullet 1 }, p_{11\bullet } - p_{\bullet 11} + p_{1 \bullet 1 } - \frac{1}{2}, \\
    & p_{11\bullet } + p_{\bullet 11} - p_{1 \bullet 1 } - 1/2, -p_{11\bullet } + p_{\bullet 11} + p_{1 \bullet 1 } - \frac{1}{2} \biggr\}.
\end{align*}
In fact, when $p_{11\bullet } + p_{\bullet 11} + p_{1 \bullet 1 } \leq 1/2$ we have
\[
    R(P_\mathbb{S}) = 1- 2 (p_{11\bullet } + p_{\bullet 11} + p_{1 \bullet 1 }).
\]
Take $p_{\bullet 11} = p_{1 \bullet 1}=1/8$ so that $R(P_\mathbb{S}) = 1/2 - 2p_{11 \bullet }$. We can therefore take $P_\mathbb{S}^{(0)} \in \mathcal{P}_\mathbb{S}^0$ to have $p_{11 \bullet } = 1/4$ and $P_\mathbb{S}^{(1)} \in \mathcal{P}_\mathbb{S}\bigl((32n_{\{1,2\}})^{-1/2}\bigr)$ to have $p_{11 \bullet } = 1/4 - (32n_{\{1,2\}})^{-1/2}$. We now use Pinsker's inequality to calculate that
\begin{align*}
    d_\mathrm{TV}^2 \bigl( (P_\mathbb{S}^{(0)})^{n_\mathbb{S}}, (P_\mathbb{S}^{(1)})^{n_\mathbb{S}} \bigr) &= d_\mathrm{TV}^2 \bigl( (P_{n_{\{1,2\}}}^{(0)})^{n_{\{1,2\}}}, (P_{n_{\{1,2\}}}^{(1)})^{n_{\{1,2\}}} \bigr) \leq \frac{n_{\{1,2\}}}{2}\mathrm{KL}\bigl(P_{n_{\{1,2\}}}^{(0)},P_{n_{\{1,2\}}}^{(1)}\bigr) \\
    &= \frac{n_{\{1,2\}}}{4} \biggl\{ \log \biggl( \frac{1/4}{1/4 - (32n_{\{1,2\}})^{-1/2}} \biggr) + \log \biggl( \frac{1/4}{1/4 + (32n_{\{1,2\}})^{-1/2}} \biggr) \biggr\} \\
    &= \frac{n_{\{1,2\}}}{4} \log \biggl( \frac{1}{1- 1/(2n_{\{1,2\}})} \biggr)
    \leq \frac{1}{4}.
\end{align*}
and it follows that $\rho^*(n_\mathbb{S}) \geq (32\min_{S \in \mathbb{S}} n_S)^{-1/2}$. By considering the different possible orderings of $r$, $\min_{S \in \mathbb{S}} n_S$ and $r_0$, we see that the claimed lower bound holds.
\end{proof}

\begin{proof}[Proof of Proposition~\ref{Prop:SinglePattern}]
Suppose that $P_{\mathbb{S}}^{-J} \in \mathcal{P}_{\mathbb{S}^{-J}}^{\mathrm{cons}}$ and let $S_1,S_2 \in \mathbb{S}$ have $S_1 \cap S_2 \neq \emptyset$.  If neither or both of $S_1$ and $S_2$ are equal to $S_0$, then we have immediately that $P_{S_1}^{S_1 \cap S_2} = P_{S_2}^{S_1 \cap S_2}$.  On the other hand, if $S_1 = S_0$ but $S_2 \neq S_0$, say, then $P_{S_1}^{S_1\cap S_2} = P_{S_1 \cap J^c}^{S_1 \cap J^c \cap S_2} = P_{S_2 \cap J^c}^{S_1 \cap J^c \cap S_2} = P_{S_2}^{S_1\cap S_2}$.  This proves the first part of the proposition.

For the second part, if $f_{\mathbb{S}-J} = (f_S:S \in \mathbb{S}^{-J}) \in \mathcal{G}_{\mathbb{S}^{-J}}^+$, then we can define $f'_{\mathbb{S}} = (f'_S:S \in \mathbb{S})$ by $f'_S := f_S$ for $S \in \mathbb{S} \setminus \{S_0\}$ and $f'_{S_0}(x_J,x_{S_0 \cap J^c}) := f_{S_0 \cap J^c}(x_{S_0 \cap J^c})$.  Then $f'_\mathbb{S} \in \mathcal{G}_{\mathbb{S}}^+$, and
\begin{align*}
R(P_\mathbb{S},f'_{\mathbb{S}}) &= -\frac{1}{|\mathbb{S}|}\sum_{S \in \mathbb{S}} \int_{\mathcal{X}_S} f'_S(x_S) \, dP_S(x_S) \\
&= -\frac{1}{|\mathbb{S}^{-J}|}\sum_{S \in \mathbb{S} \setminus \{S_0\}} \int_{\mathcal{X}_S} f_S(x_S) \, dP_S(x_S) - \frac{1}{|\mathbb{S}^{-J}|}\int_{\mathcal{X}_{S_0}} f_{S_0\cap J^c}(x_{S_0 \cap J^c}) \, dP_{S_0}(x_{S_0}) \\
&= -\frac{1}{|\mathbb{S}^{-J}|}\sum_{S \in \mathbb{S} \setminus \{S_0\}} \int_{\mathcal{X}_S} \! f_S(x_S) \, dP_S(x_S) - \frac{1}{|\mathbb{S}^{-J}|}\int_{\mathcal{X}_{S_0\cap J^c}} \! \! f_{S_0\cap J^c}(x_{S_0 \cap J^c}) \, dP_{S_0 \cap J^c}(x_{S_0 \cap J^c}) \\
&= R(P_\mathbb{S}^{-J},f_{\mathbb{S}^{-J}}).
\end{align*}
It follows that $R(P_\mathbb{S}) \geq R(P_{\mathbb{S}}^{-J})$.  Conversely, suppose that $f_{\mathbb{S}} \in \mathcal{G}_{\mathbb{S}}^+$ is such that $R(P_\mathbb{S},f_{\mathbb{S}}) = R(P_\mathbb{S})$.  Now define $\tilde{f}_{\mathbb{S}} = (\tilde{f}_S:S \in \mathbb{S}^{-J})$ by $\tilde{f}_S := f_S$ for $S \in \mathbb{S} \setminus \{S_0\}$ and $\tilde{f}_{S_0 \cap J^c}(x_{S_0 \cap J^c}) := \inf_{x'_J \in \mathcal{X}_J} f_{S_0}(x'_J,x_{S_0 \cap J^c})$.  Then $\tilde{f}_\mathbb{S} \geq -1$.  Moreover, each $\tilde{f}_S$ is upper semi-continuous: this follows when $S \in \mathbb{S} \setminus \{S_0\}$ because $f_S$ is then upper semi-continuous; on the other hand, for any $x_J' \in \mathcal{X}_J$,
\[
\limsup_{x_{n,S_0 \cap J^c} \rightarrow x_{S_0 \cap J^c}} \tilde{f}_{S_0 \cap J^c}(x_{n,S_0 \cap J^c}) \leq \limsup_{x_{n,S_0 \cap J^c} \rightarrow x_{S_0 \cap J^c}} f_{S_0}(x_J',x_{n,S_0 \cap J^c}) \leq f_{S_0}(x_J',x_{S_0 \cap J^c}).
\]
We deduce that $\limsup_{x_{n,S_0 \cap J^c} \rightarrow x_{S_0 \cap J^c}} \tilde{f}_{S_0 \cap J^c}(x_{n,S_0 \cap J^c}) \leq \tilde{f}_{S_0 \cap J^c}(x_{S_0 \cap J^c})$, as required.  Finally, writing $\mathcal{X}_{-J} := \prod_{j \in [d] \setminus J} \mathcal{X}_j$, we have
\begin{align*}
\inf_{x_{-J} \in \mathcal{X}_{-J}} \sum_{S \in \mathbb{S}^{-J}} \tilde{f}_S(x_S) &= \inf_{x_{-J} \in \mathcal{X}_{-J}}\biggl\{\sum_{S \in \mathbb{S} \setminus \{S_0\}} \tilde{f}_S(x_S) + \tilde{f}_{S_0 \cap J^c}(x_{S_0 \cap J^c})\biggr\} \\
&= \inf_{x_{-J} \in \mathcal{X}_{-J}}  \biggl\{\sum_{S \in \mathbb{S} \setminus \{S_0\}} f_S(x_S) + \inf_{x_J \in \mathcal{X}_J}  f_{S_0}(x_J,x_{S_0 \cap J^c})\biggr\} \\
&= \inf_{x \in \mathcal{X}} \sum_{S \in \mathbb{S}} f_S(x_S) \geq 0.
\end{align*}
Thus $\tilde{f}_\mathbb{S} \in \mathcal{G}_{\mathbb{S}^{-J}}^+$, and $R(P_\mathbb{S}^{-J}) \geq R(P_{\mathbb{S}}^{-J},\tilde{f}_\mathbb{S}) \geq R(P_{\mathbb{S}},f_\mathbb{S}) = R(P_\mathbb{S})$.
\end{proof}

\begin{proof}[Proof of Proposition~\ref{Prop:EveryPattern}]
Any $f_\mathbb{S} \in \mathcal{G}_\mathbb{S}^+$ can be decomposed as $(f_{S|x_J}:x_J \in \mathcal{X}_J,S \in \mathbb{S})$, where $f_{S|x_J} \in \mathcal{G}_{S \cap J^c}$ is defined by $f_{S|x_J}(x_{S \cap J^c}) := f_S(x_J,x_{S \cap J^c})$.  We write $f_{\mathbb{S}|x_J}:= (f_{S|x_J}:S \in \mathbb{S})$.  Moreover, for each $x_J \in \mathcal{X}_J$,
\begin{align*}
\inf_{x_{S \cap J^c} \in \mathcal{X}_{S \cap J^c}} \sum_{S \in \mathbb{S}} f_{S|x_J}(x_{S \cap J^c}) &= \inf_{x_{S \cap J^c} \in \mathcal{X}_{S \cap J^c}} \sum_{S \in \mathbb{S}} f_S(x_J,x_{S \cap J^c}) \\
&\geq \inf_{x_J' \in \mathcal{X}_J,x_{S \cap J^c} \in \mathcal{X}_{S \cap J^c}} \sum_{S \in \mathbb{S}} f_S(x_J',x_{S \cap J^c}) \geq 0,
\end{align*}
so $f_{\mathbb{S}|x_J} \in \mathcal{G}_{\mathbb{S}^{-J}}^+$ for each $x_J \in \mathcal{X}_J$.  It follows that if $\epsilon > 0$, and if $f_\mathbb{S} \in \mathcal{G}_\mathbb{S}^+$ is such that $R(P_\mathbb{S},f_{\mathbb{S}}) \geq R(P_\mathbb{S}) - \epsilon$, then
\begin{align*}
R(P_\mathbb{S}) \leq R(P_\mathbb{S},f_{\mathbb{S}}) + \epsilon &= -\frac{1}{|\mathbb{S}|}\sum_{S \in \mathbb{S}} \int_{\mathcal{X}_S} f_S(x_S) \, dP_S(x_S) + \epsilon \\
&= -\frac{1}{|\mathbb{S}|}\sum_{S \in \mathbb{S}}\int_{\mathcal{X}_J} \int_{\mathcal{X}_{S \cap J^c}} f_{S|x_J}(x_{S \cap J^c}) \, dP_{S|x_J}(x_{S \cap J^c}) \, dP^J(x_J) + \epsilon \\
&= \int_{\mathcal{X}_J} R(P_{\mathbb{S}|x_J},f_{\mathbb{S}|x_J}) \, dP^J(x_J) + \epsilon.
\end{align*}
Since $\epsilon > 0$ was arbitrary, the desired inequality~\eqref{Eq:ConditioningInequality} follows.

Now consider the discrete case where $\mathcal{X}_j = [m_j]$ for some $m_1,\ldots,m_d \in \mathbb{N} \cup \{\infty\}$.  Given any $(f_{\mathbb{S}|x_J}:x_J \in \mathcal{X}_J)$ with  $f_{\mathbb{S}|x_J} \in \mathcal{G}_{\mathbb{S}^{-J}}^+$ for each $x_J \in \mathcal{X}_J$, we can define $f_\mathbb{S} = (f_S:S \in \mathbb{S})$ by $f_S(x_S) := f_{S|x_J}(x_{S \cap J^c})$.  Then $f_S \geq -1$ for all $S \in \mathbb{S}$, each $f_S$ is upper semi-continuous, and
\[
\min_{x \in \mathcal{X}} \sum_{S \in \mathbb{S}} f_S(x_S) = \min_{x_J \in \mathcal{X}_J} \min_{x_{S \cap J^c} \in \mathcal{X}_{S \cap J^c}} \sum_{S \in \mathbb{S}} f_{S|x_J}(x_{S \cap J^c}) \geq 0.
\]
Hence $f_\mathbb{S} \in \mathcal{G}_\mathbb{S}^+$.  Moreover, in this discrete case, maximising $R(P_{\mathbb{S}|x_J},\cdot)$ over $\mathcal{G}_{\mathbb{S}^{-J}}^+$ may be regarded as maximising a continuous function over a closed subset of $[-1,|\mathbb{S}|-1]^{\mathcal{X}_{\mathbb{S}^{-J}}}$ equipped with product topology, and this is a compact set by Tychanov's theorem \citep[e.g.][Theorem~4.42]{folland1999real}.   We may therefore assume that there exists $f_{\mathbb{S}|x_J} \in \mathcal{G}_{\mathbb{S}^{-J}}^+$ such that $R(P_{\mathbb{S}|x_J},f_{\mathbb{S}|x_J}) = R(P_{\mathbb{S}|x_J})$.  Then
\begin{align*}
    R(P_\mathbb{S}) \geq R(P_{\mathbb{S}},f_{\mathbb{S}}) &= -\frac{1}{|\mathbb{S}|} \sum_{S \in \mathbb{S}} \sum_{x_J \in \mathcal{X}_J} \sum_{x_{S \cap J^c \in \mathcal{X}_{S \cap J^c}}} f_S(x_J,x_{S \cap J^c}) p_S(x_J,x_{S \cap J^c}) \\
    &= \sum_{x_J \in \mathcal{X}_J} \biggl\{-\frac{1}{|\mathbb{S}|} \sum_{S \in \mathbb{S}} \sum_{x_{S \cap J^c \in \mathcal{X}_{S \cap J^c}}} f_{S|x_J}(x_{S \cap J^c}) p_{S|x_J}(x_{S \cap J^c})\biggr\}p^J(x_J) \\ 
    &= \sum_{x_J \in \mathcal{X}_J} R(P_{\mathbb{S}|x_J},f_{\mathbb{S}|x_J})p^J(x_J) = \sum_{x_J \in \mathcal{X}_J} R(P_{\mathbb{S}|x_J})p^J(x_J),
\end{align*}
and the desired conclusion follows.
\end{proof}
\begin{proof}[Proof of Proposition~\ref{Prop:CutSet}]
We first establish the lower bound on $R(P_\mathbb{S})$. 
Suppose that $\epsilon \in [0,1]$ is such that $P_\mathbb{S} \in (1-\epsilon) \mathcal{P}_\mathbb{S}^0 + \epsilon \mathcal{P}_\mathbb{S}$.  Then we can find $Q_\mathbb{S}^0 \in \mathcal{P}_\mathbb{S}^0$ and $Q_\mathbb{S} \in \mathcal{P}_\mathbb{S}$ such that $P_\mathbb{S} = (1-\epsilon)Q_\mathbb{S}^0 + \epsilon Q_\mathbb{S}$.  But then $P_{\mathbb{S}_1} := (P_S:S \in \mathbb{S}_1)$ satisfies $P_{\mathbb{S}_1} = (1-\epsilon)Q_{\mathbb{S}_1}^0 + \epsilon Q_{\mathbb{S}_1}$, so $P_{\mathbb{S}_1} \in (1-\epsilon) \mathcal{P}_{\mathbb{S}_1}^0 + \epsilon \mathcal{P}_{\mathbb{S}_1}$.  Hence, by Theorem~\ref{Thm:DualRepresentation} we have $R(P_\mathbb{S}) \geq R(P_{\mathbb{S}_1})$. The same argument applies to show that $R(P_\mathbb{S}) \geq R(P_{\mathbb{S}_2})$, and the lower bound therefore follows.

We now turn to the upper bound.  For $k \in \{1,2\}$, let $I_k := \cup_{S \in \mathbb{S}_k} S$.  From~\eqref{Eq:RPSEquality}, for each $k \in \{1,2\}$ we can find $q_k \in [0,\infty)^{\mathcal{X}_{I_k}}$ that maximises $1_{\mathcal{X}_{I_k}}^T q$ over all $q \in [0,\infty)^{\mathcal{X}_{I_k}}$ that satisfy $\mathbb{A}^k q \leq p_{\mathbb{S}_k}$, where $\mathbb{A}^k := (\mathbb{A}_{(S,y_S),x}^k)_{(S,y_S) \in \mathcal{X}_{\mathbb{S}_k},x \in \mathcal{X}_{I_k}} \in \{0,1\}^{\mathcal{X}_{\mathbb{S}_k} \times \mathcal{X}_{I_k}}$ is given by
\[
    \mathbb{A}_{(S,y_S),x}^k := \mathbbm{1}_{\{x_S = y_S\}}.
\]
Define a measure $Q$ on $\mathcal{X}$ with mass function $q$ given by
\[
    q(x) := \left\{ \begin{array}{ll} \min\bigl\{ q_1^J(x_J), q_2^J(x_J)\bigr\} \cdot  \frac{q_1(x_{I_1})}{q_1^J(x_J)} \cdot  \frac{q_2(x_{I_2})}{q_2^J(x_J)} & \mbox{if $\min\bigl\{ q_1^J(x_J), q_2^J(x_J)\bigr\} > 0$} \\ 0 & \mbox{otherwise.} \end{array} \right.
\]
Then whenever $\min\{ q_1^J(x_J), q_2^J(x_J)\} > 0$, we have 
\begin{align*}
q^J(x_J) &= \sum_{x_{J^c \cap I_1} \in \mathcal{X}_{J^c \cap I_1}} \sum_{x_{J^c \cap I_2} \in \mathcal{X}_{J^c \cap I_2}} \min\{ q_1^J(x_J), q_2^J(x_J)\} \frac{q_1(x_{I_1})}{q_1^J(x_J)} \cdot  \frac{q_2(x_{I_2})}{q_2^J(x_J)} \\
&=    \min\{ q_1^J(x_J), q_2^J(x_J)\} \sum_{x_{J^c \cap I_1} \in \mathcal{X}_{J^c \cap I_1}} \frac{q_1(x_{I_1})}{q_1^J(x_J)} \cdot \sum_{x_{J^c \cap I_2} \in \mathcal{X}_{J^c \cap I_2}}  \frac{q_2(x_{I_2})}{q_2^J(x_J)} \\
&= \min\{ q_1^J(x_J), q_2^J(x_J) \} = \min\bigl\{ (\mathbb{A}^1q_1)_{(J,x_J)},(\mathbb{A}^2q_2)_{(J,x_J)}\bigr\}  \leq (p_\mathbb{S})_{(J,x_J)} = p_J(x_J).
\end{align*}
On the other hand, if $\min\{ q_1^J(x_J), q_2^J(x_J)\} = 0$, then $q^J(x_J) = 0 \leq p_S(x_S)$.  Further, whenever $q_k^J(x_J) > 0$, we have for $k \in \{1,2\}$ and any $S \in \mathbb{S}_k \setminus \{J\}$ that
\begin{align*}
    q^S(x_S) &= \sum_{x_{J \cap S^c} \in \mathcal{X}_{J \cap S^c}} \sum_{x_{J^c \cap I_k} \in \mathcal{X}_{J^c \cap I_k}} \min\{ q_1^J(x_J), q_2^J(x_J)\} \frac{q_k(x_{I_k})}{q_k^J(x_J)} \\
    &\leq \sum_{x_{J \cap S^c} \in \mathcal{X}_{J \cap S^c}} \sum_{x_{J^c \cap I_k} \in \mathcal{X}_{J^c \cap I_k}} q_k(x_{I_k}) = q_k^S(x_S) = (\mathbb{A}^kq_k)_{(S,x_S)} \leq (p_\mathbb{S})_{(S,x_S)} = p_S(x_S).
\end{align*}
Finally, if $q_k^J(x_J) = 0$, then $q^S(x_S) = 0 \leq p_S(x_S)$.  It follows that $\mathbb{A} q \leq p_\mathbb{S}$, where $\mathbb{A} := (\mathbb{A}_{(S,y_S),x})_{(S,y_S) \in \mathcal{X}_{\mathbb{S}},x \in \mathcal{X}} \in \{0,1\}^{\mathcal{X}_{\mathbb{S}} \times \mathcal{X}}$ is given by~\eqref{Eq:A}. Thus, from~\eqref{Eq:RPSEquality},
\begin{align*}
    R(P_\mathbb{S}) &\leq 1 - \sum_{x \in \mathcal{X}} q(x) = 1 - \sum_{x_J \in \mathcal{X}_J} \min\bigl\{ q_1^J(x_J), q_2^J(x_J) \bigr\} \\
    & = \sum_{x_J \in \mathcal{X}_J} \max\bigl\{ p_J(x_J) - q_1^J(x_J), p_J(x_J) - q_2^J(x_J) \bigr\} \\
    & \leq \sum_{x_J \in \mathcal{X}_J} \bigl\{ p_J(x_J) - q_1^J(x_J) + p_J(x_J) - q_2^J(x_J) \bigr\} \\
    & = 1 - \sum_{x_{I_1} \in \mathcal{X}_{I_1}} q_1(x_{I_1}) + 1 -\sum_{x_{I_2} \in \mathcal{X}_{I_2}} q_2(x_{I_2}) = R(P_{\mathbb{S}_1}) + R(P_{\mathbb{S}_2}),
\end{align*}
as required.
\end{proof}

\begin{proof}[Proof of Proposition~\ref{Prop:FullDim}]
Suppose that there exist $f_\mathbb{S} \in \mathbb{R}^{\mathcal{X}_\mathbb{S}}$ and $c \in \mathbb{R}$ such that $f_\mathbb{S}^T p_\mathbb{S} = c$ for all $p_\mathbb{S} \in \mathcal{P}_\mathbb{S}^0$. We will show that we must also have $f_\mathbb{S}^T p_\mathbb{S}=c$ for all $p_\mathbb{S} \in \mathcal{P}_\mathbb{S}^\mathrm{cons}$. In fact, by replacing $f_\mathbb{S}$ by $f_\mathbb{S} - (c/|\mathbb{S}|) 1_{\mathcal{X}_\mathbb{S}}$, we may assume without loss of generality that $c=0$. 

In this proof we emphasise the dependence of $\mathbb{A}$ on $\mathbb{S}$ by writing $\mathbb{A}_\mathbb{S}$.  Since $(\mathbb{A}_\mathbb{S}^T f_\mathbb{S})^T p =0$ for all $p \in [0,1]^\mathcal{X}$ with $1_\mathcal{X}^Tp = 1$, we must have that $\mathbb{A}_\mathbb{S}^T f_\mathbb{S} = 0$. We will use induction on $|\mathbb{S}|$ to deduce  that $f_\mathbb{S}^T p_\mathbb{S} = 0$ for all $p_\mathbb{S} \in \mathcal{P}_\mathbb{S}^\mathrm{cons}$. When $|\mathbb{S}|=1$, we have that if $\mathbb{A}_\mathbb{S}^T f_\mathbb{S} = 0$, then $f_\mathbb{S} = 0$, so $f_\mathbb{S}^T p_\mathbb{S} = 0$ for all $p_\mathbb{S} \in \mathcal{P}_\mathbb{S}^\mathrm{cons}$.  As  our induction hypothesis, suppose that whenever $|\mathbb{S}| \leq m$ and $f_\mathbb{S} \in \mathbb{R}^{\mathcal{X}_\mathbb{S}}$ satisfies $\mathbb{A}_\mathbb{S}^T f_\mathbb{S} = 0 $, we must have $f_\mathbb{S}^T p_\mathbb{S} = 0$ for all $p_\mathbb{S} \in \mathcal{P}_\mathbb{S}^\mathrm{cons}$.

Let $\mathbb{S}$ be given with $|\mathbb{S}|=m+1$, suppose that $f_\mathbb{S} \in \mathbb{R}^{\mathcal{X}_\mathbb{S}}$ satisfies $\mathbb{A}_\mathbb{S}^T f_\mathbb{S} = 0$, and let $p_\mathbb{S} \in \mathcal{P}_\mathbb{S}^\mathrm{cons}$ be arbitrary. Without loss of generality, we may assume that $\mathcal{X}_j = [m_j]$ for $j \in [d]$ for some $m_1,\ldots,m_d \in \mathbb{N}$.  Fixing $S_0 \in \mathbb{S}$, we have
\[
    f_{S_0}(x_{S_0}) = - \sum_{S \in \mathbb{S} \setminus \{S_0\}} f_S(x_{S_0 \cap S}, 1_{S_0^c \cap S})
\]
for all $x_{S_0} \in \mathcal{X}_{S_0}$, since $(\mathbb{A}_\mathbb{S}^T f_\mathbb{S})_{(x_{S_0},1_{[d] \setminus S_0})}=0$. Using the notational convention that $\sum_{x_{S_1 \cap S_2} \in \mathcal{X}_{S_1 \cap S_2}} p_{S_1}^{S_1 \cap S_2}(x_{S_1 \cap S_2}) = 1$ whenever $S_1 \cap S_2 = \emptyset$, we may therefore write
\begin{align}
\label{Eq:Reduction}
    f_\mathbb{S}^T p_\mathbb{S} &= \sum_{x_{S_0} \in \mathcal{X}_{S_0}} f_{S_0}(x_{S_0}) p_{S_0}(x_{S_0}) + \sum_{S \in \mathbb{S} \setminus \{S_0\}} \sum_{x_S \in \mathcal{X}_S} f_S(x_S) p_S(x_S) \nonumber \\
    & = \sum_{S \in \mathbb{S} \setminus \{S_0\}} \biggl\{ \sum_{x_S \in \mathcal{X}_S} f_S(x_S) p_S(x_S) - \sum_{x_{S_0 \cap S} \in \mathcal{X}_{S_0 \cap S}} f_S(x_{S_0 \cap S}, 1_{S_0^c \cap S}) p_{S_0}^{S_0 \cap S}(x_{S_0 \cap S}) \biggr\} \nonumber\\
    & = \sum_{S \in \mathbb{S} \setminus \{S_0\}} \biggl\{ \sum_{x_S \in \mathcal{X}_S} f_S(x_S) p_S(x_S) - \sum_{x_{S_0 \cap S} \in \mathcal{X}_{S_0 \cap S}} f_S(x_{S_0 \cap S}, 1_{S_0^c \cap S}) p_{S}^{S_0 \cap S}(x_{S_0 \cap S}) \biggr\} \nonumber\\
    & = \sum_{S \in \mathbb{S} \setminus \{S_0\}} \sum_{x_S \in \mathcal{X}_S} p_S(x_S) \bigl\{ f_S(x_S) - f_S(x_{S_0 \cap S}, 1_{S_0^c \cap S}) \bigr\} = (\bar{f}_{\mathbb{S} \setminus \{S_0\}})^T p_{\mathbb{S} \setminus \{S_0\}},
\end{align}
where we define $\bar{f}_{\mathbb{S} \setminus \{S_0\}} \in \mathbb{R}^{\mathcal{X}_{\mathbb{S} \setminus \{S_0\}}}$ by $\bar{f}_S(x_S) := f_S(x_S) - f_S(x_{S_0 \cap S}, 1_{S_0^c \cap S})$ for $S \in \mathbb{S} \setminus \{S_0\}$ and $x_S \in \mathcal{X}_S$, and where $p_{\mathbb{S} \setminus \{S_0\}} := (p_S:S \in \mathbb{S} \setminus \{S_0\})$. For any $x \in \mathcal{X}$, we have
\begin{align*}
    &(\mathbb{A}_{\mathbb{S} \setminus \{S_0\}}^T \bar{f}_{\mathbb{S} \setminus \{S_0\}})_x =  \sum_{S \in \mathbb{S} \setminus \{S_0\}} \bar{f}_S(x_S) = \sum_{S \in \mathbb{S} \setminus \{S_0\}} f_S(x_S) - \sum_{S \in \mathbb{S} \setminus \{S_0\}} f_S(x_{S_0 \cap S}, 1_{S_0^c \cap S}) \\
    &= (\mathbb{A}_\mathbb{S}^T f_\mathbb{S})_x - f_{S_0}(x_{S_0}) - \bigl\{ (\mathbb{A}_\mathbb{S}^T f_\mathbb{S})_{(x_{S_0},1_{[d] \setminus \{S_0\}})} - f_{S_0}(x_{S_0}) \bigr\} = f_{S_0}(x_{S_0}) - f_{S_0}(x_{S_0}) = 0.
\end{align*}
Since $p_{\mathbb{S} \setminus \{S_0\}}$ satisfies the consistency constraints associated with $\mathbb{S}$, we see by $\eqref{Eq:Reduction}$ and our induction hypothesis that
\[
    f_\mathbb{S}^T p_\mathbb{S} = (\bar{f}_{\mathbb{S} \setminus \{S_0\}})^T p_{\mathbb{S} \setminus \{S_0\}}=0,
\]
as required.
\end{proof}

\begin{prop}
\label{Prop:4dexampleAnalytic}
Suppose that $\mathbb{S}=\bigl\{\{1,2\},\{2,3\},\{3,4\},\{1,4\}\bigr\}$, $\mathcal{X}_1=[r]$ for some $r \in \mathbb{N}$, and $\mathcal{X}_2=\mathcal{X}_3=\mathcal{X}_4=[2]$. Then
\begin{equation}
\label{Eq:ChainStatementGeneral}
    R(P_\mathbb{S}) = 2 \max_{k, \ell \in [2]} \biggl\{ p_{\bullet \bullet k \ell } - p_{\bullet 2 k \bullet } - \sum_{i=1}^r \min(p_{i1\bullet \bullet }, p_{i \bullet \bullet \ell }) \biggr\}_+.
\end{equation}
\end{prop}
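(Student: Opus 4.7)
I would prove~\eqref{Eq:ChainStatementGeneral} by establishing matching lower and upper bounds, in the spirit of Theorem~\ref{Prop:rs2example}.  Writing $B := [r] \setminus A$ and using the identity $\sum_{i=1}^r \min(a_i, b_i) = \min_{A \subseteq [r]}\bigl(\sum_{i \in A} a_i + \sum_{i \in B} b_i\bigr)$, the claim is equivalent to
\[
R(P_\mathbb{S}) \;=\; 2 \max_{k, \ell \in [2],\, A \subseteq [r]}\bigl(p_{\bullet\bullet k\ell} - p_{\bullet 2 k\bullet} - p_{A1\bullet\bullet} - p_{B\bullet\bullet\ell}\bigr)_+.
\]

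For the lower bound, I would use an aggregation argument that reduces to the $r=2$ case of Example~\ref{Prop:4dexamples}(i).  For each $A \subseteq [r]$, define the aggregated binary 4-cycle distribution $\tilde P_\mathbb{S}^A$ on $[2]^4$ by merging the levels of variable $1$ according to $\{A, B\}$: set $\tilde P_{\{1,2\}}(1, x_2) := p_{A x_2 \bullet\bullet}$ and $\tilde P_{\{1,2\}}(2, x_2) := p_{B x_2 \bullet\bullet}$, analogously for $\tilde P_{\{1,4\}}$, and leave $\tilde P_{\{2,3\}} := P_{\{2,3\}}$, $\tilde P_{\{3,4\}} := P_{\{3,4\}}$ unchanged.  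Since the aggregation map pushes forward compatible couplings to compatible couplings, any $\epsilon$-decomposition of $P_\mathbb{S}$ descends to one of $\tilde P_\mathbb{S}^A$, and Theorem~\ref{Thm:DualRepresentation} yields $R(\tilde P_\mathbb{S}^A) \leq R(P_\mathbb{S})$.  Specializing Example~\ref{Prop:4dexamples}(i) to $\tilde P_\mathbb{S}^A$ with the choice $A = A^*(\ell_0) := \{i \in [r]: p_{i1\bullet\bullet} \leq p_{i\bullet\bullet\ell_0}\}$ and evaluating at $\ell = \ell_0$, the two binary $\min$ terms collapse to $p_{A^* 1\bullet\bullet} + p_{B^* \bullet\bullet\ell_0} = \sum_{i=1}^r \min(p_{i1\bullet\bullet}, p_{i\bullet\bullet\ell_0})$, and taking a further maximum over $k, \ell_0 \in [2]$ gives the desired lower bound on $R(P_\mathbb{S})$.

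For the upper bound, I would emulate the primal-flow argument in the proof of Theorem~\ref{Prop:rs2example}, starting from the representation
\[
1 - R(P_\mathbb{S}) \;=\; \max\bigl\{1_\mathcal{X}^T p : p \in [0, \infty)^\mathcal{X},\, \mathbb{A}p \leq p_\mathbb{S}\bigr\}
\]
from~\eqref{Eq:RPSEquality}, and construct for each optimal $(k, \ell, A)$ an explicit feasible $p \in [0,\infty)^\mathcal{X}$ whose total mass $1_\mathcal{X}^T p$ attains $1 - 2(p_{\bullet\bullet k\ell} - p_{\bullet 2 k\bullet} - p_{A1\bullet\bullet} - p_{B\bullet\bullet\ell})_+$.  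The construction models $p$ as a flow through a four-layer directed network mirroring the cycle $1 - 2 - 3 - 4 - 1$, with edge capacities given by the pairwise marginals; the saturation of the marginals at each layer is verified by a Gale--Hall-type bipartite matching argument using inequalities guaranteed by the optimality of the chosen $(k, \ell, A)$.

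The main obstacle will be in the upper bound.  Because the 4-cycle pattern admits no cut set in the sense of Proposition~\ref{Prop:CutSet}, the problem cannot be decomposed into simpler subproblems, and since there is no general max-flow/min-cut theorem for multi-commodity flows, a direct construction of a feasible primal flow tailored to the 4-cycle structure is required.  However, the binary structure of variables $2$, $3$, $4$ restricts the combinatorial complexity of the cut and the associated routing to a manageable number of cases, making the explicit Hall-type verification tractable along the lines of the corresponding step at the end of the proof of Theorem~\ref{Prop:rs2example}.
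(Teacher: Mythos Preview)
Your proposal is correct in outline, and both halves land on the same two-sided strategy as the paper (a dual witness for the lower bound and a primal feasible point in~\eqref{Eq:RPSEquality} for the upper bound), but the concrete execution differs in interesting ways.

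\textbf{Lower bound.} Your aggregation argument is genuinely different from the paper's and, in some respects, cleaner. The paper does not aggregate; instead it writes down, for the case $(k,\ell)=(1,1)$, an explicit $f_\mathbb{S}\in\mathcal{G}_\mathbb{S}^+$ taking values in $\{-1,3\}$ (with the sign pattern on $\{f_{i1\bullet\bullet},f_{i2\bullet\bullet},f_{i\bullet\bullet 1},f_{i\bullet\bullet 2}\}$ determined by whether $p_{i1\bullet\bullet}\le p_{i\bullet\bullet 1}$), verifies $f_\mathbb{S}\in\mathcal{G}_\mathbb{S}^+$, and computes $R(P_\mathbb{S},f_\mathbb{S})$ directly to obtain the term with $(k,\ell)=(1,1)$; the other three $(k,\ell)$ cases follow by symmetry. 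Your route instead pushes forward along the coarsening $[r]\to\{A,B\}$, observes that this can only decrease $R$ via the dual representation of Theorem~\ref{Thm:DualRepresentation}, and then invokes the $r=2$ formula from Example~\ref{Prop:4dexamples}(i). Your choice $A^*(\ell_0)=\{i:p_{i1\bullet\bullet}\le p_{i\bullet\bullet\ell_0}\}$ is exactly what makes the two binary minima collapse, and the argument is valid. The trade-off is that you are leaning on Example~\ref{Prop:4dexamples}(i), which in the paper is established computationally, whereas the paper's proof of Proposition~\ref{Prop:4dexampleAnalytic} is self-contained and analytic for all~$r$ (including $r=2$).

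\textbf{Upper bound.} Here the paper is considerably more direct than your sketch suggests. Rather than setting up a four-layer flow network and appealing to Gale--Hall feasibility, it simply writes down, in the case $(k,\ell)=(1,1)$ with $A=\{i:p_{i1\bullet\bullet}\le p_{i\bullet\bullet 1}\}$ and under the assumption that the corresponding bracket is nonnegative, explicit closed-form values $q_{ijk\ell}$ (eight per $i$, with $q_{i121}=q_{i212}=0$ and the others proportional to simple marginal quantities), and then checks the twelve families of constraints $\mathbb{A}q\le p_\mathbb{S}$ one by one. Each verification reduces, after one or two lines, either to the nonnegativity assumption or to a consistency identity. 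Summing $1_\mathcal{X}^T q$ gives exactly $1-2\{p_{\bullet\bullet 11}-p_{\bullet 21\bullet}-\sum_i\min(p_{i1\bullet\bullet},p_{i\bullet\bullet 1})\}$. No matching/flow theorem is invoked. Your proposed route would presumably arrive at the same $q$, but the paper's lesson is that for this particular cycle with three binary coordinates the feasible point can be guessed and verified by hand, bypassing the abstract machinery you anticipate needing.
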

\begin{proof}[Proof of Proposition~\ref{Prop:4dexampleAnalytic}]
We first prove that $R(P_{\mathbb{S}})$ is bounded below by the quantity on the right-hand side of~\eqref{Eq:ChainStatementGeneral}, before proving the corresponding upper bound.  First, we always have $R(P_\mathbb{S}) \geq 0$. Now, define $f_\mathbb{S} \in \mathcal{G}_\mathbb{S}$ by setting, for $i \in [r]$,
\[
    (f_{i1 \bullet \bullet}, f_{i 2 \bullet \bullet }, f_{i \bullet \bullet 1}, f_{i \bullet \bullet 2 } ) := \left\{ \begin{array}{ll} (3,-1,-1,3) & \mbox{if } p_{i 1 \bullet \bullet } \leq p_{i \bullet \bullet 1} \\ (-1,3,3,-1) & \mbox{otherwise} \end{array} \right.,
\]
$f_{\bullet \bullet 1 2}= f_{\bullet \bullet 2 1} = f_{\bullet 1 2 \bullet} = f_{\bullet 2 1 \bullet } = 3$ and $f_{\bullet \bullet 1 1}= f_{\bullet \bullet 2 2} = f_{\bullet 1 1 \bullet} = f_{\bullet 2 2 \bullet } = -1$. It is straightforward to check that $f_\mathbb{S} \in \mathcal{G}_\mathbb{S}^+$.  Now
\begin{align*}
    &R(P_\mathbb{S}, f_\mathbb{S}) = - \frac{1}{4} \sum_{i=1}^r \biggl( \sum_{j=1}^2 p_{ij \bullet \bullet} f_{ij \bullet \bullet} + \sum_{\ell=1}^2 p_{i \bullet \bullet \ell} f_{i \bullet \bullet \ell} \biggr) - \frac{1}{4} \sum_{j,k=1}^2 p_{\bullet jk \bullet} f_{\bullet jk \bullet} - \frac{1}{4} \sum_{k,\ell=1}^2 p_{\bullet \bullet k \ell} f_{\bullet \bullet k \ell} \\
    &= -\frac{1}{4} \sum_{i=1}^r \bigl\{ 3 \min(p_{i 1 \bullet \bullet}, p_{i \bullet \bullet 1}) - \max(p_{i 2 \bullet \bullet }, p_{i \bullet \bullet 2}) - \max(p_{i 1 \bullet \bullet }, p_{i \bullet \bullet 1}) + 3 \min(p_{i 2 \bullet \bullet }, p_{i \bullet \bullet 2}) \} \\
    & \hspace{30pt} - \frac{1}{4}\{ 3(p_{\bullet 1 2 \bullet } + p_{\bullet 2 1 \bullet }) - (p_{\bullet 11 \bullet } + p_{\bullet 22 \bullet }) \} - \frac{1}{4}\{ 3(p_{\bullet \bullet 1 2} + p_{\bullet \bullet 2 1}) - (p_{\bullet \bullet 11} + p_{\bullet \bullet 22}) \} \\
    & = - \frac{1}{4} \sum_{i=1}^r \bigl\{ 4 \min(p_{i 1 \bullet \bullet}, p_{i \bullet \bullet 1}) - 4 \max(p_{i 1 \bullet \bullet}, p_{i \bullet \bullet 1}) + 2 p_{i \bullet \bullet \bullet } \bigr\} \\
    & \hspace{30pt}- \frac{1}{4} \bigl\{ 3(2 p_{\bullet 2 1 \bullet } - p_{\bullet 2 \bullet \bullet } - p_{\bullet \bullet 1 \bullet } + p_{\bullet \bullet \bullet \bullet }) - (p_{\bullet \bullet 1 \bullet } - 2p_{\bullet 2 1 \bullet } + p_{\bullet 2 \bullet \bullet } ) \bigr\} \\
    &  \hspace{30pt} - \frac{1}{4}  \bigl\{  3 (p_{\bullet \bullet 1 \bullet } - 2p_{\bullet \bullet 11} + p_{\bullet \bullet \bullet 1 }) - (2 p_{\bullet \bullet 11} - p_{\bullet \bullet 1 \bullet } - p_{\bullet \bullet \bullet 1} + p_{\bullet \bullet \bullet \bullet }) \bigr\} \\
    & = - \frac{1}{4} \sum_{i=1}^r \{ 8 \min(p_{i 1 \bullet \bullet}, p_{i \bullet \bullet 1}) - 4p_{i1 \bullet \bullet } - 4p_{i \bullet \bullet 1} + 2 p_{i \bullet \bullet \bullet } \}  \\
    & \hspace{30pt} - \frac{1}{4} \bigl( 8 p_{\bullet 2 1 \bullet } - 4 p_{\bullet 2 \bullet \bullet } -4 p_{\bullet \bullet 1 \bullet } + 3p_{\bullet \bullet \bullet \bullet } \bigr) - \frac{1}{4} \bigl( -8 p_{\bullet \bullet 1 1} +4 p_{\bullet \bullet 1 \bullet } + 4 p_{\bullet \bullet \bullet 1} - p_{\bullet \bullet \bullet \bullet } \bigr) \\
    & = 2 \biggl\{ p_{\bullet \bullet 1 1} - p_{\bullet 21 \bullet } - \sum_{i=1}^r \min(p_{i1\bullet \bullet }, p_{i \bullet \bullet 1}) \biggr\}.
\end{align*}
Since $R(P_\mathbb{S}) \geq R(P_{\mathbb{S}},f_{\mathbb{S}})$, this completes the lower bound in the case that $(k,\ell)=(1,1)$ is the maximiser in~\eqref{Eq:ChainStatementGeneral}.  The other three cases follow by almost identical arguments by choosing different $f_\mathbb{S} \in \mathcal{G}_{\mathbb{S}}^+$ appropriately. We now turn to the upper bound, which we will prove by using the dual formulation
\[
    1 - R(P_\mathbb{S}) = \max \biggl\{ \sum_{i=1}^r \sum_{j,k,\ell =1}^2 q_{ijk \ell} : q \in [0,\infty)^\mathcal{X}, \mathbb{A}q \leq p_\mathbb{S} \biggr\}.
\]
Write $A:=\{ i \in [r] : p_{i1\bullet \bullet } \leq p_{i \bullet \bullet 1} \}$ and suppose that
\begin{equation}
\label{Eq:Incompatible}
    p_{\bullet \bullet 1 1} - p_{\bullet 2 1 \bullet } - p_{A 1 \bullet \bullet } - p_{A^c \bullet \bullet 1} \geq 0,
\end{equation}
where we note that an alternative expression for the left-hand side of~\eqref{Eq:Incompatible} is given by $p_{\bullet \bullet 1 1} - p_{\bullet 2 1 \bullet } - \sum_{i=1}^r \min(p_{i1\bullet \bullet }, p_{i \bullet \bullet 1})$.  For $i \in [r]$, consider the choices
\begin{alignat*}{5}
q_{i111} &&= \min(p_{i1 \bullet \bullet }, p_{i \bullet \bullet 1}), \ \ q_{i112} &&= \frac{(p_{i 1 \bullet \bullet } - p_{i \bullet \bullet 1})_+}{p_{A^c 1 \bullet \bullet} - p_{A^c \bullet \bullet 1}}p_{\bullet \bullet 1 2}, \ \  q_{i121}&&=0, \ \ q_{i122} &&= \frac{(p_{i 1 \bullet \bullet } - p_{i \bullet \bullet 1})_+}{p_{A^c 1 \bullet \bullet} - p_{A^c \bullet \bullet 1}}p_{\bullet 1 2 \bullet }, \\
q_{i222} &&= \min(p_{i\bullet \bullet 2}, p_{i2 \bullet \bullet }), \ \ q_{i211}&&=\frac{(p_{i \bullet \bullet 1} - p_{i 1 \bullet \bullet })_+}{p_{A \bullet \bullet 1} - p_{A 1 \bullet \bullet}}p_{\bullet 21 \bullet }, \ \ q_{i212}&&=0, \ \  q_{i221}&&=\frac{(p_{i \bullet \bullet 1} - p_{i1 \bullet \bullet})_+}{p_{A \bullet \bullet 1} - p_{A 1 \bullet \bullet}}p_{\bullet \bullet 2 1 },
\end{alignat*}
where we interpret $q_{i211} = q_{1221} = 0$ if $p_{A \bullet \bullet 1} = p_{A1\bullet \bullet}$.  It is clear that $q \in [0,\infty)^\mathcal{X}$, and we now check that $\mathbb{A}q \leq p_\mathbb{S}$. First,
\begin{align*}
    \sum_{k,\ell=1}^2 q_{i1 k \ell} &= \min(p_{i1\bullet \bullet }, p_{i \bullet \bullet 1}) + \frac{(p_{i 1 \bullet \bullet } - p_{i \bullet \bullet 1})_+}{p_{A^c 1 \bullet \bullet} - p_{A^c \bullet \bullet 1}}(p_{\bullet \bullet 1 2} + p_{\bullet 1 2 \bullet }) \\
    & = \min(p_{i1\bullet \bullet }, p_{i \bullet \bullet 1}) + \frac{(p_{i 1 \bullet \bullet } - p_{i \bullet \bullet 1})_+}{p_{A^c 1 \bullet \bullet} - p_{A^c \bullet \bullet 1}}(p_{\bullet 2 1 \bullet } - p_{\bullet \bullet 11} + p_{\bullet 1 \bullet \bullet }) \\
    & \leq \min(p_{i1\bullet \bullet }, p_{i \bullet \bullet 1}) +  (p_{i 1 \bullet \bullet } - p_{i \bullet \bullet 1})_+ = p_{i1\bullet \bullet },
\end{align*}
for each $i \in [r]$, where the inequality follows from~\eqref{Eq:Incompatible}. It is very similar to check that $\sum_{k,\ell=1}^2 q_{i2k\ell} \leq p_{i2 \bullet \bullet }$, that $\sum_{j,k=1}^2 q_{ijk1} \leq p_{i \bullet \bullet 1}$, and that $\sum_{j,k=1}^2 q_{ijk2} \leq p_{i \bullet \bullet 2}$ for each $i \in [r]$. Now
\begin{align*}
    \sum_{i=1}^r \sum_{\ell=1}^2 q_{i11\ell} &= \sum_{i=1}^r \biggl\{ \min(p_{i1\bullet \bullet }, p_{i \bullet \bullet 1}) + \frac{(p_{i 1 \bullet \bullet } - p_{i \bullet \bullet 1})_+}{p_{A^c 1 \bullet \bullet} - p_{A^c \bullet \bullet 1}}p_{\bullet \bullet 1 2} \biggr\} \\
    & = p_{A1\bullet \bullet} + p_{A^c \bullet \bullet 1} + p_{\bullet \bullet 12} \leq p_{\bullet \bullet 11} - p_{\bullet 21 \bullet } + p_{\bullet \bullet 1 2} = p_{\bullet 1 1 \bullet },
\end{align*}
where the inequality again follows from~\eqref{Eq:Incompatible}. It is very similar to check that $\sum_{i=1}^r \sum_{\ell=1}^2 q_{i22\ell} \leq p_{\bullet 22 \bullet}$, that $\sum_{i=1}^r \sum_{j=1}^2 q_{ij11} \leq p_{\bullet \bullet 11}$, and that $\sum_{i=1}^r \sum_{j=1}^2 q_{ij22} \leq p_{\bullet \bullet 22}$. Finally, it is straightforward to see using similar arguments that $\sum_{i=1}^r \sum_{\ell=1}^2 q_{i12 \ell} = p_{\bullet 1 2 \bullet }$, that $\sum_{i=1}^r \sum_{\ell=1}^2 q_{i 21 \ell}= p_{\bullet 21 \bullet}$, that $\sum_{i=1}^r \sum_{j=1}^2 q_{ij 21}= p_{\bullet \bullet 21}$, and that $\sum_{i=1}^r \sum_{j=1}^2 q_{ij 12}= p_{\bullet \bullet 12}$. Now that we have seen that $q$ satisfies the necessary constraints, we calculate that
\begin{align*}
    R(P_\mathbb{S}) &\leq 1 -\sum_{i=1}^r \sum_{j,k,\ell=1}^2 q_{ijk \ell} \\
    &= 1 -( p_{A1 \bullet \bullet } + p_{A^c \bullet \bullet 1} + p_{\bullet \bullet 12} + p_{\bullet 12 \bullet } + p_{\bullet 21 \bullet } + p_{\bullet \bullet 21} + p_{A^c 2 \bullet \bullet } + p_{A \bullet \bullet 2} )\\
    & = 2 \biggl\{ p_{\bullet \bullet 11} - p_{\bullet 21 \bullet } - \sum_{i=1}^r \min(p_{i1\bullet \bullet }, p_{i \bullet \bullet 1}) \biggr\}.
\end{align*}
This deals with the case where $(k,\ell)=(1,1)$ gives the maximiser in~\eqref{Eq:ChainStatementGeneral} and where the right-hand side of~\eqref{Eq:ChainStatementGeneral} is positive, as in this case~\eqref{Eq:Incompatible} must hold. The other cases  follow by very similar arguments, and this completes the proof.
\end{proof}

\begin{proof}[Proof of Theorem~\ref{Thm:ContinuousUpperBound}]
Given $S \in \mathbb{S}$ and $k = (k_1,\ldots,k_d) \in \mathcal{K}_h$, we can define a discretised version $Q_S$ of $P_S$ with mass function
\[
q_S(k_S) := P_S\biggl(\prod_{j \in {S \in [d_0]}} I_{h_j,k_j} \times \prod_{j \in S \cap ([d] \setminus [d_0])} \{k_j\}\biggr).
\]
Then $R_h(\hat{P}_\mathbb{S}) \stackrel{d}{=} R(\hat{Q}_\mathbb{S})$, where $(Y_{S,i}:S \in \mathbb{S},i \in [n_S])$ are independent with $Y_{S,i} \sim Q_S$ for $i \in [n_S]$, and $\hat{Q}_\mathbb{S}$ denotes their empirical distribution.  Moreover, if $R(P_\mathbb{S}) = 0$, then $P_\mathbb{S} \in \mathcal{P}_\mathbb{S}^0$ so there exists a distribution $P$ on $\mathcal{X}$ whose marginal distribution on $\mathcal{X}_S$ is $P_S$, for each $S \in \mathbb{S}$.  The discretised version $Q$ of $P$ with mass function
\begin{equation}
\label{Eq:PtoQ}
q(k) := P\biggl(\prod_{j=1}^{d_0} I_{h_j,k_j} \times \prod_{j=d_0+1}^d \{k_j\}\biggr)
\end{equation}
on $\mathcal{K}_h$ then satisfies the condition that its marginal on $(\mathcal{K}_h)_S$ is $q_S$, for each $S \in \mathbb{S}$.  It follows that $Q$ is compatible, i.e.~$R(Q_\mathbb{S}) = 0$.  The Type I error probability control follows from this and the first parts of Theorems~\ref{Prop:DiscreteTest1} and~\ref{Prop:DiscreteTest}.

For the second claim, given $\epsilon > 0$, find $f_\mathbb{S} \in \mathcal{G}_\mathbb{S}^+$ with $R(P_\mathbb{S},f_\mathbb{S}) \geq R(P_\mathbb{S}) - \epsilon$.  As in the proof of Proposition~\ref{Prop:L1Projection}, we may assume without loss of generality that $f_S \leq |\mathbb{S}|-1$ for all $S \in \mathbb{S}$.  Now define $f_{\mathbb{S},h} = (f_{S,h}:S \in \mathbb{S})$ by
\[
f_{S,h}(x_{S\cap [d_0]},x_{S \cap ([d] \setminus [d_0])}) := \frac{\int_{\prod_{j \in S \cap [d_0]} I_{h_j,k_j}} f_S(x_{S\cap [d_0]}',x_{S \cap ([d] \setminus [d_0])}) \, dx_{S\cap [d_0]}'}{\int_{\prod_{j \in S \cap [d_0]} I_{h_j,k_j}}  \, dx_{S\cap [d_0]}'}
\]
for $(x_{S\cap [d_0]},x_{S \cap ([d] \setminus [d_0])}) \in \mathcal{X}_S$ with $x_{S\cap [d_0]} \in \prod_{j \in S \cap [d_0]} I_{h_j,k_j}$. Each $f_{S,h}$ is then clearly piecewise constants on the appropriate sets, and is bounded below by $-1$. To check the other constraints of $\mathcal{G}_{\mathbb{S},h}^+$, let $x\in\mathcal{X}$ be given and let $U$ be uniformly distributed on the part of the partition of $[0,1)^{S \cap [d_0]}$ that contains $x_{[d_0]}$. We have that
\begin{align*}
    \sum_{S \in \mathbb{S}} f_{S,h}(x_S) = \sum_{S \in \mathbb{S}} \mathbb{E} \bigl\{ f_S(U_{S \cap [d_0]}, x_{S \cap ([d] \setminus [d_0])}) \bigr\} = \mathbb{E} \biggl\{ \sum_{S \in \mathbb{S}} f_S(U_{S \cap [d_0]}, x_{S \cap ([d] \setminus [d_0])}) \biggr\} \geq 0,
\end{align*}
and thus indeed $f_{\mathbb{S},h} \in \mathcal{G}_{\mathbb{S},h}^+$. Now,
\begin{align*}
R(P_\mathbb{S},f_{\mathbb{S},h}) &= -\frac{1}{|\mathbb{S}|}\sum_{S \in \mathbb{S}} \sum_{k \in (\mathcal{K}_h)_S} \frac{\int_{\prod_{j \in S \cap [d_0]} I_{h_j,k_j}} f_S(x_{S\cap [d_0]}',k_{S \cap ([d] \setminus [d_0])}) \, dx_{S\cap [d_0]}'}{\int_{\prod_{j \in S \cap [d_0]} I_{h_j,k_j}}  \, dx_{S\cap [d_0]}'} \\
&\hspace{6cm}\times P_S\biggl(\prod_{j \in S \cap [d_0]} I_{h_j,k_j} \times \prod_{j \in S \cap ([d] \setminus [d_0])} \{k_j\}\biggr) \\
& \geq -\frac{1}{|\mathbb{S}|}\sum_{S \in \mathbb{S}} \sum_{k \in (\mathcal{K}_h)_S} \int_{\prod_{j \in S \cap [d_0]} I_{h_j,k_j}} f_S(x_{S\cap [d_0]}',k_{S \cap ([d] \setminus [d_0])})  \, dP_S(x_{S\cap [d_0]}', k_{S \cap ([d] \setminus [d_0])}) \\
&\hspace{0.5cm} - \frac{L(|\mathbb{S}|-1)}{|\mathbb{S}|} \biggl( \sum_{j=1}^{d_0} h_j^{r_j} \biggr) \sum_{S \in \mathbb{S}} \sum_{k \in (\mathcal{K}_h)_S} p_S^{S \cap ([d]\setminus [d_0])}(k_{S \cap ([d] \setminus [d_0])}) \int_{\prod_{j \in S \cap [d_0]} I_{h_j,k_j}} dx_{S\cap [d_0]}' \\
& = R(P_\mathbb{S}, f_\mathbb{S}) - L(|\mathbb{S}|-1) \sum_{j=1}^{d_0} h_j^{r_j} \geq R(P_\mathbb{S}) - \epsilon - L(|\mathbb{S}|-1) \sum_{j=1}^{d_0} h_j^{r_j}. 
\end{align*}
Since $\epsilon > 0$ was arbitrary, we deduce that
\begin{equation}
\label{Eq:RhLowerBound}
R_h(P_{\mathbb{S}}) \geq R(P_\mathbb{S}) - L(|\mathbb{S}|-1) \sum_{j=1}^{d_0} h_j^{r_j}. 
\end{equation}
The completion of the argument is now very similar to the first part of the theorem: we define the discretised version $Q_S$ of $P_S$ via~\eqref{Eq:PtoQ}.  Note again that $R_h(\hat{P}_\mathbb{S}) \stackrel{d}{=} R(\hat{Q}_\mathbb{S})$, where $(Y_{S,i}:S \in \mathbb{S},i \in [n_S])$ are independent with $Y_{S,i} \sim Q_S$ for $i \in [n_S]$, and $\hat{Q}_\mathbb{S}$ denotes their empirical distribution.  Since $R(Q_\mathbb{S}) = R_h(P_\mathbb{S})$, the result follows from~\eqref{Eq:RhLowerBound} together with the second parts of Theorems~\ref{Prop:DiscreteTest1} and~\ref{Prop:DiscreteTest}.
\end{proof}

\noindent \textbf{Acknowledgements}: The first author was supported by Engineering and Physical Sciences Research Council (EPSRC) New Investigator Award EP/W016117/1.  The second author was supported by EPSRC grants EP/P031447/1 and EP/N031938/1, as well as European Research Council
Advanced Grant 101019498.

{
\bibliographystyle{custom}
\bibliography{bib}
}

\section{Glossary of topological definitions}
\label{Sec:Glossary}

A topological space $\mathcal{X}$ is said to be \emph{completely regular} if for every closed set $B \subseteq \mathcal{X}$ and and every $x_0 \in \mathcal{X} \setminus B$, there exists a bounded continuous function $f : \mathcal{X} \rightarrow \mathbb{R}$ such that $f(x_0) = 1$ and $f(x)= 0$ for all $x \in B$.  We say $\mathcal{X}$ is \emph{Hausdorff} if, given any distinct $x, y \in \mathcal{X}$, there exist open sets $U \subseteq \mathcal{X}$ containing $x$ and $V \subseteq \mathcal{X}$ such that $U \cap V = \emptyset$.  We say a subset of $\mathcal{X}$ is \emph{$\sigma$-compact} if it is countable union of compact sets.  Given a Borel subset $E$ of $\mathcal{X}$, we say a Borel measure $\mu$ on $\mathcal{X}$ is \emph{outer regular} on $E$ if
\[
\mu(E) = \inf\{\mu(U): U \supseteq E, U \text{ open}\}
\]
and \emph{inner regular} on $E$ if
\[
\mu(E) = \sup\{\mu(K): K \subseteq E, K \text{ compact}\}.
\]
We say $\mu$ is a \emph{Radon} measure if it is outer regular on all Borel sets, inner regular on all open sets, and finite on all compact sets.

If $\mathcal{T}$ is a topology on $\mathcal{X}$, a \emph{neighbourhood base} for $\mathcal{T}$ at $x \in \mathcal{X}$ is a family $\mathcal{N} \subseteq \mathcal{T}$ such that $x \in V$ for all $V \in \mathcal{N}$ and, whenever $U \in \mathcal{T}$ and $x \in U$, there exists $V \in \mathcal{N}$ such that $V \subseteq U$.  A \emph{base} for $\mathcal{T}$ is a family $\mathcal{B} \subseteq \mathcal{T}$ that contains a neighbourhood base for $\mathcal{T}$ at each $x \in \mathcal{X}$.  We say $\mathcal{X}$ is \emph{second countable} if it has a countable base.

\end{document}